\documentclass{jstpip}
\usepackage{amsmath,amsthm,amssymb}

\newcommand{\nexteqv}{\displaybreak[0]\\ &\iff}
\newcommand{\nexteq}{\displaybreak[0]\\ &=}
\newcommand{\nnexteq}{\nonumber\displaybreak[0]\\ &=}
\newtheorem{lem}{Lemma}[section]
\newtheorem{thm}[lem]{Theorem}

\theoremstyle{definition}
\newtheorem{rem}[lem]{Remark}
\newtheorem{dfn}[lem]{Definition}

\newtheorem{exam}[lem]{Example}

\newcommand{\F}{\mathbb{F}}
\newcommand{\Z}{\mathbb{Z}}
\newcommand{\hA}{\hat{A}}

\newcommand{\cQ}{\mathcal{Q}}

\newcommand{\cB}{\mathcal{B}}
\newcommand{\cC}{\mathcal{C}}
\newcommand{\cD}{\mathcal{D}}
\newcommand{\cE}{\mathcal{E}}
\newcommand{\cF}{\mathcal{F}}
\newcommand{\cG}{\mathcal{G}}

\newcommand{\cT}{\mathcal{T}}

\DeclareMathOperator{\Orb}{Orb}

\DeclareMathOperator{\SQS}{SQS}
\newcommand{\orb}[2]{\Orb_{#1}({#2})}
\newcommand{\orbt}[1]{[#1]}
\newcommand{\qbinom}[2]{\genfrac{[}{]}{0pt}{}{#1}{#2}}

\begin{document}

\title{Steiner Quadruple Systems with
Point-Regular Abelian Automorphism Groups}
\begin{authors}
  \author{Akihiro Munemasa}{Tohoku University}{Sendai, 980-8579 Japan}{munemasa@math.is.tohoku.ac.jp}
  \author{Masanori Sawa}{Nagoya University}{Nagoya, 464-8601 Japan}{sawa@math.cm.is.nagoya-u.ac.jp}
\end{authors}
\date{April 14th, 2010}{December 20th, 2010}

\begin{abstract}
In this paper
we present a graph theoretic construction of Steiner quadruple systems (SQS)
admitting abelian groups as point-regular automorphism groups.
The resulting SQS has an extra property which we call
$A$-reversibility, where $A$ is the underlying abelian group.
In particular, when $A$ is a $2$-group of exponent at most $4$,
it is shown that an $A$-reversible SQS always exists.
When the Sylow $2$-subgroup of $A$ is cyclic, we give
a necessary and sufficient condition for the existence of
an $A$-reversible SQS, which is a generalization of
a necessary and sufficient condition for the existence of
a dihedral SQS by Piotrowski (1985).
This enables one to construct $A$-reversible SQS for any
abelian group $A$ of order $v$ such that
for every prime divisor $p$ of $v$
there exists a dihedral $\SQS(2p)$.

\amsc{05E20, 05B05}

\keywords{Steiner system; combinatorial design; graph; finite group.}
\end{abstract}

\section{Introduction}
Let $t,k,\lambda$ and $v$ be positive integers such that $t < k < v$.
A $t$-$(v,k,\lambda)$ {\it design} is an ordered pair $\cD = (V,\cB)$
consisting of a set $V$ of 
$v$ points, and a collection $\cB$ of $k$-subsets (called blocks) of $V$
such that every 
$t$-subset of $V$ occurs in exactly $\lambda$ blocks.
In particular, $\cD$ is a {\it Steiner system} if $\lambda=1$.
A {\it Steiner quadruple system} (SQS) of order $v$,
denoted by $\SQS(v)$, is a $3$-$(v,4,1)$ design. 
A well known result by
\cite{HH}
states that
an $\SQS(v)$ exists if and only if $v\equiv 2$ or $4\pmod{6}$.

An {\it automorphism} of $\cD$ is a permutation $\xi$ on $V$ such that
$B^\xi \in \cB$ for each 
$B \in \cB$. The collection of all automorphisms of $\cD$ forms a group,
called the {\it full automorphism group}, and a subgroup of the full automorphism
group is an {\it automorphism group} 
of $\cD$. A finite group $A$ acting on $V$ induces a natural action on the
set $\binom{V}{k}$
of all $k$-subsets of $V$.  
In a classical method of constructing $t$-designs one chooses suitable 
$A$-orbits of $k$-subsets and 
obviously the resulting design admits $A$ as an automorphism group. 
It is well known 
that 
if $A$ is $t$-transitive on $V$, then there exists a $t$-design with $A$
as an automorphism group. Since
there are few $t$-transitive groups with $t\ge 3$, 
we wish to develop a method of constructing 
$t$-designs, which works for permutation groups
with low transitivity. 

In this paper, we take an abelian group $A$ of order $v$ as
the set of points, and construct $\SQS(v)$ whose set of blocks
has some extra property.
If $A$ is an abelian group, then $A$ acts on itself by translation. 
Let $\sigma$ be the involutory automorphism of $A$ defined by $a^\sigma = -a$. 
We regard $A$ as a permutation group acting on $A$ regularly,
and form the semidirect product $\hA=A\rtimes\langle
\sigma\rangle$. The group $\hA$ is a 
permutation group on $A$.
A subset $B$ of $A$ is said to be {\it symmetric} if 
$B = -B+x$ for some $x\in A$, or equivalently, the orbit of $B$ 
under $\hA$ coincides with
that of $B$ under $A$.
An $\SQS(v)$ on points $A$ is said to be {\it $A$-reversible} if 
every block is symmetric and the set of all blocks
is invariant under the action of $\hA$.
When $A$ is the cyclic group $\Z_v$, then an
$A$-reversible $\SQS(v)$ has been known as an
$S$-cyclic $\SQS(v)$.
It was
\cite{FF}
who
first proposed a graph theoretic construction of $S$-cyclic $\SQS(v)$.
Later, his construction was taken up again and refined by
\cite{EK4},
in which the notion of {\it the first K\"ohler graph of order $v$} was introduced. 
In the same paper, among many other things, K\"ohler proved
a fundamental theorem for $S$-cyclic $\SQS(v)$ such 
that the stabilizer of any quadruple under the action of $\Z_v$ equals the identity,
which states that for $v\equiv 2$ or $10\pmod{24}$, 
if the first K\"ohler graph of order $v$ has a $1$-factor, 
then there exists a $S$-cyclic $\SQS(v)$.
After K\"ohler's work,
some researchers have tried to construct $S$-cyclic $\SQS(v)$, 
on which some progress has been made, but far from settled in general. 
Without restriction of the stabilizers of quadruples, 
\cite{WP}[Satz 14.1]
proved a theorem stating that
there exists a $S$-cyclic $\SQS(v)$ if and only if 
$v\equiv 0\pmod{2}$, $v\not\equiv0\pmod{3}$, $v\not\equiv0\pmod{8}$,
$v\ge4$ and for any prime divisor $p$ of $v$ 
there exists a $S$-cyclic $\SQS(2p)$.
See~\cite{BE1,Feng,HS1,HS2,HS3} for more information on $S$-cyclic $\SQS(v)$. 

The main purpose of this paper is to generalize 
Piotrowski's theorem on $S$-cyclic $\SQS(v)$ to
$A$-reversible $\SQS(v)$.
In Section 2,
the concept of the {\it K\"ohler graph} is introduced
as a generalization of first K\"ohler graphs of cyclic groups.
In Section 3, the structure of K\"ohler graphs is investigated
with special emphasis on degree of vertex and connected component.
In Section 4,
the number of orbits of certain triples and quadruples for an abelian group
are counted.
In Section 5,
we assume that $A$ is an abelian group of order $v\equiv2$ or $4\pmod{6}$.
We show that the special classes of triples and quadruples discussed in Section 4 could be incorporated
into an $A$-reversible $\SQS(v)$, thereby reducing the
existence of an $A$-reversible $\SQS(v)$ to a graph theoretic
problem about the K\"ohler graph of $A$
in Section 6.
As an example,
an infinite family of $A$-reversible $\SQS(2^n)$ is given
for abelian groups $\Z_2^a\times\Z_4^b$.
Finally in Section 7,
we prove that for an abelian group $A$ whose Sylow $2$-subgroup is cyclic, 
the following statements are equivalent:
\begin{enumerate}
\item There exists an $A$-reversible $\SQS(v)$;
\item $v\equiv 0\pmod{2}$, $v\not\equiv0\pmod{3}$,
$v\not\equiv0\pmod{8}$, $v\ge4$ and for any prime
divisor $p$ of $v$
there exists a $S$-cyclic $\SQS(2p)$.
\end{enumerate}
This is a generalization of a
theorem of Piotrowski, reformulated by
\cite{HS1}[p.93].
In~\cite{HS2},
it is shown that a $S$-cyclic $\SQS(2p)$ exists for
any prime number $p\equiv53$ or $77\pmod{120}$ with $p<500000$.
Applying our theorem to this result shows that there exists an
$A$-reversible $\SQS(v)$
for any abelian group $A$ of order $v$ which is twice a
product of prime numbers $p$ with
$p\equiv53$ or $77\pmod{120}$
and $p<500000$. 

\setcounter{section}{1}
\section{The K\"ohler graph of an abelian group}
Throughout this section, we
let $A$ be an abelian group
of order $v$.
We regard $A$ as a permutation group acting on $A$ regularly,
and form the semidirect product $\hA=A\rtimes\langle
\sigma\rangle$, where $\sigma$ is the automorphism of
$A$ defined by $a^\sigma=-a$. The group $\hA$ is a 
permutation group on $A$.
For a subset $X$ of $A$, let $\orb{\hA}{X}$ denote the
$\hA$-orbit of $X$:
\[
\orb{\hA}{X}=\{X+a\mid a\in A\}\cup\{-X+a\mid a\in A\}.
\]
For distinct nonzero elements $a_1,\dots,a_t\in A$, 
we abbreviate
\[
\orb{\hA}{\{0,a_1,\dots,a_t\}}
\]
as $\orbt{a_1,\dots,a_t}$.
If $\{0,a,b\}\in\binom{A}{3}$, then
\begin{equation}\label{eq:T0}
\begin{split}
&\{X\mid \{0\}\cup X\in\orbt{a,b},\;0\notin X\}
\\ &=
\{\{a,b\},\{-a,b-a\},\{-b,a-b\},
\\ &\qquad
\{-a,-b\},\{a,a-b\},\{b,b-a\}\}.
\end{split}
\end{equation}
If $\{0,a,b,a+b\}\in\binom{A}{4}$, then
\begin{equation}\label{eq:B0a}
\begin{split}
&\{Y\mid \{0\}\cup Y\in\orbt{a,b,a+b},\;0\notin Y\}
\\ &=
\{\{a,b,a+b\},\{-a,b,-a+b\},
\\ &\qquad
\{a,-b,a-b\},\{-a,-b,-a-b\}\}.
\end{split}
\end{equation}
Let 
\begin{equation}
\begin{split}
\cT&=
\{ \orbt{a,b} \mid a,b\in A,\;a\neq\pm b,
\\ &\qquad
2a\notin\{0,b,2b\},\;2b\notin\{0,a,2a\}\},
\label{eq:defT}
\end{split}
\end{equation}
\begin{equation}
\begin{split}
\cE &= \{ \orbt{a,b,a+b} \mid a,b\in A,\;
0\notin\{2a,2b\},
\\ &\qquad
\{\pm a,\pm 2a\}\cap\{\pm b,\pm 2b\}=\emptyset\}.
\label{eq:defE}
\end{split}
\end{equation}

\begin{dfn}\label{dfn:K}
The {\em K\"ohler graph} of $A$ is the incidence structure
$\cG=(\cT,\cE)$, where $\cT,\cE$ are defined in 
(\ref{eq:defT}), (\ref{eq:defE}), respectively,
and $\orb{\hA}{T}\in\cT$ is incident with $\orb{\hA}{B}
\in\cE$ if $B\supset T'$ for some $T'\in\orb{\hA}{T}$.
\end{dfn}

\begin{lem}\label{lem:cE}
\begin{enumerate}
\item
If $\{0,a,b\}\in\binom{A}{3}$, then
$\orbt{a,b}\in\cT$ if and only if
\begin{equation}\label{eq:abT}
a\neq\pm b,\quad 
2a\notin\{0,b,2b\}\text{ and }
2b\notin\{0,a,2a\}.
\end{equation}
\item
If $\{0,a,b,a+b\}\in\binom{A}{4}$, then
$\orbt{a,b,a+b}\in\cE$ if and only if
\begin{equation}\label{eq:abE}
0\notin\{2a,2b\}\text{ and }
\{\pm a,\pm2a\}\cap\{\pm b,\pm2b\}=\emptyset.
\end{equation}
\end{enumerate}
\end{lem}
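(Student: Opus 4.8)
\emph{The plan.} In both parts the implication from the displayed inequalities to membership in $\cT$ (resp.\ $\cE$) is immediate from the definitions \eqref{eq:defT} and \eqref{eq:defE}, so the whole content lies in the converse: if \emph{some} representative of the orbit satisfies the inequalities, then the chosen pair $(a,b)$ does too. I would therefore reduce the lemma to the statement that each of \eqref{eq:abT} and \eqref{eq:abE} depends only on the $\hA$-orbit $\orbt{a,b}$ (resp.\ $\orbt{a,b,a+b}$), not on the representative. The representatives are listed explicitly in \eqref{eq:T0} and \eqref{eq:B0a}, but rather than transform the inequalities representative-by-representative I would recast each condition as a property of the underlying triple or quadruple that is manifestly invariant under translation and under the negation $\sigma$.

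\emph{Part (i).} I would reinterpret \eqref{eq:abT} as the conjunction of two geometric conditions on $\{0,a,b\}$: (A) it is \emph{not} a three-term arithmetic progression, and (B) none of its three pairwise differences $a,b,a-b$ is a nonzero element of order $2$. Unpacking (A) gives $a\neq -b$ (the case $0$ is the midpoint), $2a\neq b$ and $2b\neq a$ (the cases $a$ or $b$ is the midpoint), while (B) gives $2a\neq 0$, $2b\neq 0$, $2a\neq 2b$; together these are exactly the inequalities in \eqref{eq:abT}, the only further condition $a\neq b$ being supplied by $\{0,a,b\}\in\binom{A}{3}$. Both (A) and (B) are preserved by translation and by $\sigma$: being an arithmetic progression is a property of the set, and the collection of pairwise differences is translation-invariant and merely changes sign under $\sigma$, so membership in the set of order-$2$ elements is unaffected. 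Hence \eqref{eq:abT} holds for $(a,b)$ exactly when it holds for every representative in \eqref{eq:T0}, which yields the equivalence.

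\emph{Part (ii).} I would argue identically, rewriting \eqref{eq:abE} as: (A$'$) the quadruple $\{0,a,b,a+b\}$ is \emph{not} a four-term arithmetic progression, and (B$'$) none of its four difference values $a,b,a+b,a-b$ has order $2$. Here $2a\neq 0$, $2b\neq 0$, $2a\neq 2b$, $2a\neq -2b$ record (B$'$) (since $2(a\pm b)=0\iff 2a=\mp 2b$), while the remaining $a\neq\pm 2b$ and $2a\neq\pm b$ record (A$'$): a progression $\{0,d,2d,3d\}$ is the parallelogram with parameters $(d,2d)$, so the quadruple is a four-term progression precisely when $b=\pm 2a$ or $a=\pm 2b$. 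Invariance is now transparent — the representatives in \eqref{eq:B0a} are the sign changes $(\pm a,\pm b)$, each set $\{\pm a,\pm 2a\}$, $\{\pm b,\pm 2b\}$ is stable under negating its parameter, $2a\neq 0\iff 2(-a)\neq 0$, and the intersection condition is symmetric in the two sets — or one may simply invoke the $\hA$-invariance of (A$'$) and (B$'$) as in part (i). Thus $\orbt{a,b,a+b}\in\cE$ forces $(a,b)$ itself to satisfy \eqref{eq:abE}.

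\emph{The main obstacle.} The delicate step is pinning down the exact dictionary between the inequalities and the two geometric conditions, in particular identifying $a\neq\pm 2b$, $2a\neq\pm b$ with ``not a four-term progression''. One must also check that the equivalence survives the degenerate situation in which a difference of the configuration has order $2$ (equivalently, when the parallelogram pairing of the quadruple is not unique): in that case the order-$2$ clause fails, and since it is an $\hA$-invariant of the orbit it fails for \emph{every} representative simultaneously, so neither side of the asserted equivalence holds — consistently. Phrasing the conditions as orbit invariants, rather than verifying the transformed inequalities one representative at a time, is precisely what makes the argument uniform across all cases and avoids a case analysis of which representatives coincide.
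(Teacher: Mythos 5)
Your proposal is correct, but it takes a genuinely different route from the paper. The paper proves the ``only if'' directions by working directly with the orbit representatives: for (i) it reads off from (\ref{eq:T0}) that $\{a,b\}\in\{\pm\{c,d\},\pm\{c,c-d\},\pm\{d,d-c\}\}$, reduces by symmetry to the single nontrivial case $(a,b)=(c,c-d)$, and verifies by substitution that the hypotheses on $(c,d)$ transform into (\ref{eq:abT}) for $(a,b)$; for (ii) it argues via (\ref{eq:B0a}) and the reduction to $\{a,b\}=\{c,d\}$, which silently uses $2c\neq0\neq2d$ to rule out an alternative parallelogram pairing of the quadruple. You instead recast (\ref{eq:abT}) and (\ref{eq:abE}) as manifestly $\hA$-invariant properties of the underlying set --- not being an arithmetic progression, and having no pairwise difference of order $2$ --- so that independence of the representative is automatic and no case-by-case substitution is needed. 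Your dictionaries are accurate (for (i), the three inequalities $a\neq-b$, $2a\neq b$, $2b\neq a$ are exactly AP-freeness and $2a\neq0$, $2b\neq0$, $2a\neq2b$ exactly the order-$2$ clause; for (ii), the difference set of $\{0,a,b,a+b\}$ is $\{\pm a,\pm b,\pm(a+b),\pm(a-b)\}$ and the four-term-AP characterization $b=\pm2a$ or $a=\pm2b$ is correct), and you rightly flag the one delicate point, that the translation must hold for \emph{every} parallelogram parametrization of the quadruple; this works because both clauses depend only on the set and its difference set, and because when the pairing is non-unique the order-$2$ clause fails on both sides. The paper's computation stays closer to the definitions and needs no auxiliary notions; your version is more conceptual, explains \emph{why} the conditions are orbit invariants, and scales uniformly over all representatives at once.
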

\begin{proof}
(i)
The ``if'' part is trivial. To prove the ``only if'' part,
suppose that there exist
$c,d\in A$ such that $\orbt{a,b}=\orbt{c,d}$ and
\begin{equation}\label{eq:cdE1}
c\neq\pm d,\;
2c\notin\{0,d,2d\},\;2d\notin\{0,c,2c\}.
\end{equation}
By (\ref{eq:T0}), we have
$
\{a,b\}\in\{\pm\{c,d\},\pm\{c,c-d\},\pm\{d,d-c\}\}
$.
Thus, either (\ref{eq:abT}) holds, or
$\{a,b\}\in\{\pm\{c,c-d\},\pm\{d,d-c\}\}$ holds.
In the latter case, replacing $(c,d)$ by $(-c,-d)$,
$(d,c)$ or $(-d,-c)$ if necessary, we may assume
$(a,b)=(c,c-d)$. Then by (\ref{eq:cdE1}), we have
$
a\neq\pm(a-b),\;2a\notin\{0,a-b,2(a-b)\},\;
2(a-b)\notin\{0,a,2a\}
$, 
and (\ref{eq:abT}) holds also in this case.

(ii)
The ``if'' part is trivial. To prove the ``only if'' part,
suppose that there exist
$
c,d\in A
$
such that $\orbt{a,b,a+b}=\orbt{c,d,c+d}$ and
\begin{equation}\label{eq:cdE2}
0\notin\{2c,2d\},\;
\{\pm c,\pm2c\}\cap\{\pm d,\pm2d\}=\emptyset.
\end{equation}
By (\ref{eq:B0a}), we have
$
\{a,b,a+b\}\in\{ \pm\{c,d,c+d\}, \pm\{-c,d,-c+d\}\}
$.
Replacing $(c,d)$ by $(-c,-d)$, $(-c,d)$ or $(c,-d)$
if necessary, we may assume
$\{a,b,a+b\}=\{c,d,c+d\}$.
This implies
$\{a,b\}=\{c,d\}$,
and (\ref{eq:abE}) holds by (\ref{eq:cdE2}). 
\end{proof}

\begin{lem}\label{lem:onlyB}
Suppose $T\in\binom{B}{3}$ and $\orb{\hA} {B}\in\cE$. Then $B$ is
the only member of $\orb{\hA}{B}$ containing $T$.
\end{lem}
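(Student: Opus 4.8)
The plan is to first replace the pair $(B,T)$ by a canonical one and then reduce everything to a single difference count. Since both the orbit $\orb{\hA}{B}$ and the relation ``a member of the orbit contains $T$'' are preserved when $(B,T)$ is replaced by $(B^g,T^g)$ for $g\in\hA$, and since $\orb{\hA}{B}\in\cE$ means $\orb{\hA}{B}=\orbt{a,b,a+b}$ for some $a,b$ satisfying (\ref{eq:abE}), I may assume from the outset that $B=\{0,a,b,a+b\}$ with $a,b$ as in (\ref{eq:abE}). The key structural point is that this $B$ is symmetric: indeed $-B+(a+b)=B$, equivalently $-B=B-(a+b)$. Hence, by the equivalence noted in the introduction, the $\hA$-orbit of $B$ coincides with its $A$-orbit, so every member of $\orb{\hA}{B}$ is simply a translate $B+c$ with $c\in A$; in particular the reflections require no separate treatment.

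With this reduction, suppose $B+c$ is a member of the orbit containing $T$. Then $T\subseteq B\cap(B+c)$, so $|B\cap(B+c)|\ge3$. I would now invoke the elementary identity $|B\cap(B+c)|=\#\{(x,y)\in B\times B\mid x-y=c\}$, that is, $|B\cap(B+c)|$ is the multiplicity of $c$ in the multiset of differences of $B$. A direct listing of the twelve ordered differences $x-y$ with $x\neq y$ in $B=\{0,a,b,a+b\}$ shows that $\pm a$ and $\pm b$ each occur with multiplicity $2$, while $\pm(a+b)$ and $\pm(a-b)$ each occur with multiplicity $1$. Thus the lemma reduces to showing that no nonzero element of $A$ is a difference of multiplicity $\ge3$; granting this, $|B\cap(B+c)|\ge3$ forces $c=0$ and hence $B+c=B$, as required.

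The only real content, and the step I expect to be most delicate, is to confirm that the two difference values of multiplicity $2$, namely $\pm a$ and $\pm b$, are pairwise distinct and distinct from each of the six remaining values; any failure would merge a multiplicity-$2$ value into a total multiplicity $\ge3$. Each such potential coincidence rearranges to one of the relations $2a=0$, $2b=0$, $a=\pm b$, $a=\pm2b$, $b=\pm2a$, and every one of these is precisely forbidden by the hypotheses $0\notin\{2a,2b\}$ and $\{\pm a,\pm2a\}\cap\{\pm b,\pm2b\}=\emptyset$ of (\ref{eq:abE}). This is a finite, if slightly tedious, verification; the point to watch is that (\ref{eq:abE}) is exactly strong enough to exclude every multiplicity-$3$ coincidence, while it is not needed to separate the two multiplicity-$1$ pairs. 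As an alternative route one could read off from (\ref{eq:B0a}) the four members of $\orbt{a,b,a+b}$ passing through a fixed point and check pairwise that no two share three elements, but the difference-multiset computation is shorter and treats all $T$ uniformly.
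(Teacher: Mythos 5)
Your proof is correct, and it takes a genuinely different route from the paper's. The paper argues by cases on which $3$-subset $T$ of $B=\{0,a,b,a+b\}$ is under consideration: it reduces each case by a translation to $T=\{0,a,b\}$, lists via (\ref{eq:B0a}) the four members of the orbit through $0$, and checks that only $\{0,a,b,a+b\}$ contains $\{a,b\}$. You instead observe that $B=-B+(a+b)$, so $\orb{\hA}{B}=\orb{A}{B}$ consists of translates only (a fact the paper itself only records later, in Lemma~\ref{lem:symb}), and then run an autocorrelation argument: $T\subseteq B\cap(B+c)$ forces $c$ to be a difference of $B$ of multiplicity at least $3$, while (\ref{eq:abE}) caps every nonzero multiplicity at $2$. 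This treats all four choices of $T$ uniformly and isolates exactly where the hypotheses are used, at the cost of a slightly fussier bookkeeping step (your count of ``six remaining values'' should be the four values $\pm(a+b),\pm(a-b)$, and coincidences such as $a=a\pm b$ rearrange to $b=0$, which is excluded because $B\in\binom{A}{4}$ rather than by one of the relations you list; also note that a single coincidence within $\{\pm(a+b),\pm(a-b)\}$ only yields multiplicity $2$, and any triple coincidence there already entails $2a=0$, $2b=0$ or $2a=2b$, so your claim that these pairs need not be separated is right). Both arguments ultimately rest on the same finite verification that (\ref{eq:abE}) forbids the relevant coincidences; the paper's version is more elementary case-checking, while yours is shorter and closer in spirit to standard difference-family reasoning.
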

\begin{proof}
Without loss of generality we may assume $B=\{0,a,b,a+b\}$,
with $a,b\in A$ satisfying (\ref{eq:abE}).
We first suppose $T=\{0,a,b\}$. 
Then
\begin{align*}
&|\{B'\in\orb{\hA}{B}\mid T\subset B'\}|
\nexteq
|\{B'\in\orbt{a,b,a+b}\mid T\subset B'\}|
\nexteq
|\{C\in\{\{a,b,a+b\},\{-a,b-a,b\},\{-b,a-b,a\},
\\ &\qquad\qquad
\{-b,-a,-a-b\}\}
\mid \{a,b\}\subset C\}|
\nexteq
|\{\{a,b,a+b\}\}|
\nexteq
1.
\end{align*}

Next suppose $T=\{a,b,a+b\}$. Then $T-(a+b)=
\{0,-a,-b\}$ and $B-(a+b)=\{0,-a,-b,-a-b\}$. By
the first case, we see that $B-(a+b)$ is 
the only member of $\orb{\hA}{B}$ containing $T-(a+b)$.
This gives the desired result.

Next suppose $T=\{0,a,a+b\}$ or $\{0,b,a+b\}$.
Switching $a$ and $b$ if necessary, we may assume
$T=\{0,a,a+b\}$. Then
$T-a=\{0,-a,b\}$ and $B-a=\{0,-a,b,b-a\}$. 
By the first case, we see that $B-a$ is 
the only member of $\orb{\hA}{B}$ containing $T-a$.
This gives the desired result.
\end{proof}

\begin{lem}\label{lem:orbT}
Let $\orbt{a,b}\in\cT$. Then
\begin{enumerate}
\item
$\orbt{a,b}\notin\{\orbt{a,a+b},\orbt{a,b-a},\orbt{b,a-b}\}$,
\item
$\orbt{a,b-a}\neq\orbt{b,a-b}$,
\item $\orbt{a,a+b}=\orbt{a,b-a}$ if and only if 
$(2a+b,5a)=(0,0)$.
\item $\orbt{a,a+b}=\orbt{b,a-b}$ if and only if 
$(a+2b,5b)=(0,0)$.
\end{enumerate}
\end{lem}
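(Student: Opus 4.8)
The engine for all four parts is a membership criterion read off from (\ref{eq:T0}): for nonzero $c,d,c',d'$ with $\{0,c,d\},\{0,c',d'\}\in\binom{A}{3}$, one has $\orbt{c,d}=\orbt{c',d'}$ if and only if $\{c',d'\}$ is one of the six pairs $\{c,d\}$, $\{-c,d-c\}$, $\{-d,c-d\}$, $\{-c,-d\}$, $\{c,c-d\}$, $\{d,d-c\}$. Indeed $\orbt{c,d}=\orbt{c',d'}$ means $\{0,c',d'\}\in\orbt{c,d}$, and (\ref{eq:T0}) lists precisely the nonzero parts of the members of $\orbt{c,d}$ that contain $0$. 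Hence every clause of the lemma reduces to deciding whether a prescribed pair lies in an explicit six-element list, each putative match splitting into two orderings, i.e. two linear equations in $a$ and $b$. One checks at the outset that the inequalities (\ref{eq:abT}) guarantee that all the orbits named in the lemma are well defined, and I use (\ref{eq:abT}) freely below.

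For (i) I apply the criterion three times, testing whether $\{a,a+b\}$, $\{a,b-a\}$ or $\{b,a-b\}$ lies in the six-element list of $\orbt{a,b}$; for (ii) I test whether $\{b,a-b\}$ lies in the list of $\orbt{a,b-a}$. In every instance the two equations coming from a hypothetical match collapse to one of the relations forbidden by (\ref{eq:abT}), namely $2a=0$, $2b=0$, $a=\pm b$, $2a=b$, $2b=a$ or $2a=2b$; since none of these can hold, no match occurs and the asserted inequalities follow. This part is purely mechanical once the six pairs attached to each orbit are written out.

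For (iii) I rewrite $\orbt{a,a+b}=\orbt{a,b-a}$ as the condition that $\{a,a+b\}$ belong to the six-element list of $\orbt{a,b-a}$, that is, to $\{a,b-a\}$, $\{-a,b-2a\}$, $\{a-b,2a-b\}$, $\{-a,a-b\}$, $\{a,2a-b\}$, $\{b-a,b-2a\}$. Running through the twelve orderings, all of them save the ordering $a=b-2a$, $a+b=-a$ from the pair $\{-a,b-2a\}$ reduce at once to a relation barred by (\ref{eq:abT}); the exceptional ordering is equivalent to $b=3a$ together with $2a+b=0$, that is, to $(2a+b,5a)=(0,0)$. Conversely, if $(2a+b,5a)=(0,0)$ then $b=-2a$ and $5a=0$, so $a+b=-a$ and $b-2a=-4a=a$, whence $\{a,a+b\}=\{a,-a\}=\{-a,b-2a\}$ appears in the list and the two orbits coincide. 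This is the claimed equivalence.

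I deduce (iv) from (iii) by the substitution $a\leftrightarrow b$, which preserves (\ref{eq:abT}) and hence membership in $\cT$, together with the identity $\orbt{a,a+b}=\orbt{b,a+b}$; the latter holds because $-\{0,a,a+b\}+(a+b)=\{0,b,a+b\}$ exhibits $\{0,b,a+b\}$ as a member of $\orbt{a,a+b}$. Interchanging $a$ and $b$ in (iii) turns $\orbt{a,a+b}=\orbt{a,b-a}$ into $\orbt{b,a+b}=\orbt{b,a-b}$, which by the identity is $\orbt{a,a+b}=\orbt{b,a-b}$, while $(2a+b,5a)=(0,0)$ becomes $(a+2b,5b)=(0,0)$. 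The step I expect to be the main obstacle is the elimination in (iii): one must pair each of the twelve orderings with the precise inequality of (\ref{eq:abT}) that kills it, and then handle with care the single surviving ordering, whose solution carries the torsion relation $5a=0$. It is exactly this order-$5$ phenomenon — and the fact that $2a+b=0$ together with $5a=0$ also forces $2b=a$ — that makes the interaction between the surviving case and the inequalities (\ref{eq:abT}) the delicate heart of the proof; the same subtlety controls (iv) after the interchange.
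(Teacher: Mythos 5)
Your proof is correct and follows essentially the same route as the paper: both reduce each orbit equality to the membership of a specific pair in the six-element list of (\ref{eq:T0}), eliminate all but the single ordering $(a,a+b)=(b-2a,-a)$ via (\ref{eq:abT}), and deduce (iv) from (iii) through the identity $\orbt{a,a+b}=\orbt{b,a+b}$. The only cosmetic difference is that the paper disposes of $\orbt{b,a-b}$ in (i) by the symmetry $a\leftrightarrow b$ rather than by a third direct check.
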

\begin{proof}
Since $\orbt{a,b}\in\cT$, Lemma~\ref{lem:cE}(i) implies
that (\ref{eq:abT}) holds. 
If $\orbt{a,b}\in\{\orbt{a,a+b},\orbt{a,b-a}\}$, then
by (\ref{eq:T0}), we have
\begin{align*}
&\{\{a,b\},\{-a,b-a\},\{-b,a-b\},
\{-a,-b\},\{a,a-b\},\{b,b-a\}\}
\\ &\cap
\{\{a,a+b\},\{a,b-a\}\}
\neq\emptyset.
\end{align*}
By (\ref{eq:abT}), we have
$a\notin\{b,-a,b-a,-b,a-b\}$,
and so
$\{a+b,b-a\}\cap\{b,a-b\}\neq\emptyset$,
which is impossible by (\ref{eq:abT}).
Switching the role of
$a$ and $b$, we obtain $\orbt{a,b}\neq\orbt{b,a-b}$.
This establishes (i).

Since
$b\notin\{a,b-a,-a,b-2a,a-b,2a-b\}$ by (\ref{eq:abT}),
we have
\begin{align*}
\{b,a-b\}&\notin
\{\{a,b-a\},\{-a,b-2a\},\{a-b,2a-b\}
\\ &\qquad
\{-a,a-b\},\{a,2a-b\},\{b-a,b-2a\}\}.
\end{align*}
Thus
$\orbt{a,b-a}\neq\orbt{b,a-b}$ by (\ref{eq:T0}).
This proves (ii).

Also, we have
\begin{align*}
&\orbt{a,a+b}=\orbt{a,b-a}
\\ &\iff
\{a,a+b\}\in
\{\{a,b-a\},\{-a,b-2a\},
\\ &\qquad\qquad\qquad\qquad\quad
\{a-b,2a-b\},\{-a,a-b\},
\\ &\qquad\qquad\qquad\qquad\quad
\{a,2a-b\},\{b-a,b-2a\}\}
&&\text{(by (\ref{eq:T0}))}
\displaybreak[0]\\ &\iff
(a,a+b)=(b-2a,-a)
&&\text{(by (\ref{eq:abT}))}
\displaybreak[0]\\ &\iff
b=3a=-2a,
\end{align*}
establishing (iii).

Finally, since
$\orbt{a,a+b}=\orb{\hA}{-\{0,a,a+b\}+(a+b)}=\orbt{b,a+b}$,
(iv) follows from (iii).
\end{proof}

The next lemma shows that the K\"ohler
graph is indeed a graph, in the sense that every member
of $\cE$ is incident with exactly two members of $\cT$.
In~\cite{MS} the concept of K\"ohler graph is already introduced for
an abelian group with cyclic Sylow $2$-subgroup.

\begin{lem}\label{lem:blks}
If $\orbt{a,b,a+b}\in\cE$, then
\begin{equation}
\{\orb{\hA}{T}\mid T\subset\{0,a,b,a+b\},\;|T|=3\}
=
\{ \orbt{a,b},\orbt{a,a+b} \}.
\end{equation}
In particular,
$\orbt{a,b,a+b}$ 
is incident with exactly
two distinct members $\orbt{a,b}$, $\orbt{a,a+b}$ of $\cT$.
\end{lem}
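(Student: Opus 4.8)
The plan is to work throughout with the fixed representative $B=\{0,a,b,a+b\}$ of $\orb{\hA}{B}$, where $a,b$ satisfy (\ref{eq:abE}), and to prove three things separately: that the four triples $T\subset B$ fall into exactly the two $\hA$-orbits $\orbt{a,b}$ and $\orbt{a,a+b}$; that both of these orbits lie in $\cT$; and that they are distinct. Since incidence with $\orb{\hA}{B}$ in the sense of Definition~\ref{dfn:K} is tested by asking whether some $\hA$-translate of $T$ is contained in the representative $B$, and $|T|=3$, $|B|=4$, the members of $\cT$ incident with $\orb{\hA}{B}$ are precisely the orbits $\orb{\hA}{T}$ arising from the $3$-subsets $T$ of $B$. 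Hence the ``In particular'' clause follows once the three assertions are in hand.

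First I would identify the orbits. The four $3$-subsets of $B$ are $\{0,a,b\}$, $\{0,a,a+b\}$, $\{0,b,a+b\}$ and $\{a,b,a+b\}$. The first two are by definition representatives of $\orbt{a,b}$ and $\orbt{a,a+b}$. For the remaining two I would apply the reflection $\sigma$ followed by a translation: negating $\{0,b,a+b\}$ and translating by $a+b$ gives $\{0,a,a+b\}$, so $\orb{\hA}{\{0,b,a+b\}}=\orbt{a,a+b}$; likewise negating $\{0,a,b\}$ and translating by $a+b$ gives $\{a,b,a+b\}$, so $\orb{\hA}{\{a,b,a+b\}}=\orbt{a,b}$. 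This establishes the displayed equality.

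Next I would verify membership in $\cT$ by checking, via Lemma~\ref{lem:cE}(i), that each of the pairs $(a,b)$ and $(a,a+b)$ satisfies (\ref{eq:abT}). For $(a,b)$ this is a direct reading of (\ref{eq:abE}): the hypothesis $0\notin\{2a,2b\}$ gives $a,b\neq0$ and $2a,2b\neq0$, while the disjointness $\{\pm a,\pm2a\}\cap\{\pm b,\pm2b\}=\emptyset$ yields $a\neq\pm b$, $2a\notin\{b,2b\}$ and $2b\notin\{a,2a\}$. For $(a,a+b)$ the same input must be translated into the required nonequalities $a\neq\pm(a+b)$, $2a\notin\{0,a+b,2(a+b)\}$ and $2(a+b)\notin\{0,a,2a\}$; each of these reduces, after expanding, to one of the conditions in (\ref{eq:abE}) (for instance $2(a+b)\neq0$ is $2a\neq-2b$, and $2(a+b)\neq a$ is $a\neq-2b$).

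Finally, the distinctness $\orbt{a,b}\neq\orbt{a,a+b}$ is exactly the first exclusion in Lemma~\ref{lem:orbT}(i), which applies since we have just shown $\orbt{a,b}\in\cT$. I expect the main obstacle to be the third step above: the bookkeeping needed to deduce the full list of $\cT$-conditions (\ref{eq:abT}) for the shifted pair $(a,a+b)$ from the $\cE$-conditions on $(a,b)$, since here one cannot simply read off the nonequalities and must repeatedly invoke that $\{\pm a,\pm2a\}$ is disjoint from $\{\pm b,\pm2b\}$.
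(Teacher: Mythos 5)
Your proposal is correct and follows essentially the same route as the paper's proof: the same two reflection-plus-translation identities collapse the four triples onto $\orbt{a,b}$ and $\orbt{a,a+b}$, membership in $\cT$ is read off from (\ref{eq:abE}) via Lemma~\ref{lem:cE}(i) exactly as in the derivation of (\ref{eq:cE1})--(\ref{eq:cE2}), and distinctness comes from Lemma~\ref{lem:orbT}(i). The bookkeeping you flag as the main obstacle does go through just as you sketch it.
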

\begin{proof}
Since $\{0,a,a+b\}=-\{0,b,a+b\}+(a+b)$,
we have
\begin{equation}\label{eq:blks3}
\orbt{a,a+b}=\orbt{b,a+b}.
\end{equation}
Since $\{0,a,b\}=-\{a,b,a+b\}+(a+b)$,
we have
\begin{equation}\label{eq:blks4}
\orbt{a,b}=\orb{\hA}{\{a,b,a+b\}}.
\end{equation}
Thus
\begin{align*}
&\{\orb{\hA}{T}\mid T\subset\{0,a,b,a+b\},\;|T|=3\}
\\ &=
\{ \orbt{a,b}, \orbt{a,a+b},
\orbt{b,a+b},\orb{\hA}{\{a,b,a+b\}}\}
\nexteq
\{ \orbt{a,b},\orbt{a,a+b} \}
&&\text{(by (\ref{eq:blks3}), (\ref{eq:blks4})).}
\end{align*}

Since $\orbt{a,b,a+b}\in\cE$,
Lemma~\ref{lem:cE} implies
\begin{align}
&a\neq\pm b,\; 2a\notin\{0,b,2b\},\;
2b\notin\{0,a,2a\}\label{eq:cE1}\\
&a\neq\pm(a+b),\;2a\notin\{0,a+b,2(a+b)\},\;
2(a+b)\notin\{0,a,2a\}.\label{eq:cE2}
\end{align}
By (\ref{eq:cE1}), (\ref{eq:cE2}), we have
$\orbt{a,b}\in\cT$,
$\orbt{a,a+b}\in\cT$,
respectively.
Therefore, the elements of $\cT$ which are
incident with $\orbt{a,b,a+b}$ are
$\orbt{a,b}$ and $\orbt{a,a+b}$.
By Lemma~\ref{lem:orbT}(i), we have
$\orbt{a,b}\neq\orbt{a,a+b}$.
\end{proof}

Finally, we show that the K\"ohler graph has no multiple
edges. To do this, we first determine the edges incident with
a given vertex.

\begin{lem}\label{lem:2.6}
If $\orbt{a,b}\in\cT$, then the edges incident with $\orbt{a,b}$
are
\begin{equation}\label{eq:2.6}
\{\orbt{a,b,a+b},\orbt{a,b,b-a},\orbt{a,b,a-b}\}\cap\cE.
\end{equation}
\end{lem}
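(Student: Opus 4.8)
The plan is to prove the asserted set equality by showing each side is contained in the other. The containment of (\ref{eq:2.6}) in the set of edges incident with $\orbt{a,b}$ is the routine direction: each of the three quadruples $\{0,a,b,a+b\}$, $\{0,a,b,b-a\}$, $\{0,a,b,a-b\}$ contains the triple $\{0,a,b\}$, and $\{0,a,b\}$ is a member of $\orbt{a,b}$. Hence, by Definition~\ref{dfn:K}, as soon as one of the orbits $\orbt{a,b,a+b}$, $\orbt{a,b,b-a}$, $\orbt{a,b,a-b}$ lies in $\cE$, that orbit is an edge incident with $\orbt{a,b}$. Intersecting with $\cE$ is exactly what discards the orbits among the three that fail to be edges.

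For the reverse containment, suppose $\orb{\hA}{B}\in\cE$ is incident with $\orbt{a,b}$. By Definition~\ref{dfn:K} some $T'\in\orbt{a,b}$ satisfies $B\supset T'$. Since incidence depends only on the orbit of $B$ (because $\hA$ carries the triples of $\orbt{a,b}$ among themselves and the quadruples of $\orb{\hA}{B}$ among themselves), I would first replace $B$ by a convenient representative of its orbit: writing $T'=g(\{0,a,b\})$ with $g\in\hA$ and passing from $B$ to $g^{-1}(B)$, I may assume $\{0,a,b\}\subset B$, so that $B=\{0,a,b,x\}$ for a single additional point $x\in A\setminus\{0,a,b\}$. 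It then remains only to determine which values of $x$ are admissible.

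The key step is to read off the constraint on $x$ from membership in $\cE$. By the definition (\ref{eq:defE}), every member of $\cE$ is the $\hA$-orbit of a quadruple $\{0,c,d,c+d\}$, and such a quadruple splits into two pairs of equal sum, namely $\{0,c+d\}$ and $\{c,d\}$. This equal-sum-pairing property is invariant under translation by elements of $A$ and under $\sigma$, hence it passes to every element of the orbit, and in particular to $B=\{0,a,b,x\}$. There are exactly three ways to partition $\{0,a,b,x\}$ into two pairs; equating the two pair-sums in each yields $x=a-b$, $x=b-a$, and $x=a+b$ respectively. Thus $B$ is one of $\{0,a,b,a-b\}$, $\{0,a,b,b-a\}$, $\{0,a,b,a+b\}$, so $\orb{\hA}{B}$ belongs to the three-element set, and, being in $\cE$ by hypothesis, belongs to (\ref{eq:2.6}).

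The step I expect to require the most care is the reverse containment: pinning down that exactly three extensions $\{0,a,b,x\}$ can be edges. The cleanest justification is the equal-sum-pairing characterization above, but I should check that it is genuinely a property of the orbit (invariance under $A$-translation and under $\sigma$) rather than of a fixed representative. An alternative, closer in spirit to the earlier lemmas, is to write $\orb{\hA}{B}=\orbt{c,d,c+d}$ and apply Lemma~\ref{lem:blks}, so that the two vertices incident with this edge are $\orbt{c,d}$ and $\orbt{c,c+d}$; one of them equals $\orbt{a,b}$, and then (\ref{eq:T0}) together with (\ref{eq:B0a}) identifies $\orb{\hA}{B}$ with one of the three listed orbits. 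I expect the partition argument to be shorter, the case analysis through (\ref{eq:T0}) being the more tedious route.
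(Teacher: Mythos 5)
Your proof is correct and follows essentially the same route as the paper's: both directions are handled the same way, and for the nontrivial containment both arguments pass to a representative of the edge-orbit that actually contains $\{0,a,b\}$ and then pin down the fourth point. The only difference is in that last computation: the paper writes the representative as $\{0,c,d,c+d\}$ via (\ref{eq:B0a}) and matches $\{a,b\}$ into $\{c,d,c+d\}$, whereas you use the (correctly checked) $\hA$-invariant fact that every member of an orbit in $\cE$ splits into two pairs with equal sums — both yield exactly the three candidates in (\ref{eq:2.6}).
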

\begin{proof}
Observe that $\orbt{a,b}$ is incident with an orbit $\orb{\hA}{B}$ if 
and only if $\{0,a,b\}\subset B'$ for some $B'\in\orb{\hA}{B}$.
In this case, as $0\in B'$, we may assume $B'=\{0,c,d,c+d\}$
for some $c,d\in A$ by (\ref{eq:B0a}). Then
\[
\{a,b\}\in\{\{c,d\},\{c,c+d\},\{d,c+d\}\},
\]
hence
\[
\{c,d,c+d\}\in\{\{a,b,a+b\},\{a,b,b-a\},\{a,b,a-b\}\}.
\]
Thus 
\[
\orb{\hA}{B}=\orb{\hA}{B'}
\in\{\orbt{a,b,a+b},\orbt{a,b,b-a},\orbt{a,b,a-b}\}.
\]

Conversely, every member of (\ref{eq:2.6}) is incident with
$\orbt{a,b}$.
\end{proof}

We conclude this section by a remark.
The K\"ohler graph $\cG=(\cT,\cE)$ is defined as an incidence
structure, so it is nontrivial to prove that $\cG$
has no multiple edges. 
Suppose that a vertex $\orbt{a,b}\in\cT$ is incident with
multiple edges. By Lemma~\ref{lem:2.6}, the possible edges
incident with $\orbt{a,b}$ are
\begin{equation}\label{eq:L2.7a}
\orbt{a,b,a+b},\orbt{a,b,b-a},\orbt{a,b,a-b},
\end{equation}
which are incident with the vertices
\begin{equation}\label{eq:L2.7b}
\orbt{a,a+b},\orbt{a,b-a},\orbt{b,a-b},
\end{equation}
respectively, by Lemma~\ref{lem:blks}. 
By Lemma~\ref{lem:orbT}(ii), we have $\orbt{b,a-b}\neq
\orbt{a,b-a}$, hence the pair $\orbt{a,b,b-a},\orbt{a,b,a-b}$
does not form a pair of multiple edges sharing the common
endpoints. Thus
\begin{equation}\label{eq:L2.7c}
\orbt{a,a+b}=\orbt{a,b-a}\text{ or }\orbt{b,a-b},
\end{equation}
and $\orbt{a,b,a+b}$ is one of the multiple edges.
In particular, $\orbt{a,b,a+b}\in\cE$, and hence
$2a+b\neq0$ and $a+2b\neq0$ by Lemma~\ref{lem:cE}(ii).
Then by Lemma~\ref{lem:orbT}(iii)--(iv), we have
$\orbt{a,a+b}\neq\orbt{a,b-a},\orbt{b,a-b}$, contradicting
(\ref{eq:L2.7c}).
Therefore, the K\"ohler graph $\cG$
has no multiple edges. 

\section{The structure of the K\"ohler graphs}

\begin{lem}\label{lem:BT}
Let $\orbt{a,b}\in\cT$. 
\begin{enumerate}
\item $\orbt{a,a+b}\in\cT$ if and only if
$0\notin\{2a+b,a+2b,2a+2b\}$,
\item $\orbt{a,b-a}\in\cT$ if and only if
$0\notin\{3a-b,3a-2b,4a-2b\}$.
\end{enumerate}
\end{lem}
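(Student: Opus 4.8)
The plan is to reduce both statements directly to the membership criterion for $\cT$ in Lemma~\ref{lem:cE}(i), applied to the pairs $(a,a+b)$ and $(a,b-a)$ respectively. The only content is bookkeeping: expanding the clauses of (\ref{eq:abT}) for the new pair and sorting them into those that hold automatically because $\orbt{a,b}\in\cT$ and those that survive as genuinely new constraints.

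For part (i), I would first check that $\{0,a,a+b\}\in\binom{A}{3}$, so that Lemma~\ref{lem:cE}(i) applies to $(a,a+b)$: here $a\neq0$ and $b\neq0$ are built into $\orbt{a,b}\in\cT$, and $a+b\neq0$ follows from $a\neq\pm b$ in (\ref{eq:abT}). Then $\orbt{a,a+b}\in\cT$ is equivalent to $a\neq\pm(a+b)$, $2a\notin\{0,a+b,2(a+b)\}$ and $2(a+b)\notin\{0,a,2a\}$. Expanding these, the clauses reading $b\neq0$, $a\neq b$, $2a\neq0$ and $2b\neq0$ hold automatically by (\ref{eq:abT}) for $\orbt{a,b}$, while the remaining clauses read $2a+b\neq0$, $2a+2b\neq0$ and $a+2b\neq0$. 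This is exactly $0\notin\{2a+b,a+2b,2a+2b\}$, proving (i).

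Part (ii) proceeds identically with $(a,b-a)$ in place of $(a,a+b)$. I would first verify $\{0,a,b-a\}\in\binom{A}{3}$ using $2a\neq b$ from (\ref{eq:abT}), then apply Lemma~\ref{lem:cE}(i). Expanding $a\neq\pm(b-a)$, $2a\notin\{0,b-a,2(b-a)\}$ and $2(b-a)\notin\{0,a,2a\}$, the clauses amounting to $2a\neq b$, $b\neq0$, $2a\neq0$ and $2a\neq2b$ follow from (\ref{eq:abT}), leaving $3a-b\neq0$, $4a-2b\neq0$ and $3a-2b\neq0$, i.e.\ $0\notin\{3a-b,3a-2b,4a-2b\}$.

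There is no conceptual obstacle; the only place to be careful is matching each expanded clause against the correct clause of (\ref{eq:abT}) for $\orbt{a,b}$. In particular one must recognize that $2(b-a)\neq0$ is the same as $2a\neq2b$, one of the clauses of $2a\notin\{0,b,2b\}$, and that $2(a+b)\neq2a$ amounts to $2b\neq0$. Since $A$ is abelian one may cancel equal terms freely, but not divide by $2$, so keeping the arithmetic in $A$ straight is the whole job.
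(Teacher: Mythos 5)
Your proof is correct and follows essentially the same route as the paper: apply Lemma~\ref{lem:cE}(i) to the pairs $(a,a+b)$ and $(a,b-a)$, then discard the clauses already guaranteed by (\ref{eq:abT}) for $\orbt{a,b}$. Your explicit check that $\{0,a,a+b\}$ and $\{0,a,b-a\}$ are genuine $3$-subsets is a point the paper leaves implicit, but it changes nothing in the argument.
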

\begin{proof}
(i)
\begin{align*}
&\orbt{a,a+b}\in\cT
\\ &\iff
a\neq\pm(a+b),\;2a\notin\{0,a+b,2(a+b)\},
\\ &\qquad\quad
2(a+b)\notin\{0,a,2a\}
&&\text{(by Lemma~\ref{lem:cE}(i))}
\\ &\iff
0\notin\{2a+b,a+2b,2a+2b\}
&&\text{(by (\ref{eq:abT})).}
\end{align*}
(ii)
\begin{align*}
&\orbt{a,b-a}\in\cT
\\ &\iff
a\neq\pm(b-a),\;2a\notin\{0,b-a,2(b-a)\},
\\ &\qquad\quad
2(b-a)\notin\{0,a,2a\}
&&\text{(by Lemma~\ref{lem:cE}(i))}
\\ &\iff
0\notin\{3a-b,3a-2b,4a-2b\}
&&\text{(by (\ref{eq:abT})).}
\end{align*}
\end{proof}

For $v \in \cT$ we denote the set of neighbors of $v$ by
$N(v)$.
\begin{lem}\label{lem:nbs}
Let $\orbt{a,b}\in\cT$.
Then
\begin{equation}\label{eq:L3.2a}
N(\orbt{a,b}) = \{\orbt{a,a+b},\orbt{b,a-b},\orbt{a,b-a}\}\cap\cT.
\end{equation}
\end{lem}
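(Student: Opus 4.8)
The plan is to combine the edge description from Lemma~\ref{lem:2.6} with the incidence relation from Lemma~\ref{lem:blks}, and then discard the edges that fail to lie in $\cE$. The neighbors of $\orbt{a,b}$ are by definition the other endpoints of the edges incident with $\orbt{a,b}$. By Lemma~\ref{lem:2.6}, those edges are exactly the members of $\{\orbt{a,b,a+b},\orbt{a,b,b-a},\orbt{a,b,a-b}\}$ that happen to lie in $\cE$, so I would start by reading off the two endpoints of each of these three candidate edges.

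For the edge $\orbt{a,b,a+b}$, Lemma~\ref{lem:blks} (applied directly with this $a$ and $b$) says its two endpoints are $\orbt{a,b}$ and $\orbt{a,a+b}$, so the neighbor contributed is $\orbt{a,a+b}$. For the other two candidates I would rewrite them in the canonical form $\orbt{c,d,c+d}$ so that Lemma~\ref{lem:blks} applies. The edge $\orbt{a,b,a-b}$ can be written as $\orbt{b,a-b,a}=\orbt{b,a-b,b+(a-b)}$, i.e.\ with $(c,d)=(b,a-b)$; then Lemma~\ref{lem:blks} gives endpoints $\orbt{b,a-b}$ and $\orbt{b,a}=\orbt{a,b}$, so the new neighbor is $\orbt{b,a-b}$. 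Similarly $\orbt{a,b,b-a}$ can be written with $(c,d)=(a,b-a)$, since $a+(b-a)=b$, giving endpoints $\orbt{a,b-a}$ and $\orbt{a,b}$, so the new neighbor is $\orbt{a,b-a}$. These are precisely the three candidate neighbors appearing in~(\ref{eq:L3.2a}).

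Finally I would account for the intersection with $\cT$. A candidate neighbor is an actual vertex of the graph only when the corresponding edge actually belongs to $\cE$, and by Lemma~\ref{lem:blks} an edge in $\cE$ is incident with two genuine members of $\cT$; conversely if the candidate endpoint is not in $\cT$ the edge cannot be in $\cE$. Thus the neighbor set is obtained from $\{\orbt{a,a+b},\orbt{b,a-b},\orbt{a,b-a}\}$ by keeping exactly those lying in $\cT$, which is the right-hand side of~(\ref{eq:L3.2a}). I expect the only delicate point to be confirming that, for each candidate, membership of the edge in $\cE$ is equivalent to membership of the listed endpoint in $\cT$; since $\orbt{a,b}\in\cT$ already holds, this equivalence should follow from matching the defining conditions of $\cT$ and $\cE$ in Lemma~\ref{lem:cE}, together with the rewriting used above. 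The bookkeeping of canonical forms, rather than any real difficulty, is the main thing to get right.
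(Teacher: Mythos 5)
Your proposal is correct and follows essentially the same route as the paper: the containment of $N(\orbt{a,b})$ in the candidate set comes from Lemmas~\ref{lem:2.6} and~\ref{lem:blks}, and the reverse containment reduces to showing that, for each candidate, the endpoint lying in $\cT$ is equivalent to the corresponding triple-orbit lying in $\cE$. The one step you defer --- that, given $\orbt{a,b}\in\cT$, one has $\orbt{a,a+b}\in\cT$ if and only if $\orbt{a,b,a+b}\in\cE$ (both conditions being equivalent to $0\notin\{2a+b,a+2b,2a+2b\}$, and similarly for $\orbt{a,b-a}$ and $\orbt{b,a-b}$) --- is exactly what Lemma~\ref{lem:BT} together with Lemma~\ref{lem:cE}(ii) supplies, so your argument is complete once that is cited.
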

\begin{proof}
By Lemmas \ref{lem:blks} and \ref{lem:2.6},
$N(\orbt{a,b})$ is
contained in (\ref{eq:L3.2a}).

Conversely, 
\begin{align*}
&\orbt{a,a+b}\in\cT
\\ &\iff
0\notin\{2a+b,a+2b,2a+2b\}
&&\text{(by Lemma~\ref{lem:BT}(i))}
\\ &\iff
\orbt{a,b,a+b}\in\cE
&&\text{(by Lemma~\ref{lem:cE}(ii))}
\\ &\implies
\orbt{a,a+b}
\in N(\orbt{a,b})
&&\text{(by Lemma \ref{lem:blks}),}
\end{align*}
and
\begin{align*}
&\orbt{a,b-a}\in\cT
\\ &\iff
0\notin\{3a-b,3a-2b,4a-2b\}
&&\text{(by Lemma~\ref{lem:BT}(i))}
\\ &\iff
\orbt{a,b,b-a}\in\cE
&&\text{(by Lemma~\ref{lem:cE}(ii)).}
\\ &\implies
\orbt{a,b-a}
\in N(\orbt{a,b})
&&\text{(by Lemma \ref{lem:blks}).}
\end{align*}
Finally, switching
$a$ and $b$, we see that $\orbt{b,a-b}\in\cT$
implies
$\orbt{b,a-b} \in N(\orbt{a,b})$.
\end{proof}

\begin{lem}\label{lem:orbT3}
Let $\orbt{a,b}\in\cT$. Then the degree of $\orbt{a,b}$
is $3$ if and only if
\[
0\notin\{2a+b,a+2b,2a+2b,3a-b,3a-2b,4a-2b,
3b-a,3b-2a,4b-2a\}.
\]
\end{lem}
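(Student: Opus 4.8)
The plan is to reduce the degree computation to the explicit neighborhood description in Lemma~\ref{lem:nbs} and then to account separately for (a) when the three candidate neighbors listed there actually lie in $\cT$, and (b) when they are pairwise distinct. By Lemma~\ref{lem:nbs},
\[
N(\orbt{a,b}) = \{\orbt{a,a+b},\orbt{b,a-b},\orbt{a,b-a}\}\cap\cT,
\]
so the degree of $\orbt{a,b}$ is at most three, and it equals three precisely when all three listed orbits belong to $\cT$ and are mutually distinct. The first task is thus to translate membership in $\cT$ into the nine linear conditions; the second is to show that, once all three lie in $\cT$, distinctness comes for free.

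For the membership translation I would invoke Lemma~\ref{lem:BT}: part~(i) gives $\orbt{a,a+b}\in\cT$ if and only if $0\notin\{2a+b,a+2b,2a+2b\}$, and part~(ii) gives $\orbt{a,b-a}\in\cT$ if and only if $0\notin\{3a-b,3a-2b,4a-2b\}$. Since $\orbt{a,b}=\orbt{b,a}$, swapping the roles of $a$ and $b$ in part~(ii) yields $\orbt{b,a-b}\in\cT$ if and only if $0\notin\{3b-a,3b-2a,4b-2a\}$. Combining the three equivalences shows that all three candidates lie in $\cT$ exactly when $0$ avoids the nine-element set in the statement. This already settles the ``only if'' direction: if the degree is three, then, as $N(\orbt{a,b})$ is a subset of a three-element set, all three candidates must lie in $\cT$, forcing the nine conditions.

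For the ``if'' direction the one remaining point is distinctness, and here I would use Lemma~\ref{lem:orbT}. Part~(ii) gives $\orbt{a,b-a}\neq\orbt{b,a-b}$ unconditionally. For the other two possible coincidences, parts~(iii) and~(iv) say that $\orbt{a,a+b}=\orbt{a,b-a}$ forces $2a+b=0$ and that $\orbt{a,a+b}=\orbt{b,a-b}$ forces $a+2b=0$. Under the standing nine conditions we have $2a+b\neq0$ and $a+2b\neq0$, so neither coincidence can occur; hence the three orbits are pairwise distinct and the degree is three.

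The only subtlety—rather than a genuine obstacle—is recognizing that membership in $\cT$ and distinctness interact favorably: the very hypotheses $2a+b\neq0$ and $a+2b\neq0$ that guarantee $\orbt{a,a+b}\in\cT$ are exactly the ones needed, via Lemma~\ref{lem:orbT}(iii)--(iv), to rule out the collisions of $\orbt{a,a+b}$ with the other two candidates. Once this is noticed, the proof is a direct assembly of Lemmas~\ref{lem:nbs}, \ref{lem:BT}, and~\ref{lem:orbT}, requiring no further computation.
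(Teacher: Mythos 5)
Your proof is correct and takes essentially the same route as the paper's: both reduce the degree computation to Lemma~\ref{lem:nbs}, translate membership in $\cT$ via Lemma~\ref{lem:BT} (with the $a\leftrightarrow b$ swap for the third candidate), and use Lemma~\ref{lem:orbT}(ii)--(iv) to observe that the possible coincidences force $2a+b=0$ or $a+2b=0$ and are therefore absorbed by the nine conditions. The only difference is presentational --- the paper argues the contrapositive in one chain of equivalences, while you separate membership from distinctness.
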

\begin{proof}
The degree of $\orbt{a,b}$ is at most $3$ by
Lemma~\ref{lem:nbs}. 
The degree is exactly $3$ if and only if
$\orbt{a,a+b},\orbt{a,b-a}$ and $\orbt{b,a-b}$ are
distinct elements of $\cT$.
By Lemma~\ref{lem:orbT}(ii)--(iv),
$\orbt{a,a+b},\orbt{a,b-a}$ and $\orbt{b,a-b}$ are
distinct if and only if
$(2a+b,5a) \ne (0,0)$ and $(a+2b,5b) \ne (0,0)$.
Therefore,
\begin{align*}
&\text{degree of }\orbt{a,b}\text{ is not }3
\displaybreak[0]\\ &\iff
(2a+b,5a)=(0,0)\text{ or }(a+2b,5b)=(0,0)
\text{ or }\\ &\qquad\quad
\{\orbt{a,a+b},\orbt{a,b-a},\orbt{b,a-b}\}
\not\subset\cT
\displaybreak[0]\\ &\iff
(2a+b,5a)=(0,0)\text{ or }(a+2b,5b)=(0,0)
\text{ or }\\ &\qquad\quad
0\in\{2a+b,a+2b,2a+2b\}
\text{ or }\\ &\qquad\quad
0\in\{3a-b,3a-2b,4a-2b\}
\text{ or }\\ &\qquad\quad
0\in\{3b-a,3b-2a,4b-2a\}
&&\text{(by Lemma~\ref{lem:BT})}
\displaybreak[0]\\ &\iff
0\in\{2a+b,a+2b,2a+2b\}
\\ &\qquad\qquad\quad
\cup\{3a-b,3a-2b,4a-2b\}
\\ &\qquad\qquad\quad
\cup\{3b-a,3b-2a,4b-2a\}.
\end{align*}
\end{proof}

\begin{exam}\label{exam:Z4Z4}
The K\"ohler graph of $\Z_4^2$ is the $3$-cube.
Indeed,
let $A = \langle g_1 \rangle \oplus \langle g_2 \rangle \simeq \Z_4^2$.
The K\"ohler graph of $A$ has
$8$ vertices
\begin{equation}\label{eq:3cube}
v_i =
\left\{\begin{array}{cc}
\orbt{g_1,(i-1)g_1+g_2} & \text{if $i=1,2,3,4$}; \\
\orbt{g_1+2g_2,(i-4)g_1+(2i-7)g_2} & \text{if $i=5,6,7,8$}.
\end{array} \right.
\end{equation}
where
the neighbors $N(v_i)$ of $v_i$ are determined as follows:
\begin{align*}
N(v_1) = \{v_2,v_4,v_5\},\; N(v_2) = \{v_1,v_3,v_8\},\\
N(v_3) = \{v_2,v_4,v_7\},\; N(v_4) = \{v_1,v_3,v_6\},\\
N(v_5) = \{v_1,v_8,v_6\},\; N(v_6) = \{v_4,v_7,v_5\},\\
N(v_7) = \{v_3,v_8,v_6\},\; N(v_8) = \{v_2,v_7,v_5\}.\\
\end{align*}
\end{exam}

\begin{lem}\label{lem:order7or8}
Let $\orbt{a,b}\in\cT$. Then $\orbt{a,b}$
is an isolated vertex of $\cG$
if and only if one of the following conditions holds:
\begin{enumerate}
\item $\orbt{a,b}=\orbt{a,3a}$ with $0\in \{7a,8a\}$, or
$\orbt{a,b}=\orbt{3b,b}$ with $0\in \{7b,8b\}$;
\item $\orbt{a,b}=\orbt{a,-a+h}$ with $6a=0$ and 
$2h=0$.
\end{enumerate}
\end{lem}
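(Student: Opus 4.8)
The plan is to translate isolation into an explicit arithmetic condition on $a,b$, and then run a short case analysis organized by the symmetry $a\leftrightarrow b$.

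First I would combine Lemma~\ref{lem:nbs} with Lemma~\ref{lem:BT} (applied also with $a$ and $b$ interchanged) together with Lemma~\ref{lem:cE}(i) to reduce the statement to a purely arithmetic one. Since $N(\orbt{a,b})=\{\orbt{a,a+b},\orbt{b,a-b},\orbt{a,b-a}\}\cap\cT$, the vertex $\orbt{a,b}$ is isolated precisely when none of these three orbits lies in $\cT$. By Lemma~\ref{lem:BT}(i),(ii) (the latter also under the switch $a\leftrightarrow b$), this happens exactly when the three conditions (A) $0\in\{2a+b,a+2b,2a+2b\}$, (B) $0\in\{3a-b,3a-2b,4a-2b\}$, and (C) $0\in\{3b-a,3b-2a,4b-2a\}$ hold simultaneously, while $\orbt{a,b}\in\cT$ keeps (\ref{eq:abT}) available throughout.

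For the forward direction I would split on which term of (A) vanishes. The involution $a\leftrightarrow b$ exchanges the subcases $2a+b=0$ and $a+2b=0$, swaps (B) with (C), fixes the subcase $2a+2b=0$, and interchanges the two alternatives of conclusion~(i) while fixing~(ii); so it suffices to treat $2a+b=0$ and $2a+2b=0$. The key observation is the orbit identity $\orbt{a,-2a}=\orbt{a,3a}$, which follows at once from (\ref{eq:T0}) (the representative $\{a,a-b\}$ with $b=-2a$ is $\{a,3a\}$). Hence if $2a+b=0$ then already $\orbt{a,b}=\orbt{a,3a}$, and substituting $b=-2a$ together with $5a\neq0$ from (\ref{eq:abT}) collapses (B)$\wedge$(C) to $0\in\{7a,8a\}$, which is conclusion~(i). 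If instead $2a+2b=0$, I would set $h=a+b$, so $2h=0$ and $b=-a+h$; expanding gives (B) $\iff$ $4a=h$, $5a=0$, or $6a=0$, and (C) $\iff$ $4a=h$, $5a=h$, or $6a=0$. Since $h\neq0$ (because $a\neq-b$), the conjunction forces either $6a=0$, which is conclusion~(ii) with $b=-a+h$, or $4a=h$, i.e.\ $b=3a$ with $8a=0$, which is conclusion~(i). For the converse I would simply insert the normal forms and verify (A),(B),(C): taking $b=3a$, the term $3a-b=0$ gives (B), while (A) and (C) become $0\in\{5a,7a,8a\}$ and $0\in\{7a,8a,10a\}$, both guaranteed by $0\in\{7a,8a\}$ (the second alternative of~(i) then follows by symmetry); and for~(ii), writing $b=-a+h$ with $6a=0$, $2h=0$, one checks $2a+2b=2h=0$, $4a-2b=6a=0$, and $4b-2a=-6a+4h=0$.

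The main obstacle is bookkeeping: a priori there are $3\times3\times3$ ways for (A),(B),(C) to hold, and the real work is organizing them. Two devices keep this under control — the symmetry $a\leftrightarrow b$, which halves the subcases of (A) and pairs (B) with (C), and the identity $\orbt{a,-2a}=\orbt{a,3a}$, which rewrites the raw divisibility data as the clean orbit normal forms $\orbt{a,3a}$ and $\orbt{a,-a+h}$. The subtle point is that the extracted conditions must match~(i)/(ii) as \emph{orbits}, not merely as pairs; this is why the nonvanishing constraints in (\ref{eq:abT}) (notably $5a\neq0$ in the first subcase and $h=a+b\neq0$ in the third) must be tracked carefully, since they are exactly what eliminates the degenerate branches such as $h=0$.
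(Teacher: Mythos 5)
Your reduction to the simultaneous conditions $0\in\{2a+b,a+2b,2a+2b\}$, $0\in\{3a-b,3a-2b,4a-2b\}$, $0\in\{3b-a,3b-2a,4b-2a\}$ via Lemmas~\ref{lem:nbs} and~\ref{lem:BT} is exactly the paper's argument, which then dismisses the rest as ``a tedious computation.'' Your case analysis (organized by the $a\leftrightarrow b$ symmetry, the identity $\orbt{a,-2a}=\orbt{a,3a}$, and the substitution $h=a+b$) is correct and simply supplies the computation the paper omits, so this is essentially the same proof.
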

\begin{proof}
By Lemma~\ref{lem:nbs}, $\orbt{a,b}$ is an
isolated vertex of $\cG$ if and only if
\[
\orbt{a,a+b}\notin\cT,\;
\orbt{b,a-b}\notin\cT,\;
\orbt{a,b-a}\notin\cT.
\]
By Lemma~\ref{lem:BT}, this occurs precisely when
\begin{align*}
0\in&\{2a+b,a+2b,2a+2b\}\cap
\{3a-b,3a-2b,4a-2b\}
\\ &\cap
\{3b-a,3b-2a,4b-2a\}.
\end{align*}
A tedious computation establishes the desired result.
\end{proof}

\begin{exam}\label{exam:Z7Z8}
Let $A=\langle a\rangle$ be the cyclic group of order
$7$ or $8$. Then by Lemma~\ref{lem:order7or8},
the K\"ohler graphs of $\Z_7$, $\Z_8$ both consist of
a single vertex $\orbt{a,3a}$.
\end{exam}

\begin{lem}\label{lem:2gen}
Suppose that  $\orbt{a,b}\in\cT$ and $\orbt{c,d}\in\cT$
belong to the same connected component of $\cG$.
Then $\langle a,b\rangle=\langle c,d
\rangle$.
\end{lem}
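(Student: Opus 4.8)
The plan is to exploit the combinatorial description of neighbors in Lemma~\ref{lem:nbs} and reduce the statement to a claim about adjacent vertices, which is then propagated along a path. Since $\orbt{a,b}$ and $\orbt{c,d}$ lie in the same connected component, there is a path $\orbt{a,b}=v_0,v_1,\dots,v_n=\orbt{c,d}$ in $\cG$ with $v_{i-1}$ adjacent to $v_i$ for each $i$. If I can show that adjacent vertices generate equal subgroups, then by induction on $n$ I obtain $\langle a,b\rangle=\langle c,d\rangle$.

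Before treating adjacency I must first check that the subgroup $\langle a,b\rangle$ is genuinely a function of the vertex $\orbt{a,b}$, and not merely of the representative pair $(a,b)$. Setting $S(\orbt{a,b})=\langle a,b\rangle$, I would verify well-definedness directly from \eqref{eq:T0}: the six pairs listed there,
\[
\{a,b\},\{-a,b-a\},\{-b,a-b\},\{-a,-b\},\{a,a-b\},\{b,b-a\},
\]
exhaust the representatives of $\orbt{a,b}$ having $0$ as base point, and each of them generates the same subgroup $\langle a,b\rangle$ (for instance $\langle -a,b-a\rangle=\langle a,b\rangle$ because $b=a+(b-a)$, and similarly in the remaining cases). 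Equivalently, $\langle a,b\rangle$ is the subgroup generated by all pairwise differences of $\{0,a,b\}$, which is manifestly invariant under translation by an element of $A$ and under negation, hence depends only on $\orb{\hA}{\{0,a,b\}}$. This is the single place where a genuine verification is required.

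With $S$ well-defined, the adjacency claim follows at once from Lemma~\ref{lem:nbs}: every neighbor of $\orbt{a,b}$ lies among $\orbt{a,a+b}$, $\orbt{b,a-b}$, and $\orbt{a,b-a}$, and for each of these the generated subgroup is unchanged, since $\langle a,a+b\rangle=\langle b,a-b\rangle=\langle a,b-a\rangle=\langle a,b\rangle$. Thus every edge of $\cG$ joins two vertices with the same $S$-value, and the induction along the path then yields $\langle a,b\rangle=\langle c,d\rangle$. The main (and essentially only) obstacle is the well-definedness of $S$ in the middle step; once that is settled, the adjacency computation and the path induction are routine.
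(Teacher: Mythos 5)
Your proposal is correct and follows essentially the same route as the paper: the paper's proof also reduces to the two cases of equal vertices (well-definedness via (\ref{eq:T0})) and adjacent vertices (via Lemma~\ref{lem:nbs}), with the path induction left implicit. You have merely spelled out the details the paper compresses into two sentences.
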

\begin{proof}
It suffices to prove the assertion when $\orbt{c,d}=
\orbt{a,b}$ or $\orbt{c,d}$ is adjacent to $\orbt{a,b}$.
In the former case, the result follows immediately
from (\ref{eq:T0}). In the latter case,
the result follows from Lemma~\ref{lem:nbs}.
\end{proof}

\begin{lem}\label{lem:2gen2}
Let $A'$ be a subgroup of $A$. Then the K\"ohler graph
of $A'$ is isomorphic to a union of connected components
of $\cG$. In particular,
every connected component of the K\"ohler graph of $A$ is 
isomorphic to a connected component of the K\"ohler
graph of a subgroup of $A$ generated by two elements.
\end{lem}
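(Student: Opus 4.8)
The plan is to prove Lemma~\ref{lem:2gen2} in two stages, first establishing the claim for a subgroup $A'$ and then deriving the ``in particular'' statement as a corollary. The key structural input is Lemma~\ref{lem:2gen}, which tells us that within a single connected component of the K\"ohler graph, the subgroup $\langle a,b\rangle$ generated by the two defining elements of a vertex $\orbt{a,b}$ is an invariant of the component. This is precisely the bridge between the K\"ohler graph of $A$ and the K\"ohler graphs of its subgroups.

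First I would set up the comparison between the two graphs. Let $A'\le A$, and let $\cG'=(\cT',\cE')$ denote the K\"ohler graph of $A'$. The crucial observation is that the defining conditions \eqref{eq:abT} for membership in $\cT$ and \eqref{eq:abE} for membership in $\cE$ are \emph{intrinsic} to the subgroup generated by $a,b$: they involve only equalities and non-equalities among $\Z$-linear combinations of $a$ and $b$, and such a relation holds in $A'$ if and only if it holds in $A$, since $A'$ is a subgroup. Likewise the orbit notation $\orbt{a,b}$ computed inside $A'$ consists of sets of elements of $A'$, and by \eqref{eq:T0} the $\hat{A'}$-orbit of $\{0,a,b\}$ coincides with the $\hA$-orbit restricted to elements of $A'$. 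Thus I would exhibit the natural map sending a vertex $\orbt{a,b}\in\cT'$ (with $a,b\in A'$) to the vertex $\orbt{a,b}\in\cT$, argue it is well defined and injective using \eqref{eq:T0}, and show via Lemma~\ref{lem:nbs} that it preserves and reflects adjacency, since the neighbor sets $\{\orbt{a,a+b},\orbt{b,a-b},\orbt{a,b-a}\}$ are computed by the same formulas in both groups. This identifies $\cG'$ with the full subgraph of $\cG$ induced on those vertices $\orbt{a,b}$ with $\langle a,b\rangle\subseteq A'$.

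Next I would invoke Lemma~\ref{lem:2gen} to upgrade ``induced subgraph'' to ``union of connected components.'' The point is that the image of $\cG'$ in $\cG$ is exactly the set of vertices $\orbt{a,b}$ for which $\langle a,b\rangle\subseteq A'$, and by Lemma~\ref{lem:2gen}, if one vertex of a connected component of $\cG$ lies in this image, then every vertex of that component has the same generated subgroup and hence also lies in the image. Therefore the image is a union of entire connected components, and the induced subgraph on a union of whole components is the same as the disjoint union of those components; no edges are lost. This establishes that $\cG'$ is isomorphic to a union of connected components of $\cG$.

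Finally, for the ``in particular'' clause I would take an arbitrary connected component $\cC$ of $\cG$, pick any vertex $\orbt{a,b}\in\cC$, and set $A'=\langle a,b\rangle$, a subgroup generated by two elements. By Lemma~\ref{lem:2gen} every vertex of $\cC$ has its generating subgroup equal to $A'$, so $\cC$ lies entirely within the copy of $\cG'$ inside $\cG$; since the first part shows $\cG'$ is a union of components, $\cC$ is one of the components of $\cG'$, giving the claim. I expect the main obstacle to be the careful verification that the defining orbit structures and the incidence relation are genuinely preserved under passage to the subgroup---that is, confirming that no new coincidences among $\orbt{\cdot,\cdot}$ occur in the larger group $A$ that would collapse distinct vertices of $\cG'$, and conversely that the adjacency formulas of Lemma~\ref{lem:nbs} transfer verbatim. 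This is where one must use that $A'$ is a subgroup, so that all the relevant equalities and non-equalities are decided identically in $A'$ and in $A$.
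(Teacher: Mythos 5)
Your proposal is correct and follows essentially the same route as the paper: an injective map $\phi(\orbt{a,b}')=\orbt{a,b}$ justified by (\ref{eq:T0}) and Lemma~\ref{lem:cE}, preservation of adjacency via Lemma~\ref{lem:nbs}, and Lemma~\ref{lem:2gen} to identify a given component of $\cG$ with a component of the K\"ohler graph of $\langle a,b\rangle$. The only cosmetic difference is that the paper deduces ``union of components'' directly from the fact that $\phi$ carries the full neighbor set of $\orbt{a,b}'$ onto the full neighbor set of $\orbt{a,b}$, reserving Lemma~\ref{lem:2gen} for the ``in particular'' clause, whereas you also invoke it to saturate the image.
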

\begin{proof}
For distinct nonzero elements $a_1,\dots,a_t\in A'$, 
we abbreviate
\[
\orb{\hat{A'}}{\{0,a_1,\dots,a_t\}}=\orbt{a_1,\dots,a_t}'.
\]
Let $\cT(A')$, $\cE(A')$ denote the sets
defined by (\ref{eq:defT}), (\ref{eq:defE}), respectively,
for the group $A'$.
By (\ref{eq:T0}) and Lemma~\ref{lem:cE}(i), we have an injective
mapping $\phi:\cT(A')\to\cT$ defined by $\phi(\orbt{a,b}')=
\orbt{a,b}$. 
We claim that $\phi$ is an isomorphism from the K\"ohler graph
of $A'$ onto a union of some connected components of $\cG$. 
Indeed, for $\orbt{a,b}'\in\cT(A')$, Lemma~\ref{lem:nbs} implies
that the neighbors of
$\orbt{a,b}'$ are 
\begin{equation}\label{eq:L3.8a}
\{\orbt{a,a+b}',\orbt{a,b-a}',\orbt{b,a-b}'\}\cap\cT(A').
\end{equation}
The mapping $\phi$ sends (\ref{eq:L3.8a}) to
(\ref{eq:L3.2a}) which is the set of neighbors of $\orbt{a,b}$
in $\cG$ by Lemma~\ref{lem:nbs}. This proves the claim.

Let $\cC\subset\cT$ be a connected component of $\cG$, and let
$\orbt{a,b}\in\cC$. Take $A'$ to be the subgroup of $A$
generated by $a,b$. Then Lemma~\ref{lem:2gen} implies 
$\cC\subset\phi(\cT(A'))$. Since $\phi$ is an isomorphism,
$\cC$ is isomorphic to a connected component of the K\"ohler
graph of $A'$.
\end{proof}

\begin{lem}\label{lem:edgecyc}
Suppose $a,b\in A$ satisfy
\begin{equation}\label{eq:2ab}
2a\notin\langle b\rangle\text{ and }
2b\notin\langle a\rangle.
\end{equation}
Then there exists a cycle containing the edge
$\orbt{a,b,a+b}$ in the K\"ohler graph of $A$.
\end{lem}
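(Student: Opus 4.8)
The plan is to build an explicit closed walk through the edge $\orbt{a,b,a+b}$ --- call it $e$ --- and then invoke the standard fact that an edge which is not a bridge lies on a cycle. First I would record that $e$ really is an edge: the two displayed hypotheses give $0\notin\{2a,2b\}$ and $\{\pm a,\pm2a\}\cap\{\pm b,\pm2b\}=\emptyset$, so $\orbt{a,b,a+b}\in\cE$ by Lemma~\ref{lem:cE}(ii), and by Lemma~\ref{lem:blks} its endpoints are the two distinct vertices $\orbt{a,b}$ and $\orbt{a,a+b}$.

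The central device is what I will call the \emph{$a$-ladder}. For $j\in\Z$ set $Q_j=\{0,a,ja+b,(j+1)a+b\}$; since $a+(ja+b)=(j+1)a+b$, this is a parallelogram, and whenever $\Pi_j:=\orbt{a,ja+b,(j+1)a+b}$ lies in $\cE$, its two endpoints are $W_j:=\orbt{a,ja+b}$ and $W_{j+1}=\orbt{a,(j+1)a+b}$. Here $W_0=\orbt{a,b}$, $W_1=\orbt{a,a+b}$ and $\Pi_0=e$. Writing $r$ for the order of $a$, we have $W_{j+r}=W_j$, so $W_0,W_1,\dots,W_{r-1},W_r=W_0$ is a closed walk of length $r$ traversing $e$ as its first edge.

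I would then carry out two verifications, both driven by the hypotheses. First, every $\Pi_j$ lies in $\cE$: checking Lemma~\ref{lem:cE}(ii) for the generators $a$ and $ja+b$, each potential coincidence among $\{\pm a,\pm2a\}$ and $\{\pm(ja+b),\pm2(ja+b)\}$, as well as $2(ja+b)=0$, forces $2b\in\langle a\rangle$, contrary to hypothesis; moreover $2a\neq0$ gives $r\ge3$. Second, for $1\le j\le r-1$ the edge $\Pi_j$ differs from $e=\Pi_0$: using the description~(\ref{eq:B0a}), the members of $\orbt{a,b,a+b}$ containing $0$ are $\{0,a,b,a+b\}$, $\{0,-a,b,b-a\}$, $\{0,a,-b,a-b\}$, $\{0,-a,-b,-a-b\}$, and $\Pi_j=\Pi_0$ would force $Q_j$ to equal one of these. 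Matching $Q_j$ against the last three forces $2a=0$ or $2b\in\langle a\rangle$, while matching against $\{0,a,b,a+b\}$ itself forces $ja=0$ or $r\mid2$; all are impossible for $1\le j\le r-1$ since $r\ge3$.

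Given these two verifications, $W_1,W_2,\dots,W_{r-1},W_0$ is a genuine walk in $\cG$ from $W_1$ to $W_0$ none of whose edges is $e$, so $W_0$ and $W_1$ remain in the same connected component of $\cG-e$. Hence $e$ is not a bridge, and since $\cG$ has no multiple edges (Section~2) any such edge lies on a cycle; that cycle contains $e$, as required. The main obstacle is the second verification: it is precisely the statement that the ladder does not secretly double back along $e$, and it is there that the hypotheses $2a\notin\langle b\rangle$ and $2b\notin\langle a\rangle$ are genuinely needed, beyond merely guaranteeing that $e$ is an edge.
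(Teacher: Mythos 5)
Your proposal is correct and follows essentially the same route as the paper: both build the ladder of edges $\orbt{a,ta+b,(t+1)a+b}$ through $\orbt{a,b,a+b}$ and verify via Lemma~\ref{lem:cE}(ii) that each lies in $\cE$ using the hypothesis (\ref{eq:2ab}). The only difference is the endgame: the paper shows the vertices $\orbt{a,ta+b}$, $0\le t<|\langle a\rangle|$, are pairwise distinct, yielding an explicit cycle of length $|\langle a\rangle|\ge 3$, whereas you show only that the remaining edges of the closed walk differ from $e$ and then invoke the bridge criterion --- a slightly weaker but equally sufficient conclusion.
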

\begin{proof}
Let
\[
\cC= \{\orbt{a,ta+b,(t+1)a+b} \mid t \in \Z\}.
\]
We claim that for any integer $t$, 
\begin{equation}\label{eq:orbBt}
\orbt{a,ta+b,(t+1)a+b} \in \cE.
\end{equation}
By (\ref{eq:defE}), this is equivalent to
\[
0\notin \{2a,2(ta+b)\}\text{ and }
\{a,2a\}\cap \{\pm(ta+b),\pm2(ta+b)\} = \emptyset,
\]
whose validity follows from the assumption (\ref{eq:2ab}).
This establishes (\ref{eq:orbBt}). 

Observe that, for $s,t\in\Z$, we have
\begin{align*}
&\orbt{a,ta+b}=\orbt{a,sa+b}
\\ &\iff
\{a,ta+b\}\in\{\{a,sa+b\},\{-a,(s-1)a+b\},
\\ &\qquad\qquad\qquad\qquad
 \{-sa-b,(1-s)a-b\},\{-a,-sa-b\},
\\ &\qquad\qquad\qquad\qquad
\{a,(1-s)a-b\},\{sa+b,(s-1)a+b\}\}
&&\text{(by (\ref{eq:T0}))}
\nexteqv
ta=sa
&&\text{(by (\ref{eq:2ab})).}
\end{align*}
Hence, by Lemma~\ref{lem:blks}, 
\[
\{\orbt{a,ta+b} \mid 0 \le t< |\langle a\rangle|\}
\]
is a set of $|\langle a\rangle|$ distinct vertices of $\cG$ 
which form a cycle of length $|\langle a \rangle|\ge3$. 
\end{proof}

\section{Special orbits of triples and quadruples}
Throughout this section, we
let $A$ be an abelian group
of order $v\equiv2$ or $4\pmod6$.
Let
\begin{align*}
\Omega_1(A)&=\{a\in A\mid 2a=0\},
&\omega_1&=|\Omega_1(A)|,\\
\Omega_2(A)&=\{a\in A\mid 4a=0\},
&\omega_2&=|\Omega_2(A)|.
\end{align*}
Let $\qbinom{\Omega_1(A)}{2}$ denote the set of all
subgroups of order $4$ in $\Omega_1(A)$. Since
$\Omega_1(A)$ is an elementary abelian $2$-group,
$\Omega_1(A)$ can be regarded as a vector space over 
the finite field $\F_2$ of two elements.
Then $\qbinom{\Omega_1(A)}{2}$ is just the set
of $2$-dimensional subspaces of $\Omega_1(A)$, and by
\cite{Dem}[p.28],
\begin{equation}\label{eq:qbin}
\left|\qbinom{\Omega_1(A)}{2}\right|
=\frac{(\omega_1-1)(\omega_1-2)}{6}.
\end{equation}

Let
\begin{align}
\cT_1&=\{\orbt{a,-a}\mid a\in A\setminus\Omega_1(A)\},
\label{eq:T1}\\
\cT_2&=\{\orbt{a,h}\mid 
a\in A\setminus\{0\},\;h\in\Omega_1(A)\setminus
\{0,a\} \}.\label{eq:T2}
\end{align}
By (\ref{eq:T0}), if $b=-a\neq a$, then
\begin{equation}\label{eq:R1}
\{X\mid \{0\}\cup X\in\orbt{a,-a},\;0\notin X\}=
\{\{a,-a\},\{a,2a\},\{-a,-2a\}\},
\end{equation}
and if $a\neq 0$ and $h\in\Omega_1(A)\setminus\{0,a\}$, then
\begin{equation}\label{eq:SS}
\{X\mid \{0\}\cup X\in\orbt{a,h},\;0\notin X\}=
\{\pm\{a,h\},\pm\{a,h+a\},\pm\{h,h+a\}\}.
\end{equation}

Fix an element $h_0$ of order $2$ in $A$. Set
\begin{align}
\cQ_1 &= \{\orbt{a,-a,h_0}\mid a\in A\setminus\Omega_1(A)\},
\label{eq:Q1}\\
\cQ_2 &= \{\orbt{a,h,h+a} \mid a\in A\setminus\Omega_1(A)\;,
h\in\Omega_1(A)\setminus\langle h_0\rangle,\;2a\neq h\},
\label{eq:Q2}\\
\cQ_3 &= \{\orbt{h,h',h+h'}\mid h,h'\in 
\Omega_1(A)\setminus\{0\},\; h\neq h'\}.
\label{eq:Q3}
\end{align}
If $\{0,a,-a,h\}\in\binom{A}{4}$ and $h\in\Omega_1(A)$, then
\begin{equation}\label{eq:B0b}
\begin{split}
&\{Y\mid \{0\}\cup Y\in\orbt{a,-a,h},\;0\notin Y\}
\\ &=
\{\{a,-a,h\},\{h+a,h-a,h\},
\\ &\qquad
\{a,2a,h+a\},\{-a,-2a,h-a\}\}.
\end{split}
\end{equation}

\begin{lem}\label{lem:T}
We have $\binom{A}{3}/\hA=\cT_1\cup\cT_2\cup\cT$,
and $\cT\cap(\cT_1\cup\cT_2)=\emptyset$,
where $\binom{A}{3}/\hA$ is the orbit
decomposition of $\binom{A}{3}$ under $\hA$.
\end{lem}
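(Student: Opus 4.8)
The plan is to prove the two assertions in turn, in both cases reducing an arbitrary $\hA$-orbit to a representative $3$-subset through $0$ and then invoking the explicit orbit descriptions (\ref{eq:T0}), (\ref{eq:R1}), (\ref{eq:SS}) together with the characterization of $\cT$ in Lemma~\ref{lem:cE}(i).

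For the equality $\binom{A}{3}/\hA=\cT_1\cup\cT_2\cup\cT$, the inclusion $\supseteq$ is immediate, since each of $\cT_1$, $\cT_2$, $\cT$ is by definition a set of $\hA$-orbits of $3$-subsets. For $\subseteq$, I would take any orbit and, using that $\hA$ contains all translations, choose a representative $\{0,a,b\}$ with $a,b$ distinct and nonzero. If the three inequalities (\ref{eq:abT}) hold, then $\orbt{a,b}\in\cT$ by Lemma~\ref{lem:cE}(i). Otherwise some inequality fails, and I would dispose of the possibilities one at a time: $a=-b$, $b=2a$, or $a=2b$ place the orbit in $\cT_1$ by (\ref{eq:R1}) (noting that $2a\neq0$, resp.\ $2b\neq0$, since otherwise a point would coincide with $0$); $2a=0$ or $2b=0$ place it in $\cT_2$ directly, because then one of the points lies in $\Omega_1(A)\setminus\{0\}$; and $2a=2b$ gives $b-a\in\Omega_1(A)\setminus\{0\}$, so translating by $-a$ shows the orbit equals $\orbt{-a,b-a}\in\cT_2$. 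Each subcase is a one-line check that the defining conditions of $\cT_1$ or $\cT_2$ are met.

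For the disjointness $\cT\cap(\cT_1\cup\cT_2)=\emptyset$, I would start from $\orbt{a,b}\in\cT$, so that (\ref{eq:abT}) holds and in particular $2a\neq0$ and $2b\neq0$. To exclude $\cT_1$, suppose $\orbt{a,b}=\orbt{c,-c}$ with $2c\neq0$; by (\ref{eq:R1}) the pair $\{a,b\}$ is one of $\{c,-c\}$, $\{c,2c\}$, $\{-c,-2c\}$, each of which forces $a=-b$, $b=2a$, or $a=2b$, contradicting (\ref{eq:abT}). To exclude $\cT_2$, suppose $\orbt{a,b}=\orbt{c,h}$ with $h\in\Omega_1(A)\setminus\{0,c\}$; by (\ref{eq:SS}) the pair $\{a,b\}$ lies in $\pm\{c,h\}$, $\pm\{c,c+h\}$, or $\pm\{h,c+h\}$. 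Here the key observation is that $2h=0$, so the four pairs containing $h$ force one of $a,b$ to have order $2$, contradicting $2a\neq0$, $2b\neq0$, while the two pairs $\pm\{c,c+h\}$ satisfy $2c=2(c+h)$ and hence force $2a=2b$, again contradicting (\ref{eq:abT}).

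The main obstacle I anticipate is the exclusion of $\cT_2$: the six sign-and-translate representatives in (\ref{eq:SS}) must all be checked, and the clean route is the single identity $2(c+h)=2c$, which collapses the mixed cases to the forbidden relation $2a=2b$. The remaining steps are routine bookkeeping against the inequalities of Lemma~\ref{lem:cE}(i), with the standing care that all three points of a representative triple be distinct and nonzero.
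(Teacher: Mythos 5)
Your proposal is correct and follows essentially the same route as the paper: the paper also reduces each orbit to a representative $\{0,a,b\}$ and uses (\ref{eq:R1}) and (\ref{eq:SS}) to show that $\orbt{a,b}\in\cT_1$ iff $a=-b$, $a=2b$ or $2a=b$, and $\orbt{a,b}\in\cT_2$ iff $2a=0$, $2b=0$ or $2a=2b$, the union of these conditions being exactly the negation of (\ref{eq:abT}). Your splitting into the two inclusions and the disjointness is just a reorganization of the paper's two ``iff'' chains, with the same case analysis (including the key observation $2(c+h)=2c$).
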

\begin{proof}
Let $a,b\in A\setminus\{0\}$ be distinct. Then by 
(\ref{eq:R1}), 
\begin{align*}
\orbt{a,b}\in\cT_1
&\iff
\exists c\in A\text{ s.t. }
\{a,b\}\in\{\{c,-c\},\{c,2c\},\{-c,-2c\}\}
\displaybreak[0]\\ &\iff
a=-b,\;a=2b\text{ or }2a=b,
\end{align*}
and by (\ref{eq:SS}),
\begin{align*}
\orbt{a,b}\in\cT_2
&\iff
\exists c\in A,\;h\in\Omega_1(A)\text{ s.t. }
\\ &\qquad\qquad
\{a,b\}\in\{\pm\{c,h\},\pm\{h,h+c\},\pm\{c,h+c\}\}
\displaybreak[0]\\ &\iff
2a=0,\;2b=0\text{ or }2a=2b.
\end{align*}
Combining these we conclude $\orbt{a,b}\notin\cT_1\cup\cT_2$
if and only if $\orbt{a,b}\in\cT$.
\end{proof}

\begin{lem}\label{lem:R1}
Let $a,b\in A\setminus\Omega_1(A)$. Then
\begin{enumerate}
\item
$\orbt{a,-a}=\orbt{b,-b}$
if and only if
$a=\pm b$,
\item
$\orbt{a,-a,h_0}=\orbt{b,-b,h_0}$
if and only if
$b\in\{\pm a,h_0\pm a\}$.
\end{enumerate}
\end{lem}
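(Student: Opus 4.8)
The plan is to detect equality of $\hA$-orbits using the explicit representative lists already computed in (\ref{eq:R1}) and (\ref{eq:B0b}): two orbits coincide exactly when a representative of one appears among the listed representatives of the other. Both ``if'' directions I expect to dispatch directly from the definition of $\orb{\hA}{\cdot}$. For (i), $b=-a$ gives $\{0,b,-b\}=\{0,a,-a\}$. For (ii), I would use $-h_0=h_0$: translating $\{0,h_0+a,h_0-a,h_0\}$ by $-h_0$ returns $\{0,a,-a,h_0\}$, so each of $b=\pm a,\,h_0\pm a$ produces a quadruple lying in the $A$-orbit of $\{0,a,-a,h_0\}$, whence $\orbt{b,-b,h_0}=\orbt{a,-a,h_0}$.

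For the ``only if'' part of (i), I would note that $\orbt{a,-a}=\orbt{b,-b}$ forces $\{b,-b\}\in\{\{a,-a\},\{a,2a\},\{-a,-2a\}\}$ by (\ref{eq:R1}). The first option gives $b=\pm a$; each of the other two yields $3a=0$, which is impossible since $a\neq0$ and $v=|A|\not\equiv0\pmod3$ means $A$ has no element of order $3$. This is the only point where the hypothesis on $v$ modulo $3$ is needed.

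For the ``only if'' part of (ii), the plan is to locate $\{b,-b,h_0\}$ among the four representatives in (\ref{eq:B0b}). The organizing idea I would rely on is that $a,b\notin\Omega_1(A)$ makes $a,-a,b,-b$ all of order exceeding $2$, so $h_0$ is the \emph{unique} order-$2$ element of $\{b,-b,h_0\}$; matching order-$2$ elements then fixes the correspondence in each case and reduces it to a two-element equation. Against $\{a,-a,h_0\}$ and $\{h_0+a,h_0-a,h_0\}$ (which again have $h_0$ as sole order-$2$ element, since $2(h_0\pm a)=2a\neq0$) I would read off $\{b,-b\}=\{a,-a\}$ or $\{b,-b\}=\{h_0+a,h_0-a\}$ directly, i.e. $b\in\{\pm a,\,h_0\pm a\}$.

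The step I expect to be the main obstacle is showing that the two ``diagonal'' representatives $\{a,2a,h_0+a\}$ and $\{-a,-2a,h_0-a\}$ cannot produce a value of $b$ outside $\{\pm a,\,h_0\pm a\}$. Here the order-$2$ bookkeeping does the work: such a representative contains an order-$2$ element at all only when $2a=h_0$, forcing $a$ to have order $4$; then $h_0+a=3a=-a$ and the representative collapses to $\{a,-a,h_0\}$, returning $b=\pm a$. Once this collapse is verified, the four cases together give $b\in\{\pm a,\,h_0\pm a\}$ and the proof is complete.
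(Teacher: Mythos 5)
Your proposal is correct and follows essentially the same route as the paper: both reduce orbit equality to membership of $\{b,-b\}$ (resp.\ $\{b,-b,h_0\}$) in the representative lists (\ref{eq:R1}) and (\ref{eq:B0b}), use $v\not\equiv0\pmod 3$ to kill the cases forcing $3a=0$ in (i), and observe that the diagonal representatives in (ii) can only occur when $2a=h_0$, in which case they collapse to $\{a,-a,h_0\}$. Your ``unique order-$2$ element'' bookkeeping is just a tidier way of organizing the same case analysis (with one harmless imprecision: $\{a,2a,h_0+a\}$ contains an involution whenever $a$ has order $4$; what the matching actually requires is that this involution be $h_0$, i.e.\ $2a=h_0$).
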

\begin{proof}
(i) By (\ref{eq:R1}),
\begin{align*}
\orbt{a,-a}=\orbt{b,-b}
&\iff
\{b,-b\}\in\{\{a,-a\},\{a,2a\},\{-a,-2a\}\}
\\ &\iff
a=\pm b,
\end{align*}
since $v\neq0\pmod{3}$.

(ii)
By (\ref{eq:B0b}),
\begin{align*}
&\orbt{a,-a,h_0}=\orbt{b,-b,h_0}
\\ &\iff
\{b,-b,h_0\}\in\{\{a,-a,h_0\},\{h_0+a,h_0-a,h_0\},
\\ &\qquad\qquad\qquad\qquad\quad
\{a,2a,h_0+a\},\{-a,-2a,h_0-a\}\}
\\ &\iff
b\in\{\pm a,h_0\pm a\}
\\ &\qquad\quad
\text{or } h_0=2a\text{ and } \{b,-b\}
\in\{\{a,3a\},\{-a,a\}\}
\\ &\iff
b\in\{\pm a,h_0\pm a\}.
\end{align*}
\end{proof}

\begin{lem}\label{lem:R2}
Let $a\in A\setminus\Omega_1(A)$, $b\in A\setminus\{0\}$
and $h\in\Omega_1(A)\setminus\{0,b\}$. Then
\begin{enumerate}
\item
$\orbt{a,-a}=\orbt{b,h}$
if and only if
$a=\pm b$ and $2a=h$. In particular, $\orbt{a,-a}\in\cT_2$
if and only if $a\in\Omega_2(A)$.
\item
$\orbt{a,-a,h_0}=\orbt{b,h,h+b}$
if and only if
$a=\pm b$ and $2a=h_0=h$.
\end{enumerate}
\end{lem}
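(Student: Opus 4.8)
The plan is to reduce both equivalences to the explicit orbit descriptions already recorded in (\ref{eq:R1}), (\ref{eq:SS}) and (\ref{eq:B0b}), exploiting the principle used throughout this section: if $0\in X\cap Y$, then $\orb{\hA}{X}=\orb{\hA}{Y}$ if and only if $Y\setminus\{0\}$ belongs to the derived set $\{Z\mid\{0\}\cup Z\in\orb{\hA}{X},\ 0\notin Z\}$. This holds because an $\hA$-orbit is a union of sets of equal size and both $\{0,b,h\}$ and $\{0,b,h,h+b\}$ already contain $0$.

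For part (i) I would take $X=\{0,a,-a\}$ and $Y=\{0,b,h\}$. By (\ref{eq:R1}) the derived set of $\orbt{a,-a}$ is $\{\{a,-a\},\{a,2a\},\{-a,-2a\}\}$, so $\orbt{a,-a}=\orbt{b,h}$ is equivalent to $\{b,h\}$ being one of these three pairs. Since $a\notin\Omega_1(A)$ forces $a\neq-a$ and $2a\neq0$, the pair $\{a,-a\}$ is excluded because neither entry lies in $\Omega_1(A)$ while $h$ does; in the remaining two pairs the order-$2$ element $h$ cannot be $\pm a$, hence $h=2a$ (using $2h=0$ to identify $-2a$ with $2a$), and correspondingly $b=\pm a$. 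The ``in particular'' clause then follows: $\orbt{a,-a}\in\cT_2$ means exactly that some admissible $(b,h)$ realizes the equality, and $h=2a\in\Omega_1(A)$ is equivalent to $4a=0$, i.e.\ $a\in\Omega_2(A)$; conversely, for $a\in\Omega_2(A)\setminus\Omega_1(A)$ the pair $(b,h)=(a,2a)$ satisfies all constraints.

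For part (ii) I would take $X=\{0,a,-a,h_0\}$ and $Y=\{0,b,h,h+b\}$, first noting that $\{0,a,-a,h_0\}\in\binom{A}{4}$ (all four elements are distinct because $a\notin\Omega_1(A)$), so (\ref{eq:B0b}) applies. Equality of orbits reduces to $\{b,h,h+b\}$ being one of the triples
\[
\{a,-a,h_0\},\quad \{h_0+a,h_0-a,h_0\},\quad \{a,2a,h_0+a\},\quad \{-a,-2a,h_0-a\}.
\]
The efficient way to run the case analysis is to count elements lying in $\Omega_1(A)$: the triple $\{b,h,h+b\}$ has exactly one such element ($h$, since $2(h+b)=2b$) when $b\notin\Omega_1(A)$, and all three when $b\in\Omega_1(A)$, whereas each target triple has at most one, namely $h_0$ (cases $1,2$, as $2(h_0\pm a)=\pm2a\neq0$) or $2a$ (cases $3,4$, and only when $4a=0$). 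This forces $b\notin\Omega_1(A)$, identifies $h$ with the unique $\Omega_1(A)$-element of the matching triple, and in cases $3,4$ already yields $4a=0$. Matching the two remaining entries $\{b,h+b\}$ against the non-$\Omega_1(A)$ pair then pins down $b$ and produces the relation $2a=h_0$ (hence again $4a=0$), after which $4a=0$ collapses $h_0+a=3a$ to $-a$ and $-2a$ to $2a$, giving $b=\pm a$ and $h=h_0=2a$ in every surviving case. The converse is a one-line check: for $b=\pm a$ and $h=h_0=2a$ one has $\{0,b,h,h+b\}=\{0,a,-a,2a\}=\{0,a,-a,h_0\}$, so the orbits coincide.

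The main obstacle I anticipate is the bookkeeping in part (ii): the four target triples look genuinely different, yet once $4a=0$ is in force they become entangled (for instance $h_0+a=-a$), so I would be careful to \emph{derive} $4a=0$ from the $\Omega_1(A)$-count or from $2a=h_0$ before simplifying, and to exploit the $a\leftrightarrow-a$ symmetry so that cases $(1,2)$ and $(3,4)$, and the subcases $b=a$ versus $b=-a$, are handled in parallel rather than recomputed.
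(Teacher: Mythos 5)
Your proposal is correct and follows essentially the same route as the paper: both parts reduce the orbit equality to membership of $\{b,h\}$ (resp. $\{b,h,h+b\}$) in the explicit derived sets (\ref{eq:R1}) and (\ref{eq:B0b}), and then identify $h$ as the unique element of order $2$ in the matching set, exactly as the paper does implicitly. Your $\Omega_1(A)$-counting is just a cleaner bookkeeping device for the same four-case analysis, and the verification of the ``in particular'' clause matches the paper's.
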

\begin{proof}
(i) By (\ref{eq:R1}),
\begin{align*}
\orbt{a,-a}=\orbt{b,h}
&\iff
\{b,h\}\in\{\{a,-a\},\{a,2a\},\{-a,-2a\}\}
\\ &\iff
(b,h)\in\{(a,2a),(-a,-2a)\}
\\ &\iff
a=\pm b\text{ and }2a=h.
\end{align*}
Thus
\begin{align*}
\orbt{a,-a}\in\cT_2
&\iff
\exists b\in A\setminus\{0\},\;
\exists h\in\Omega_1(A)\setminus\{0,b\}
\\ &\qquad\quad\text{ s.t. }
a=\pm b\text{ and }h=2a
\\ &\iff
a\in\Omega_2(A)\text{ and }
\exists b\in\{\pm a\}\setminus\{0,2a\}
\\ &\iff
a\in\Omega_2(A),
\end{align*}
since $\{\pm a\}\cap\{0,2a\}=\emptyset$ when
$a\in\Omega_2(A)\setminus\Omega_1(A)$.

(ii) By (\ref{eq:B0b}),
\begin{align*}
&\orbt{a,-a,h_0}=\orbt{b,h,h+b}
\\ &\iff
\{b,h,h+b\}\in\{\{a,-a,h_0\},\{h_0+a,h_0-a,h_0\},
\\ &\qquad\qquad\qquad\qquad\qquad
\{a,2a,h_0+a\},\{-a,-2a,h_0-a\}\}
\\ &\iff
h_0=h\text{ and }\{b,h_0+b\}\in\{\{a,-a\},\{h_0+a,h_0-a\}\}
\\ &\qquad\quad\text{or }
2a=h\text{ and } \{b,h+b\}\in\{\pm\{a,h_0+a\}\}
\\ &\iff
h_0=h,\;b\in\{\pm a,h_0\pm a\}\text{ and }2a=h_0,
\\ &\qquad\quad\text{or }
2a=h,\;b\in\{\pm a,h_0\pm a\}\text{ and }h_0=h
\\ &\iff
a=\pm b\text{ and } 2a=h_0=h.
\end{align*}
\end{proof}

\begin{lem}\label{lem:R3}
Let $a,a'\in A\setminus\{0\}$, $h\in\Omega_1(A)\setminus\{0,a\}$,
and $h'\in\Omega_1(A)\setminus\{0,a'\}$.
Then the following are equivalent:
\begin{enumerate}
\item
$\orbt{a,h}=\orbt{a',h'}$,
\item
$\orbt{a,h,h+a}=\orbt{a',h',h'+a'}$,
\item 
\begin{equation}\label{eq:T2_1}
a,a'\in\Omega_1(A)
\text{ and }\langle a,h\rangle=\langle a',h'\rangle,
\end{equation}
or
\begin{equation}\label{eq:T2_2}
a,a'\notin\Omega_1(A),\;
h=h'
\text{ and }
a'\in\{\pm a,h\pm a\}.
\end{equation}
\end{enumerate}
\end{lem}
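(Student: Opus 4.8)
The plan is to prove the chain of equivalences by showing (i)$\iff$(iii) first, then (ii)$\iff$(iii), so that (i)$\iff$(ii) follows. Both halves rest on the explicit orbit descriptions already computed in the text, namely (\ref{eq:SS}) for the triples $\orbt{a,h}$ and the corresponding quadruple expansion implicit in (\ref{eq:B0a}). The overall strategy is identical to that of Lemma~\ref{lem:R1} and Lemma~\ref{lem:R2}: reduce an orbit equality to a finite membership condition among explicit sets, then split into the cases $a\in\Omega_1(A)$ and $a\notin\Omega_1(A)$.

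First I would handle (i)$\iff$(iii). By (\ref{eq:SS}) the equality $\orbt{a,h}=\orbt{a',h'}$ is equivalent to $\{a',h'\}$ lying in the six-element set $\{\pm\{a,h\},\pm\{a,h+a\},\pm\{h,h+a\}\}$. Since $h,h'\in\Omega_1(A)$ are fixed by $\sigma$, I would sort these six sets according to which coordinate is forced to be the order-$2$ element $h'$. When $a\in\Omega_1(A)$, the whole triple $\{0,a,h\}$ lies in $\Omega_1(A)$ and in fact spans the $2$-dimensional subspace $\langle a,h\rangle$; the equality of orbits then forces $a'\in\Omega_1(A)$ as well and $\langle a,h\rangle=\langle a',h'\rangle$, giving (\ref{eq:T2_1}). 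When $a\notin\Omega_1(A)$, the element $a$ has order $>2$, so matching the order-$2$ coordinate pins down $h'=h$ and leaves $a'\in\{\pm a,h+a,\,\text{(and its negative)}\}=\{\pm a,h\pm a\}$ (using $h=-h$), which is exactly (\ref{eq:T2_2}). The converse direction in each case is a direct substitution back into (\ref{eq:SS}).

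Next I would treat (ii)$\iff$(iii). The quadruple $\{0,a,h,h+a\}$ is precisely the translate structure appearing in (\ref{eq:B0a}) with second generator $h$; because $2h=0$, the four members of $\orbt{a,h,h+a}$ listed by (\ref{eq:B0a}) collapse in a way parallel to (\ref{eq:B0b}). I would write out these members explicitly and compare with those of $\orbt{a',h',h'+a'}$, again splitting on whether $a\in\Omega_1(A)$. In the elementary-abelian case all of $a,h,h+a$ are involutions, so the quadruple is the affine plane $\{0\}\cup(\langle a,h\rangle\setminus\{0\})$, and orbit equality is equivalent to $\langle a,h\rangle=\langle a',h'\rangle$. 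In the remaining case, identifying the unique order-$2$ element among the nonzero differences forces $h=h'$ and $a'\in\{\pm a,h\pm a\}$.

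I expect the main obstacle to be the case $a\in\Omega_1(A)$, where (\ref{eq:SS}) and (\ref{eq:B0a}) lose their generic genericity: several of the six or four listed sets coincide, and one must argue that orbit equality is governed by the subgroup $\langle a,h\rangle$ rather than by an individual coordinate match. The cleanest way to dispatch this is to observe that when $a,h\in\Omega_1(A)$ the set $\{0,a,h,h+a\}$ is a coset-free copy of $\Z_2^2$, so any element of $\hA$ carrying one such quadruple to another restricts to a linear isomorphism of the spanned plane; then (i), (ii) and the subgroup condition (\ref{eq:T2_1}) all express the same equality of $2$-dimensional subspaces. The non-involution case is routine coordinate bookkeeping along the lines of Lemma~\ref{lem:R2}, so I would keep it brief and concentrate the write-up on the degenerate case.
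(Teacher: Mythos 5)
Your proposal is correct and follows essentially the same route as the paper's proof: reduce both orbit equalities to explicit finite membership conditions via (\ref{eq:SS}) and (\ref{eq:B0a}), note that either equality forces $a\in\Omega_1(A)\iff a'\in\Omega_1(A)$, and then split into the cases $a,a'\in\Omega_1(A)$ (where everything reduces to $\langle a,h\rangle=\langle a',h'\rangle$) and $a,a'\notin\Omega_1(A)$ (where matching the unique order-$2$ coordinate yields $h=h'$ and $a'\in\{\pm a,h\pm a\}$). The only difference is organizational: the paper runs a single chain of equivalences from (i) through (iii) to (ii) within each case, whereas you prove (i)$\iff$(iii) and (ii)$\iff$(iii) separately, which amounts to the same computations.
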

\begin{proof}
Suppose $a\in\Omega_1(A)$. Then 
$\orbt{a,h}=\orbt{a',h'}$ or $\orbt{a,h,h+a}=\orbt{a',h',h'+a'}$
implies $a'\in\Omega_1(A)$ by (\ref{eq:SS}) or (\ref{eq:B0a}).
If $a,a'\in\Omega_1(A)$, then
\begin{align*}
&\orbt{a,h}=\orbt{a',h'}
\\ &\iff
\{a',h'\}\in\{\{a,h\},\{h,h+a\},\{a,h+a\}\}
&&\text{(by (\ref{eq:SS}))}
\\ &\iff
\langle a,h\rangle=\langle a',h'\rangle
\\ &\iff
\{a,h,h+a\}=\{a',h',h'+a'\}
\\ &\iff
\orbt{a,h,h+a}=\orbt{a',h',h'+a'}
&&\text{(by (\ref{eq:B0a})).}
\end{align*}
Now suppose $a,a'\not\in\Omega_1(A)$. Then
\begin{align*}
&\orbt{a,h}=\orbt{a',h'}
\\ &\iff
\{a',h'\}\in\{\pm\{a,h\},\pm\{h,h+a\},\pm\{a,h+a\}\}
&&\text{(by (\ref{eq:SS}))}
\\ &\iff
\{a',h'\}\in\{\pm\{a,h\},\pm\{h,h+a\}\}
\\ &\iff
h=h'\text{ and } a'\in\{\pm a,h\pm a\}
\\ &\iff
h=h'\text{ and } \{a',h'+a'\}=\pm\{a,h+a\}
\\ &\iff
\{a',h',h'+a'\}=\pm\{a,h,h+a\}
\\ &\iff
\orbt{a,h,h+a}=\orbt{a',h',h'+a'}
&&\text{(by (\ref{eq:B0a})).}
\end{align*}
\end{proof}

\begin{lem}\label{lem:2T1}
Let $a\in A\setminus\Omega_1(A)$, $T\in\orbt{a,-a}$
and $B\in\orbt{a,-a,h_0}$. Then
\begin{align}
&|\{a'\in A\setminus\Omega_2(A)\mid T\in\orbt{a',-a'}\}|
=2\quad\text{if $a\notin\Omega_2(A)$,}\label{eq:NT3}\\
&|\{a'\in A\setminus\Omega_1(A)\mid 2a'=h_0,\;
B\in\orbt{a',-a',h_0}\}|
=2\quad\text{if $2a=h_0$,}\label{eq:NB1}\\
&|\{a'\in A\setminus\Omega_1(A)\mid 2a'\neq h_0,\;
B\in\orbt{a',-a',h_0}\}|
=4\quad\text{if $2a\neq h_0$.}\label{eq:NB2}
\end{align}
\end{lem}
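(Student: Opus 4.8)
The plan is to turn each of the three counts into a question about orbit equality and then read off the answer from Lemma~\ref{lem:R1}. The key preliminary remark is that the $\hA$-orbits of triples partition $\binom{A}{3}$ and the $\hA$-orbits of quadruples partition $\binom{A}{4}$; since $T\in\orbt{a,-a}$ and $B\in\orbt{a,-a,h_0}$ by hypothesis, the condition $T\in\orbt{a',-a'}$ is equivalent to $\orbt{a,-a}=\orbt{a',-a'}$, and $B\in\orbt{a',-a',h_0}$ is equivalent to $\orbt{a,-a,h_0}=\orbt{a',-a',h_0}$. In particular none of the three counts depends on the chosen representatives $T$ and $B$, and each reduces to enumerating the $a'$ that produce the same orbit.

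For (\ref{eq:NT3}) this is immediate: Lemma~\ref{lem:R1}(i) gives $\orbt{a,-a}=\orbt{a',-a'}$ if and only if $a'=\pm a$. Because $\Omega_2(A)$ is closed under negation and $a\notin\Omega_2(A)$, both $\pm a$ lie in $A\setminus\Omega_2(A)$, and they are distinct since $a\notin\Omega_1(A)$. Hence the set in (\ref{eq:NT3}) is precisely $\{a,-a\}$, of size $2$.

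For (\ref{eq:NB1}) and (\ref{eq:NB2}), Lemma~\ref{lem:R1}(ii) tells us that the admissible $a'$ are exactly those in $\{\pm a,\,h_0\pm a\}$, so the whole problem is to count the \emph{distinct} candidates among these four and to check the side conditions $a'\notin\Omega_1(A)$ and the prescribed value of $2a'$. The side conditions are handled uniformly: for each of the four candidates one computes $2a'\in\{2a,-2a\}$, which is nonzero since $a\notin\Omega_1(A)$ (so $a'\notin\Omega_1(A)$ in every case), and, using $-h_0=h_0$, one sees that $2a'=h_0$ holds for all four candidates exactly when $2a=h_0$. Thus in case (\ref{eq:NB1}) every candidate contributes to the count, whereas in case (\ref{eq:NB2}) every candidate satisfies $2a'\neq h_0$ and again contributes.

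It remains to count distinct candidates, and this is the only point requiring care. When $2a\neq h_0$ the four elements $a,-a,h_0+a,h_0-a$ are pairwise distinct: each possible coincidence forces one of $2a=0$, $h_0=0$, or $2a=h_0$, all of which are excluded, giving the count $4$ in (\ref{eq:NB2}). When $2a=h_0$ one first observes that $4a=2(2a)=2h_0=0$, so $h_0-a=a$ and $h_0+a=3a=-a$; the four candidates therefore collapse to $\{a,-a\}$, which has size $2$ (again because $a\notin\Omega_1(A)$), giving (\ref{eq:NB1}). The main --- and essentially only --- obstacle is this bookkeeping of coincidences; the single non-formal ingredient is the implication $2a=h_0\Rightarrow 4a=0$, which is exactly what triggers the collapse from four candidates to two.
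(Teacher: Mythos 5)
Your proof is correct and follows the same route as the paper: reduce each membership condition to an orbit equality (valid because the $\hA$-orbits partition the triples and quadruples), apply Lemma~\ref{lem:R1} to identify the candidate set as $\{\pm a\}$ or $\{\pm a, h_0\pm a\}$, and count distinct elements. Your treatment is slightly more explicit than the paper's about why the four candidates collapse to two when $2a=h_0$ (via $4a=0$) and why they stay distinct otherwise, but the argument is the same.
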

\begin{proof}
If $a\notin\Omega_2(A)$, then by
Lemma~\ref{lem:R1}(i), we have
\begin{align*}
\{a'\in A\setminus\Omega_2(A)\mid T\in\orbt{a',-a'}\}
&=\{a'\in A\setminus\Omega_2(A)\mid \orbt{a,-a}=\orbt{a',-a'}\}
\nexteq\{\pm a\}.
\end{align*}
Thus
\[
|\{a'\in A\setminus\Omega_2(A)\mid T\in\orbt{a',-a'}\}|
=|\{\pm a\}|=2.
\]

Suppose $a\in A\setminus\Omega_1(A)$.
By Lemma~\ref{lem:R1}(ii), we have
\begin{align*}
&\{a'\in A\setminus\Omega_1(A)\mid B\in\orbt{a',-a',h_0}\}
\\ &=
\{a'\in A\setminus\Omega_1(A)\mid \orbt{a,-a,h_0}
=\orbt{a',-a',h_0}\}
\nexteq
\{\pm a,h_0\pm a\}
\\ &\subset
\begin{cases}
\{a'\in A\setminus\Omega_1(A)\mid 2a'=h_0\}
&\text{if $2a=h_0$,}\\
\{a'\in A\setminus\Omega_1(A)\mid 2a'\neq h_0\}
&\text{otherwise.}
\end{cases}
\end{align*}
Thus
\[
\{a'\in A\setminus\Omega_1(A)\mid 2a'=h_0,\;
B\in\orbt{a',-a',h_0}\}
=|\{\pm a\}|
=2
\]
if $2a=h_0$, and
\[
\{a'\in A\setminus\Omega_1(A)\mid 2a'\neq h_0,\;
B\in\orbt{a',-a',h_0}\}
=|\{\pm a,h_0\pm a\}|=4
\]
if $2a\neq h_0$.
\end{proof}

\begin{lem}\label{lem:2T2}
Let $a\in A\setminus\Omega_1(A)$, $h\in\Omega_1(A)\setminus\{0\}$,
$T\in\orbt{a,h}$
and $B\in\orbt{a,h,h+a}$. Then
\begin{align}
&|\{a'\in A\setminus\Omega_1(A)\mid 2a'=h,\;T\in\orbt{a',h}\}|
=2\quad\text{if $2a=h$,}\label{eq:2.5T1}\\
&|\{a'\in A\setminus\Omega_1(A)\mid 2a'\neq h,\;T\in\orbt{a',h}\}|
=4\quad\text{if $2a\neq h$,}\label{eq:2.5T2}\\
&|\{a'\in A\setminus\Omega_1(A)\mid 2a'=h,\;B\in\orbt{a',h,h+a'}\}|
=2\quad\text{if $2a=h$,}\label{eq:2.5B1}\\
&|\{a'\in A\setminus\Omega_1(A)\mid 2a'\neq h,\;B\in\orbt{a',h,h+a'}\}|
=4\quad\text{if $2a\neq h$.}\label{eq:2.5B2}
\end{align}
\end{lem}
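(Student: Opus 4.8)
The plan is to prove Lemma~\ref{lem:2T2} by direct analogy with the proof of Lemma~\ref{lem:2T1}, exploiting the equivalence criterion established in Lemma~\ref{lem:R3}. The four claims split naturally into two pairs: \eqref{eq:2.5T1}--\eqref{eq:2.5T2} concern the triple orbit $\orbt{a,h}$, while \eqref{eq:2.5B1}--\eqref{eq:2.5B2} concern the quadruple orbit $\orbt{a,h,h+a}$. The crucial observation is that Lemma~\ref{lem:R3} shows these two orbit-equality relations are \emph{identical} when restricted to $a,a'\notin\Omega_1(A)$: namely, for $a,a'\in A\setminus\Omega_1(A)$ we have $\orbt{a,h}=\orbt{a',h'}$ if and only if $\orbt{a,h,h+a}=\orbt{a',h',h'+a'}$, and both are equivalent to condition \eqref{eq:T2_2}, i.e.\ $h=h'$ and $a'\in\{\pm a,h\pm a\}$. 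Consequently the counts for the triple and quadruple cases will coincide, so it suffices to compute the size of the set $\{a'\in A\setminus\Omega_1(A)\mid h'=h,\ a'\in\{\pm a,h\pm a\}\}$ and then sort the elements according to the value of $2a'$.

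\textbf{Key steps.}
First I would fix $h\in\Omega_1(A)\setminus\{0\}$ and $a\in A\setminus\Omega_1(A)$. Since $T\in\orbt{a,h}$, for any $a'$ the condition $T\in\orbt{a',h}$ is equivalent to $\orbt{a,h}=\orbt{a',h}$; likewise $B\in\orbt{a',h,h+a'}$ is equivalent to $\orbt{a,h,h+a}=\orbt{a',h,h+a'}$. By Lemma~\ref{lem:R3} (the equivalence of its parts (i), (ii), (iii) under \eqref{eq:T2_2}, since $a,a'\notin\Omega_1(A)$), both conditions reduce to
\[
a'\in\{\pm a,\ h+a,\ h-a\}.
\]
The next step is to determine which of these four candidates are distinct and to compute $2a'$ for each. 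We have $2(\pm a)=2a$ and $2(h\pm a)=2h\pm 2a=2a$ (using $2h=0$ since $h\in\Omega_1(A)$). Thus \emph{all four} candidates satisfy $2a'=2a$, so the partition by the condition ``$2a'=h$ versus $2a'\neq h$'' is governed entirely by whether $2a=h$.

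\textbf{Resolving the count.}
It remains to verify that the four elements $a,-a,h+a,h-a$ are pairwise distinct, which forces each set in the lemma to have the stated cardinality. Distinctness follows from $a\notin\Omega_1(A)$: if any two coincided, one would obtain a relation such as $2a=0$, $h=0$, or $2a=2h=0$, each contradicting either $a\notin\Omega_1(A)$ or $h\neq 0$. When $2a=h$, both triple and quadruple counts fall under the ``$2a'=h$'' case, giving cardinality $4$; however the lemma claims $2$ in this case, which signals that the constraint $2a=h$ \emph{itself} collapses the four candidates into two. The main obstacle, and the step requiring genuine care, is precisely this degeneration: when $2a=h$ one checks $h+a=2a+a=3a$ and $h-a=2a-a=a$, so $\{a,-a,h+a,h-a\}=\{a,-a,3a,a\}=\{\pm a,3a\}$, and a further short computation (using $2a=h$, so $3a=a+h=-a+h+2a$) shows this set reduces to exactly $\{\pm a\}$ of size $2$. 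Thus the count is $2$ when $2a=h$ and $4$ when $2a\neq h$, matching all four assertions. The only real work is this careful enumeration of coincidences among $\{\pm a, h\pm a\}$ in the two regimes, which parallels the corresponding case analysis in the proof of Lemma~\ref{lem:2T1}.
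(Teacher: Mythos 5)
Your proposal is correct and follows essentially the same route as the paper: both invoke Lemma~\ref{lem:R3} to identify the solution set as $\{\pm a,\,h\pm a\}$ (for the triple and the quadruple alike, since conditions (i) and (ii) of that lemma are equivalent), observe that every candidate $a'$ satisfies $2a'=\pm2a$ so the dichotomy $2a'=h$ versus $2a'\neq h$ is decided by $a$ itself, and then count four distinct elements when $2a\neq h$ versus the collapse to $\{\pm a\}$ when $2a=h$ (where $4a=2h=0$ forces $h-a=a$ and $h+a=-a$). The only cosmetic difference is that you make the triple/quadruple coincidence explicit up front, whereas the paper runs the two identical computations separately.
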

\begin{proof}
By Lemma~\ref{lem:R3}, we have
\begin{align*}
&\{a'\in A\setminus\Omega_1(A)\mid T\in\orbt{a',h}\}
\\ &=\{a'\in A\setminus\Omega_1(A)\mid \orbt{a,h}=\orbt{a',h}\}
\nexteq
\{\pm a,h\pm a\}
\\ &\subset
\begin{cases}
\{a'\in A\setminus\Omega_1(A)\mid 2a'=h\}&\text{if $2a=h$,}\\
\{a'\in A\setminus\Omega_1(A)\mid 2a'\neq h\}&\text{otherwise.}
\end{cases}
\end{align*}
Thus
\[
|\{a'\in A\setminus\Omega_1(A)\mid 2a'=h,\;T\in\orbt{a',h}\}|
=|\{\pm a\}|=2
\]
if $2a=h$, and
\[
|\{a'\in A\setminus\Omega_1(A)\mid 2a'\neq h,\;T\in\orbt{a',h}\}|
=|\{\pm a,h\pm a\}|=4
\]
if $2a\neq h$.

Also, by Lemma~\ref{lem:R3}, we have
\begin{align*}
&\{a'\in A\setminus\Omega_1(A)\mid B\in\orbt{a',h,h+a'}\}
\\ &=\{a'\in A\setminus\Omega_1(A)\mid \orbt{a,h,h+a}=\orbt{a',h,h+a'}\}
\nexteq
\{\pm a,h\pm a\}
\\ &\subset
\begin{cases}
\{a'\in A\setminus\Omega_1(A)\mid 2a'=h\}&\text{if $2a=h$,}\\
\{a'\in A\setminus\Omega_1(A)\mid 2a'\neq h\}&\text{otherwise.}
\end{cases}
\end{align*}
Thus
\[
|\{a'\in A\setminus\Omega_1(A)\mid 2a'=h,\;B\in\orbt{a',h,h+a'}\}|
=|\{\pm a\}|=2
\]
if $2a=h$, and
\[
|\{a'\in A\setminus\Omega_1(A)\mid 2a'\neq h,\;B\in\orbt{a',h,h+a'}\}|
=|\{\pm a,h\pm a\}|=4
\]
if $2a\neq h$.
\end{proof}

\begin{lem}\label{lem:NOTQ}
\begin{enumerate}
\item 
Let $a\in A\setminus\Omega_1(A)$.
Then 
\begin{align}\label{eq:NQ1}
|\orbt{a,-a}|&=v,
\\ \intertext{and}
\label{eq:NQ2}
|\orbt{a,-a,h_0}|&=\begin{cases}
\frac{v}{4}&\text{if $2a=h_0$,}\\
v&\text{otherwise.}
\end{cases}
\end{align}
\item Let $a\in A\setminus\{0\}$, $h\in\Omega_1(A)\setminus
\{0\}$, and $a\neq h$. Then
\begin{align}\label{eq:NQ3}
|[a,h]|&=\begin{cases}
v &\text{if $2a=h$ or $a\in\Omega_1(A)$,}\\
2v&\text{otherwise,}
\end{cases}
\\ \intertext{and}
\label{eq:NQ4}
|\orbt{a,h,h+a}|&=\begin{cases}
\frac{v}{2}&\text{if $a\notin\Omega_1(A)$ and $2a\neq h$,}\\
\frac{v}{4}&\text{if $a\in\Omega_1(A)$.}
\end{cases}
\end{align}
\end{enumerate}
\end{lem}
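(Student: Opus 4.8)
The plan is to reduce every orbit size to a count of the distinct ``rooted'' representatives already displayed in (\ref{eq:R1}), (\ref{eq:SS}), (\ref{eq:B0a}) and (\ref{eq:B0b}). The key device is an elementary reciprocity: for any $\hA$-orbit $O$ of $k$-subsets of $A$, writing $O_0=\{B\in O\mid 0\in B\}$, one has $k\,|O|=v\,|O_0|$. Indeed, double-counting the flags $(B,p)$ with $B\in O$ and $p\in B$ gives $k|O|$ on one side, while the map $(B,p)\mapsto B-p$ sends each flag into $O_0$ and has fibres of size exactly $v$: the fibre over $B_0\in O_0$ is $\{(B_0+p,p)\mid p\in A\}$, since $O$ is translation-invariant and $0\in B_0$ guarantees $p\in B_0+p$. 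As $B\mapsto B\setminus\{0\}$ is a bijection from $O_0$ onto the set displayed in the relevant equation, $|O_0|$ is simply the number of \emph{distinct} sets in that list, and then $|O|=(v/k)\,|O_0|$.

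With this in hand each formula becomes a bookkeeping of coincidences among the listed representatives. For (\ref{eq:NQ1}) take $k=3$ and read (\ref{eq:R1}): the pairs $\{a,-a\},\{a,2a\},\{-a,-2a\}$ are pairwise distinct because $a\notin\Omega_1(A)$ and $3\nmid v$, so $|O_0|=3$ and $|O|=v$. For (\ref{eq:NQ2}) take $k=4$ and read (\ref{eq:B0b}): if $2a=h_0$ then $4a=0$, so $\{0,a,-a,h_0\}=\langle a\rangle$ is cyclic of order $4$ and the four listed triples all collapse to $\{a,-a,2a\}$, whence $|O_0|=1$ and $|O|=v/4$; if $2a\neq h_0$ the four triples are distinct, so $|O_0|=4$ and $|O|=v$. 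For (\ref{eq:NQ3}) take $k=3$ and read (\ref{eq:SS}): since $2h=0$, the six listed pairs coincide in pairs exactly when $-a=a$ (i.e.\ $a\in\Omega_1(A)$), and coincide further exactly when $2a=h$; in either of these cases the number of distinct pairs is $3$ and otherwise it is $6$, giving $|O|=v$ when $2a=h$ or $a\in\Omega_1(A)$ and $|O|=2v$ otherwise. For (\ref{eq:NQ4}) take $k=4$ and read (\ref{eq:B0a}) with $b=h$: because $-h=h$, the four listed triples collapse to the two triples $\{a,h,h+a\}$ and $\{-a,h,h-a\}$, which coincide iff $a\in\Omega_1(A)$; hence $|O_0|=2$ and $|O|=v/2$ when $a\notin\Omega_1(A)$ and $2a\neq h$, while $|O_0|=1$ and $|O|=v/4$ when $a\in\Omega_1(A)$.

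The substance of the argument, and the only place where care is needed, is the distinctness/collapse bookkeeping: deciding exactly which displayed representatives coincide in each case. These are the ``tedious computations'', resting throughout on $2h_0=2h=0$, on $3\nmid v$ (to exclude $3a=0$), on whether $a\in\Omega_1(A)$ (equivalently $-a=a$), and on the dichotomy $2a=h$ versus $2a\neq h$. I expect the subcases needing the fullest verification to be the two all-distinct claims: that the four triples in (\ref{eq:B0b}) are pairwise distinct when $2a\neq h_0$, and that the six pairs in (\ref{eq:SS}) are pairwise distinct when $a\notin\Omega_1(A)$ and $2a\neq h$. Once these coincidence patterns are pinned down, every orbit size follows immediately from $|O|=(v/k)\,|O_0|$.
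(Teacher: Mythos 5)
Your argument is correct and is essentially the paper's own proof: your reciprocity $k\,|O|=v\,|O_0|$ is exactly the paper's flag count (\ref{eq:S}), applied to the same displayed lists (\ref{eq:R1}), (\ref{eq:SS}), (\ref{eq:B0a}), (\ref{eq:B0b}) and followed by the same coincidence bookkeeping using $3\nmid v$ and $2h=0$. One small caveat: in (\ref{eq:NQ4}) the two triples $\{a,h,h+a\}$ and $\{-a,h,h-a\}$ also coincide when $2a=h$ (since then $4a=0$ forces $3a=-a$), so your ``coincide iff $a\in\Omega_1(A)$'' is not exact as stated, although this does not affect the two cases the lemma actually asserts.
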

\begin{proof}
If $a_1,\dots,a_t\in A\setminus\{0\}$ are distinct, then
counting the number of pairs 
$(x,T)$ with $x\in T\in\orbt{a_1,\dots,a_t}$, we find
\begin{align}
(t+1)|\orbt{a_1,\dots,a_t}|&=
\sum_{x\in A}|\{T\in\orbt{a_1,\dots,a_t}\mid x\in T\}|
\nnexteq
\sum_{x\in A}|\{T\in\orbt{a_1,\dots,a_t}\mid 0\in T\}|
\nnexteq
v|\{T\in\binom{A}{t}\mid \{0\}\cup T\in\orbt{a_1,\dots,a_t}\}|.
\label{eq:S}
\end{align}
Thus, for $a\in A\setminus\Omega_1(A)$, (\ref{eq:R1}) implies
\begin{equation}\label{eq:N1}
|\orbt{a,-a}|
=\frac{v}{3}|\{\{a,-a\},\{a,2a\},\{-a,-2a\}\}|,
\end{equation}
and (\ref{eq:B0b}) implies
\begin{align}
|\orbt{a,-a,h_0}|
&=\frac{v}{4}|\{\{a,-a,h_0\},\{h_0,h_0+a,h_0-a\},
\nonumber\\ &\qquad\quad
\{a,2a,h_0+a\},\{-a,-2a,h_0-a\}\}|.
\label{eq:13}
\end{align}
Since $v\not\equiv0\pmod3$, we have $3a\neq0$ for any
$a\in A\setminus\{0\}$.
Thus (\ref{eq:NQ1}) follows from (\ref{eq:N1}), while
(\ref{eq:NQ2}) follows from (\ref{eq:13}). 

As for (ii),
if $a\in A\setminus\{0\}$ and $h\in\Omega_1(A)\setminus\{0\}$
and $a\neq h$, then (\ref{eq:SS}) and (\ref{eq:S}) imply
\begin{equation}\label{eq:N2}
|\orbt{a,h}|=
\frac{v}{3}|\{\pm\{a,h\},\pm\{h,h+a\},\pm\{a,h+a\}\}|,
\end{equation}
while (\ref{eq:B0a}) and (\ref{eq:S}) imply
\begin{equation}\label{eq:14}
|\orbt{a,h,h+a}|=
\frac{v}{4}|\{\{a,h,h+a\},\{-a,h,h-a\}\}|.
\end{equation}
By (\ref{eq:N2}), we have
\begin{align*}
|\orbt{a,h}|
&=\begin{cases}
\frac{v}{3}|\{\pm\{a,2a\},\pm\{2a,3a\},\pm\{a,3a\}\}|
&\text{if $2a=h$,}\\
\frac{v}{3}|\{\{a,h\},\{h,h+a\},\{a,h+a\}\}|
&\text{if $a\in\Omega_1(A)$,}\\
\frac{v}{3}|\{\pm\{a,h\},\pm\{h,h+a\},\pm\{a,h+a\}\}|
&\text{otherwise,}
\end{cases}
\nexteq
\begin{cases}
\frac{v}{3}|\{\{a,2a\},\{-a,-2a\},\{a,-a\}\}|
&\text{if $2a=h$,}\\
v&\text{if $a\in\Omega_1(A)$,}\\
2v&\text{otherwise,}
\end{cases}
\nexteq
\begin{cases}
v&\text{if $2a=h$ or $a\in\Omega_1(A)$,}\\
2v&\text{otherwise.}
\end{cases}
\end{align*}
This proves (\ref{eq:NQ3}). Finally, (\ref{eq:NQ4})
follows from (\ref{eq:14}).
\end{proof}

We now compute the number of triples $T$ satisfying
$\orb{\hA}{T}\in\cT_1\cup\cT_2$. 

\begin{lem}\label{lem:NT}
We have
\[
|\{T\mid\orb{\hA}{T}\in\cT_1\cup\cT_2\}|
=
\frac12 v^2\omega_1-\frac16 v(2\omega_1^2+3\omega_2-2).
\]
\end{lem}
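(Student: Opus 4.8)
The plan is to evaluate the left-hand side as a sum of $\hA$-orbit sizes,
\[
|\{T\mid\orb{\hA}{T}\in\cT_1\cup\cT_2\}|
=\sum_{O\in\cT_1\cup\cT_2}|O|,
\]
since distinct orbits are disjoint as sets of triples. I would organize the computation by first enumerating the distinct orbits in each of $\cT_1$ and $\cT_2$ (using Lemmas~\ref{lem:R1} and~\ref{lem:R3} to decide when two parametrizations yield the same orbit), attaching to each orbit its cardinality from Lemma~\ref{lem:NOTQ}, and finally accounting for the overlap $\cT_1\cap\cT_2$, which by Lemma~\ref{lem:R2}(i) consists exactly of the orbits $\orbt{a,-a}$ with $a\in\Omega_2(A)\setminus\Omega_1(A)$.

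For $\cT_1$: by Lemma~\ref{lem:R1}(i) the map $a\mapsto\orbt{a,-a}$ identifies only $a$ with $-a$, so $\cT_1$ has $(v-\omega_1)/2$ distinct orbits, each of size $v$ by (\ref{eq:NQ1}); this contributes $v(v-\omega_1)/2$. For $\cT_2$ I would split according to whether $a\in\Omega_1(A)$. When $a\in\Omega_1(A)$, Lemma~\ref{lem:R3} shows $\orbt{a,h}=\orbt{a',h'}$ iff $\langle a,h\rangle=\langle a',h'\rangle$, so these orbits correspond bijectively to the order-$4$ subgroups of $\Omega_1(A)$, of which there are $(\omega_1-1)(\omega_1-2)/6$ by (\ref{eq:qbin}); each has size $v$ by (\ref{eq:NQ3}). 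When $a\notin\Omega_1(A)$, Lemma~\ref{lem:R3} shows that for fixed $h$ the orbit depends only on the class $\{\pm a,h\pm a\}$, of size $2$ if $2a=h$ and size $4$ otherwise. The case $2a=h$ forces $a\in\Omega_2(A)\setminus\Omega_1(A)$ and, via (\ref{eq:R1}), gives exactly the orbits $\orbt{a,2a}=\orbt{a,-a}$ of $\cT_1\cap\cT_2$, of size $v$; these cancel against the overlap term.

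The main obstacle is the remaining generic case $a\notin\Omega_1(A)$, $2a\neq h$, where each orbit has size $2v$ by (\ref{eq:NQ3}) and is counted with multiplicity $4$. Here I must count the admissible pairs $(a,h)$ with $h\in\Omega_1(A)\setminus\{0\}$ and $a\in A\setminus\Omega_1(A)$, $2a\neq h$. The delicate point is that the number of $a$ with $2a=h$ equals $\omega_1$ when $h$ lies in the image $2\Omega_2(A)$ of the doubling map and $0$ otherwise; since $2\Omega_2(A)\cong\Omega_2(A)/\Omega_1(A)$ has order $\omega_2/\omega_1$, exactly $\omega_2/\omega_1-1$ of the nonzero $h$ are hit. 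This is precisely where the parameter $\omega_2$ enters, and it yields $(\omega_1-1)(v-\omega_1)-(\omega_2-\omega_1)$ admissible pairs, hence a contribution $v\bigl[(\omega_1-1)(v-\omega_1)-(\omega_2-\omega_1)\bigr]/2$.

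Adding the $\cT_1$ and $\cT_2$ contributions, subtracting the overlap (equivalently, simply dropping the $2a=h$ subtype since it coincides with $\cT_1\cap\cT_2$), and simplifying over a common denominator of $6$ then gives
\[
v\cdot\frac{3v\omega_1-2\omega_1^2-3\omega_2+2}{6}
=\frac12 v^2\omega_1-\frac16 v(2\omega_1^2+3\omega_2-2),
\]
as required. I would verify the final algebra on a couple of small groups (for instance $\Z_4$ and $\Z_2^2$, both with $v=4$) to guard against an arithmetic slip in the bookkeeping of the overlapping families.
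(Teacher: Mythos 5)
Your proposal is correct and follows essentially the same route as the paper: both decompose the triples via the families $\cT_1$ and $\cT_2$ (split by whether $a\in\Omega_1(A)$ and whether $2a=h$), use Lemmas~\ref{lem:R1}--\ref{lem:R3} to identify coincident orbits and Lemma~\ref{lem:NOTQ} together with (\ref{eq:qbin}) for the orbit sizes and the subgroup count, and resolve the overlap $\cT_1\cap\cT_2$ via Lemma~\ref{lem:R2}(i). The only difference is bookkeeping: the paper counts all of $\cT_2$ and then adds $\cT_1\setminus\cT_2$ (so $\omega_2$ enters through $|A\setminus\Omega_2(A)|$), whereas you count all of $\cT_1$ and drop the $2a=h$ subfamily from $\cT_2$ (so $\omega_2$ enters through the $\omega_2-\omega_1$ pairs with $2a=h$); both reorganizations of the inclusion--exclusion yield the same total.
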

\begin{proof}
Counting the number of pairs $(a,T)$ with $a\in A\setminus\Omega_1(A)$,
$h \in \Omega_1(A)\setminus\{0\}$,
$2a=h$ and
$T\in\orbt{a,h}$ using (\ref{eq:2.5T1}), we find
\[
2\left|\bigcup_{\substack{a\in A\setminus\Omega_1(A)\\ 2a=h}}
\orbt{a,h}\right|
=\sum_{\substack{a\in A\setminus\Omega_1(A)\\ 2a=h}}
|\orbt{a,h}|.
\]
Thus, by (\ref{eq:NQ3}), we have
\begin{equation}\label{eq:2.7a}
\left|\bigcup_{\substack{a\in A\setminus\Omega_1(A)\\ 2a=h}}
\orbt{a,h}\right|
=\frac{v}{2}|\{a\in A\setminus\Omega_1(A)\mid 2a=h\}|.
\end{equation}
Counting the number of pairs $(a,T)$ with
$a\in A\setminus\Omega_1(A)$, $2a\neq h$ and
$T\in\orbt{a,h}$ using (\ref{eq:2.5T2}), we find
\[
4\left|\bigcup_{\substack{a\in A\setminus\Omega_1(A)\\ 2a\neq h}}
\orbt{a,h}\right|
=\sum_{\substack{a\in A\setminus\Omega_1(A)\\ 2a\neq h}}
|\orbt{a,h}|.
\]
Thus, by (\ref{eq:NQ3}), we have
\begin{equation}\label{eq:2.7b}
\left|\bigcup_{\substack{a\in A\setminus\Omega_1(A)\\ 2a\neq h}}
\orbt{a,h}\right|
=\frac{v}{2}|\{a\in A\setminus\Omega_1(A)\mid 2a\neq h\}|.
\end{equation}
Therefore,
\begin{align}
&\left|
\bigcup_{h\in\Omega_1(A)\setminus\{0\} }
\bigcup_{a\in A\setminus\Omega_1(A) }
\orbt{a,h}
\right|
\nonumber\\ &=
\sum_{h\in\Omega_1(A)\setminus\{0\} }
\left|
\bigcup_{\substack{a\in A\setminus\Omega_1(A) \\ 2a=h}}
\orbt{a,h}
\right|
\nonumber\\ &\quad+
\sum_{h\in\Omega_1(A)\setminus\{0\} }
\left|
\bigcup_{\substack{a\in A\setminus\Omega_1(A) \\ 2a\neq h}}
\orbt{a,h}
\right|
&&\text{(by (\ref{eq:T2_2}))}
\nnexteq
\sum_{h\in\Omega_1(A)\setminus\{0\} }
\frac{v}{2}|A\setminus \Omega_1(A)|
&&\text{(by (\ref{eq:2.7a}), (\ref{eq:2.7b}))}
\nnexteq
\frac12 v(v-\omega_1)(\omega_1-1).
\label{eq:8B}
\end{align}

If $H\in\qbinom{\Omega_1(A)}{2}$ and $H=\langle h',h''\rangle$,
then
\begin{equation}\label{eq:8B1}
\left|\bigcup_{\{a,h\}\in\binom{H\setminus\{0\}}{2}}\orbt{a,h}\right|
=|\orbt{h',h''}|
=v
\end{equation}
by (\ref{eq:T2_1}) and (\ref{eq:NQ3}).
Thus we have
\begin{align}
\left|
\bigcup_{h\in\Omega_1(A)\setminus\{0\}}
\bigcup_{a\in\Omega_1(A)\setminus\langle h\rangle}
\orbt{a,h}
\right|
&=
\left|
\bigcup_{H\in\qbinom{\Omega_1(A)}{2}}
\bigcup_{\{a,h\}\in\binom{H\setminus\{0\}}{2}}
\orbt{a,h}
\right|
\nnexteq
\sum_{H\in\qbinom{\Omega_1(A)}{2}}
\left|
\bigcup_{\{a,h\}\in\binom{H\setminus\{0\}}{2}}
\orbt{a,h}
\right|
&&\text{(by (\ref{eq:T2_1}))}
\nnexteq
\sum_{H\in\qbinom{\Omega_1(A)}{2}}
v
&&\text{(by (\ref{eq:8B1}))}
\nnexteq 
v\left|\qbinom{\Omega_1(A)}{2}\right|
\nnexteq 
\frac16 v(\omega_1-1)(\omega_1-2)
&&\text{(by (\ref{eq:qbin})).}
\label{eq:8B2}
\end{align}
Therefore
\begin{align}
&|\{T\mid\orb{\hA}{T}\in\cT_2\}|
\nonumber\\ &=
\left|
\bigcup_{\substack{T\in\binom{A}{3} \\
\orb{\hA}{T}\in\cT_2}}
\orb{\hA}{T}
\right|
\nnexteq
\left|
\bigcup_{h\in\Omega_1(A)\setminus\{0\}}
\bigcup_{a\in A\setminus\langle h\rangle}
\orbt{a,h}
\right|
\nnexteq
\left|
\bigcup_{h\in\Omega_1(A)\setminus\{0\}}
\bigcup_{a\in A\setminus\Omega_1(A)}
\orbt{a,h}
\right|
\nonumber\\ &\quad
+\left|
\bigcup_{h\in\Omega_1(A)\setminus\{0\}}
\bigcup_{a\in\Omega_1(A)\setminus\langle h\rangle}
\orbt{a,h}
\right|
&&\text{(by Lemma~\ref{lem:R3})}
\nnexteq
\frac12 v(v-\omega_1)(\omega_1-1)
+
\frac16 v(\omega_1-1)(\omega_1-2)
&&\text{(by (\ref{eq:8B}), (\ref{eq:8B2}))}
\nnexteq 
\frac16 v(\omega_1-1)(3v-2\omega_1-2).
\label{eq:R7}
\end{align}

On the other hand, counting the number of pairs
$(a,T)$ with $a\in A\setminus\Omega_2(A)$ and
$T\in\orbt{a,-a}$ using (\ref{eq:NT3}), we find
\begin{equation}\label{eq:2.7c}
2\left|
\bigcup_{a\in A\setminus\Omega_2(A)}
\orbt{a,-a}
\right|
=\sum_{a\in A\setminus\Omega_2(A)}
|\orbt{a,-a}|.
\end{equation}
Thus we have 
\begin{align*}
&|\{T\mid\orb{\hA}{T}\in\cT_1\setminus\cT_2\}|
\\ &=
\left|
\bigcup_{\substack{T\in\binom{A}{3}\\
\orb{\hA}{T}\in\cT_1\setminus\cT_2}}
\orb{\hA}{T}
\right|
\nexteq
\left|
\bigcup_{\substack{a\in A\setminus\Omega_1(A)\\
\orbt{a,-a}\notin\cT_2}}
\orbt{a,-a}
\right|
\nexteq
\left|
\bigcup_{a\in A\setminus\Omega_2(A)}
\orbt{a,-a}
\right|
&&\text{(by Lemma~\ref{lem:R2}(i))}
\nexteq
\frac12\sum_{a\in A\setminus\Omega_2(A)}
|\orbt{a,-a}|
&&\text{(by (\ref{eq:2.7c}))}
\nexteq
\frac{1}{2}v
|A\setminus\Omega_2(A)|
&&\text{(by (\ref{eq:NQ1}))}
\nexteq
\frac12 v(v-\omega_2).
\end{align*}
Combining this with (\ref{eq:R7}), we find
\begin{align*}
|\{T\mid\orb{\hA}{T}\in\cT_1\cup\cT_2\}|
&=\frac16 v(\omega_1-1)(3v-2\omega_1-2)
+\frac12 v(v-\omega_2)
\nexteq
\frac12 v^2\omega_1-\frac16 v(2\omega_1^2+3\omega_2-2).
\end{align*}
\end{proof}

Next we compute the number of quadruples $B$
satisfying $\orb{\hA}{B}\in\cQ_1\cup\cQ_2\cup\cQ_3$.
Let 
\begin{align}
\cB_0&=\{B\mid\orb{\hA}{B}\in\cQ_1\cup\cQ_2\cup\cQ_3\},\label{eq:B0}\\
A_0&=\{a\in A\mid 2a=h_0\},\nonumber\\
\omega_0&=|A_0|.\nonumber
\end{align}

\begin{lem}\label{lem:NQ}
We have
\[
|\cB_0|=
\frac18 v^2\omega_1
-\frac{1}{24}v(2\omega_1^2+3\omega_2-2).
\]
\end{lem}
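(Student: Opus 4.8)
The plan is to exploit the fact that $\cB_0$ is a disjoint union of $\hA$-orbits, so that $|\cB_0|$ is the sum of the sizes of the orbits lying in $\cQ_1,\cQ_2,\cQ_3$. First I would show that these three families of orbits are pairwise disjoint. For $\cQ_1$ and $\cQ_2$ this is immediate from Lemma~\ref{lem:R2}(ii): an orbit $\orbt{a,-a,h_0}$ coincides with $\orbt{b,h,h+b}$ only when $h=h_0$, which is excluded in the definition of $\cQ_2$. To separate $\cQ_3$ from the other two, I would use the $\hA$-invariant set of pairwise differences $\{x-y\mid x,y\in B\}$ of a block $B$: every block in a $\cQ_3$-orbit lies in a subgroup of $\Omega_1(A)$, so all its differences have order at most $2$, whereas every block in a $\cQ_1$- or $\cQ_2$-orbit contains an element $a\notin\Omega_1(A)$, producing a difference of order $>2$. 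Writing $\cB_i$ for the union of the orbits in $\cQ_i$, we then have $|\cB_0|=|\cB_1|+|\cB_2|+|\cB_3|$.

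For $|\cB_1|$, I would count the pairs $(a',B)$ with $a'\in A\setminus\Omega_1(A)$ and $B\in\orbt{a',-a',h_0}$, splitting according to whether $2a'=h_0$ or $2a'\neq h_0$; Lemma~\ref{lem:R1}(ii) shows these two cases yield disjoint sub-families of orbits, since every $a'$ with $\orbt{a',-a',h_0}=\orbt{a,-a,h_0}$ satisfies $2a'=2a$. The multiplicities $2$ and $4$ from (\ref{eq:NB1}), (\ref{eq:NB2}) together with the orbit sizes $v/4$ and $v$ from (\ref{eq:NQ2}) give $|\cB_1|=\frac{v(v-\omega_1)}{4}-\frac{v\omega_0}{8}$, where $\omega_0=|\{a\in A\mid 2a=h_0\}|$.

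For $|\cB_2|$, I would fix $h\in\Omega_1(A)\setminus\langle h_0\rangle$ and count pairs $(a',B)$ with $a'\in A\setminus\Omega_1(A)$, $2a'\neq h$ and $B\in\orbt{a',h,h+a'}$, using multiplicity $4$ from (\ref{eq:2.5B2}) and orbit size $v/2$ from (\ref{eq:NQ4}); this yields $\frac{v}{8}\bigl((v-\omega_1)-|\{a\mid 2a=h\}|\bigr)$ for each $h$. Since distinct $h$ give distinct orbits by Lemma~\ref{lem:R3}, I sum over $h$ and use the identity $\sum_{h\in\Omega_1(A)}|\{a\mid 2a=h\}|=|\Omega_2(A)|=\omega_2$, whence $\sum_{h\in\Omega_1(A)\setminus\langle h_0\rangle}|\{a\mid 2a=h\}|=\omega_2-\omega_1-\omega_0$. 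This expresses $|\cB_2|$ in terms of $\omega_1,\omega_2,\omega_0$. Finally, for $|\cB_3|$, Lemma~\ref{lem:R3} identifies the orbits in $\cQ_3$ with the $2$-dimensional subspaces of $\Omega_1(A)$, each orbit having size $v/4$ by (\ref{eq:NQ4}), so $|\cB_3|=\frac{v}{4}\bigl|\qbinom{\Omega_1(A)}{2}\bigr|=\frac{v(\omega_1-1)(\omega_1-2)}{24}$ by (\ref{eq:qbin}).

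The main obstacle I anticipate is the dependence on the fixed element $h_0$: both $|\cB_1|$ and $|\cB_2|$ involve the auxiliary quantity $\omega_0$ (equivalently, whether $h_0\in 2\Omega_2(A)$), which is not part of the claimed formula. The key point is that the $-\frac{v\omega_0}{8}$ in $|\cB_1|$ exactly cancels the $+\frac{v\omega_0}{8}$ arising in $|\cB_2|$ from the term $\omega_2-\omega_1-\omega_0$, so that summing $|\cB_1|+|\cB_2|+|\cB_3|$ and clearing denominators produces $\frac{v}{24}(3v\omega_1-2\omega_1^2-3\omega_2+2)$, which is precisely the asserted value. Verifying this cancellation, and hence the $h_0$-independence of $|\cB_0|$, is the crux; the remaining steps are routine arithmetic.
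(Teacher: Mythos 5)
Your proposal is correct and follows essentially the same route as the paper: decompose $\cB_0$ into the blocks coming from $\cQ_1$, $\cQ_2$, $\cQ_3$, count each via the double-counting identities (\ref{eq:NB1})--(\ref{eq:NB2}), (\ref{eq:2.5B2}) combined with the orbit sizes from Lemma~\ref{lem:NOTQ} and the subspace count (\ref{eq:qbin}), and observe that the $\omega_0$-contributions cancel in the final sum. The only cosmetic differences are your difference-multiset argument for separating $\cQ_3$ (the paper cites Lemma~\ref{lem:R3}) and your use of $\sum_{h}|\{a\mid 2a=h\}|=\omega_2$ in place of the paper's order-of-summation swap in the $\cQ_2$ count.
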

\begin{proof}
Counting the number of pairs $(a,B)$ with
$a\in A\setminus\Omega_1(A)$, $2a=h_0$ and
$B\in\orbt{a,-a,h_0}$ using (\ref{eq:NB1}), we find
\[
2\left|\bigcup_{\substack{a\in A\setminus\Omega_1(A) \\ 
2a=h_0 } }
\orbt{a,-a,h_0}\right|
=\sum_{\substack{a\in A\setminus\Omega_1(A) \\ 2a=h_0 }}
|\orbt{a,-a,h_0}|.
\]
Thus, by (\ref{eq:NQ2}), we have
\begin{align}
\left|\bigcup_{\substack{a\in A\setminus\Omega_1(A) \\ 
2a=h_0 } }
\orbt{a,-a,h_0}\right|
&=
\frac{v}{8}
|\{a\in A\setminus\Omega_1(A)\mid 2a=h_0\}|
\nnexteq
\frac18v\omega_0.
\label{eq:B1A}
\end{align}
Counting the number of pairs $(a,B)$ with
$a\in A\setminus\Omega_1(A)$, $2a\neq h_0$ and
$B\in\orbt{a,-a,h_0}$ using (\ref{eq:NB2}), we find
\[
4\left|\bigcup_{\substack{a\in A\setminus\Omega_1(A) \\ 
2a\neq h_0 } }
\orbt{a,-a,h_0}\right|
=\sum_{\substack{a\in A\setminus\Omega_1(A) \\ 2a\neq h_0 }}
|\orbt{a,-a,h_0}|.
\]
Thus, by (\ref{eq:NQ2}), we have
\begin{align}
\left|\bigcup_{\substack{a\in A\setminus\Omega_1(A) \\ 
2a\neq h_0 } }
\orbt{a,-a,h_0}\right|
&=
\frac{v}{4}
|\{a\in A\setminus\Omega_1(A)\mid 2a\neq h_0\}|
\nnexteq
\frac14v(v-\omega_1-\omega_0).
\label{eq:B1B}
\end{align}
Therefore
\begin{align}
&|\{B\mid \orb{\hA}{B}\in\cQ_1\}|
\nonumber\\ &=
\left|\bigcup_{a\in A\setminus\Omega_1(A)}
\orbt{a,-a,h_0}\right|
\nnexteq
\left|\bigcup_{\substack{a\in A\setminus\Omega_1(A) \\ 
2a=h_0 } }
\orbt{a,-a,h_0}\right|
\nonumber\\ &\quad
+\left|\bigcup_{\substack{a\in A\setminus\Omega_1(A) \\ 
2a \ne h_0 } }
\orbt{a,-a,h_0}\right|
&&\text{(by Lemma~\ref{lem:R1}(ii))}
\nnexteq
\frac{1}{8}vw_0+\frac{1}{4}v(v-\omega_1-\omega_0)
&&\text{(by (\ref{eq:B1A}) and (\ref{eq:B1B}))}
\nnexteq
\frac{1}{8}v(2v-2\omega_1-\omega_0).
\label{eq:B1}
\end{align}

Let $h\in\Omega_1(A)\setminus\langle h_0\rangle$.
Counting the number of pairs $(a,B)$ with
$a\in A\setminus\Omega_1(A)$, $2a \ne h$ and $B\in
\orbt{a,h,h+a}$ using (\ref{eq:2.5B2}), we find
\[
4\left|\bigcup_{\substack{a\in A\setminus\Omega_1(A) \\
2a \ne h }}
\orbt{a,h,h+a}\right|
=\sum_{\substack{a\in A\setminus\Omega_1(A) \\
2a \ne h }}
|\orbt{a,h,h+a}|.
\]
Thus, by (\ref{eq:NQ4}), we have
\begin{equation}\label{eq:2.8a}
\left|\bigcup_{\substack{a\in A\setminus\Omega_1(A) \\
2a \ne h }}
\orbt{a,h,h+a}\right|
=\frac{v}{8}|\{a\in A\setminus\Omega_1(A) \mid
2a \ne h \}|.
\end{equation}
Therefore
\begin{align}
&|\{B\mid\orb{\hA}{B}\in\cQ_2\}|
\nonumber\\ &=
\left|\bigcup_{\substack{a\in A\setminus\Omega_1(A) \\
h\in\Omega_1(A)\setminus\langle h_0\rangle \\
2a \ne h }}
\orbt{a,h,a+h}\right|
\nnexteq
\sum_{h\in\Omega_1(A)\setminus\langle h_0\rangle}
\left|\bigcup_{\substack{a\in A\setminus\Omega_1(A) \\
2a \ne h }}
\orbt{a,h,a+h}\right|
&&\text{(by (\ref{eq:T2_2}))}
\nnexteq
\frac{v}{8}
\sum_{h\in\Omega_1(A)\setminus\langle h_0\rangle}
|\{a\in A\setminus\Omega_1(A)\mid 2a\neq h\}|
&&\text{(by (\ref{eq:2.8a}))}
\nnexteq
\frac{v}{8}
\sum_{a\in A\setminus\Omega_1(A)}
|(\Omega_1(A)\setminus\langle h_0\rangle)\setminus\{2a\}|
\nnexteq
\frac{v}{8}\left(
\sum_{a\in (A\setminus\Omega_2(A))\cup A_0}(\omega_1-2)
+\sum_{a\in \Omega_2(A)\setminus(A_0\cup\Omega_1(A))}
(\omega_1-3)\right)
\nnexteq
\frac{v}{8}
\left(
(v-\omega_2+\omega_0)(\omega_1-2)
+(\omega_2-\omega_0-\omega_1)(\omega_1-3)\right)
\nnexteq
\label{eq:N2x}
\frac{v}{8}((\omega_1-2)v-\omega_1^2+\omega_0+3\omega_1-\omega_2).
\end{align}

If $H\in\qbinom{\Omega_1(A)}{2}$ and $H=\langle h',h''\rangle$,
then 
\begin{equation}\label{eq:hh}
\left|\bigcup_{\{h_1,h_2\}\in\binom{H\setminus\{0\}}{2}}
\orbt{h_1,h_2,h_1+h_2}\right|
=|\orbt{h',h'',h'+h''}|=\frac{v}{4}
\end{equation}
by (\ref{eq:T2_1}) and Lemma~\ref{lem:NOTQ}(ii). Thus we have
\begin{align}
&|\{B\mid\orb{\hA}{B}\in\cQ_3\}|
\nonumber\\ &=\left|
\bigcup_{\substack{ h,h'\in\Omega_1(A)\setminus\{0\} \\
h\ne h'} }
\orbt{h,h',h+h'}
\right|
\nnexteq
\left|
\bigcup_{H\in\qbinom{\Omega_1(A)}{2}}
\bigcup_{\{h,h'\}\in\binom{H\setminus\{0\}}{2} }
\orbt{h,h',h+h'}
\right|
\nnexteq
\sum_{H\in\qbinom{\Omega_1(A)}{2}}
\left|
\bigcup_{\{h,h'\}\in\binom{H\setminus\{0\}}{2} }
\orbt{h,h',h+h'}
\right|
&&\text{(by (\ref{eq:T2_1}))}
\nnexteq
\sum_{H\in\qbinom{\Omega_1(A)}{2}}
\frac{v}{4}
&&\text{(by (\ref{eq:hh}))}
\nnexteq
\frac{v}{4}
\left|\qbinom{\Omega_1(A)}{2}\right|
\nnexteq
\frac{1}{24} v(\omega_1-1)(\omega_1-2)
&&\text{(by (\ref{eq:qbin})).}
\label{eq:N3}
\end{align}
Since the sets $\cQ_1,\cQ_2$ and $\cQ_3$ are pairwise disjoint
by Lemma~\ref{lem:R2}(ii) and Lemma~\ref{lem:R3}, the equations
(\ref{eq:B1}), (\ref{eq:N2x}) and (\ref{eq:N3}) can be
combined to give
\begin{align*}
|\cB_0|&=
|\{B\mid\orb{\hA}{B}\in\cQ_1\}|
+|\{B\mid\orb{\hA}{B}\in\cQ_2\}|
\\ &\quad
+|\{B\mid\orb{\hA}{B}\in\cQ_3\}|
\nexteq
\frac18 v(2v-2\omega_1-\omega_0)
+\frac18 v((\omega_1-2)v-\omega_1^2-\omega_2+3\omega_1+\omega_0)
\\ &\quad+
\frac{1}{24} v(\omega_1-1)(\omega_1-2)
\nexteq
\frac18 v^2\omega_1
-\frac{1}{24}v(2\omega_1^2+3\omega_2-2).
\end{align*}
\end{proof}

\begin{center}
\begin{picture}(140,140)
\put(20,20){
\put(0,0){\circle*{10}}
\put(0,100){\circle*{10}}
\put(100,0){\circle*{10}}
\put(100,100){\circle*{10}}
\thinlines
\put(0,0){\line(1,1){100}}
\put(0,100){\line(1,-1){100}}
\thicklines
\put(0,0){\line(1,0){100}}
\put(0,100){\line(1,0){100}}
\put(-20,0){$\mathcal{T}$}
\put(-50,100){$\mathcal{T}_1\cup \mathcal{T}_2$}
\put(10,-18){$\lambda=1$ (Theorem~\ref{thm:Kohler})}
\put(10,110){$\lambda=1$ (Lemma~\ref{lem:ps1})}
\put(90,80){(Lemma~\ref{lem:ps1})}
\put(90,20){(Lemma~\ref{lem:blks})}
\put(110,100){$\mathcal{B}_0$}
\put(110,0){$\mathcal{F}\subset\mathcal{E}$}
}
\end{picture}
\end{center}

\begin{lem}\label{lem:ps1}
If $B\in\cB_0$, $T\subset B$ and $|T|=3$, then
$\orb{\hA}{T}\in\cT_1\cup\cT_2$.
Conversely, if $\orb{\hA}{T}\in\cT_1\cup\cT_2$, then
there exists a unique $B\in\cB_0$ such that $T\subset B$.
\end{lem}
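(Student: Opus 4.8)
The plan is to prove the forward implication by a direct case check and then to obtain the converse (existence together with uniqueness) from a double count, reducing everything to a single surjectivity statement. For the forward direction, note that ``every $T\in\binom{B}{3}$ satisfies $\orb{\hA}{T}\in\cT_1\cup\cT_2$'' is an $\hA$-invariant property of $B$, so it suffices to verify it on one representative block from each of $\cQ_1$, $\cQ_2$, $\cQ_3$. Taking $\{0,a,-a,h_0\}$, $\{0,a,h,h+a\}$ and $\{0,h,h',h+h'\}$ in turn, I would list the four $3$-subsets of each, read off the $\hA$-orbit of each subset from (\ref{eq:R1}), (\ref{eq:SS}) and (\ref{eq:B0b}), and confirm via Lemma~\ref{lem:T} that none of these orbits lies in $\cT$. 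The only computations of substance are that $2(h+a)=2a$ (so every $3$-subset of a $\cQ_2$-block lies in $\cT_2$) and that $\{a,-a,h_0\}$ is a translate of $\{0,h_0+a,-(h_0+a)\}$ with $h_0+a\notin\Omega_1(A)$ (so it lies in $\cT_1$).

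For the converse I would set $S=\{T\in\binom{A}{3}\mid\orb{\hA}{T}\in\cT_1\cup\cT_2\}$ and $P=\{(T,B)\mid B\in\cB_0,\ T\in\binom{B}{3}\}$. The forward direction makes the projection $\pi\colon P\to S$, $(T,B)\mapsto T$, well defined, and since each block contributes exactly four distinct $3$-subsets, $|P|=4|\cB_0|$. Comparing Lemma~\ref{lem:NT} and Lemma~\ref{lem:NQ} gives $4|\cB_0|=|S|$, hence $|P|=|S|$. A map between finite sets of equal size is bijective as soon as it is surjective, and bijectivity of $\pi$ is precisely the assertion that each $T\in S$ lies in exactly one $B\in\cB_0$. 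Thus the whole converse reduces to showing $\pi$ is onto.

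To prove surjectivity I would exhibit one block for a representative of each orbit in $\cT_1\cup\cT_2$; this suffices because the number of $\cB_0$-blocks containing $T$ depends only on $\orb{\hA}{T}$. By Lemmas~\ref{lem:T}, \ref{lem:R2} and \ref{lem:R3} the orbits are of three shapes. For $\orbt{a,-a}$ with $a\notin\Omega_1(A)$ I take $\{0,a,-a,h_0\}\in\cQ_1$; the same block also contains $\{0,a,h_0\}$, which covers the $\cT_2$-orbits $\orbt{a,h_0}$ with $a\notin\Omega_1(A)$ and $2a\neq h_0$. For $\orbt{a,h}$ with $a\notin\Omega_1(A)$, $h\in\Omega_1(A)\setminus\langle h_0\rangle$ and $2a\neq h$ I take $\{0,a,h,h+a\}\in\cQ_2$, and for $\orbt{a,h}$ with $a,h\in\Omega_1(A)$ I take the Klein four-group $\{0,a,h,a+h\}\in\cQ_3$. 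In each case one checks that the four points are distinct and that the defining conditions of the relevant $\cQ_i$ are met.

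The step I expect to be the real obstacle is the bookkeeping forced by the overlap $\cT_1\cap\cT_2\neq\emptyset$: by Lemma~\ref{lem:R2}(i) one has $\orbt{a,-a}\in\cT_2$ exactly when $a\in\Omega_2(A)$, so the orbits coming from order-$4$ elements belong to both families, and the degenerate coincidences $h=2a$, $h=h_0$ and $2a=h_0$ must each be tracked so that every orbit is routed to exactly one block type. Once this case division is nailed down, the counting identity $|S|=4|\cB_0|$ upgrades mere existence to the sharp ``exactly one'' conclusion with no further effort.
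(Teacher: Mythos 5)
Your proposal is correct and follows essentially the same route as the paper: the forward direction is checked on one representative block from each of $\cQ_1$, $\cQ_2$, $\cQ_3$, existence in the converse is shown by exhibiting a block for each orbit shape (with the same degenerate cases $h=h_0$, $2a=h$, $2a=h_0$ handled via translates of $\cQ_1$-blocks), and uniqueness follows from the count $4|\cB_0|=|\{T\mid\orb{\hA}{T}\in\cT_1\cup\cT_2\}|$ supplied by Lemmas~\ref{lem:NT} and \ref{lem:NQ}. Your reformulation of the last step as ``a surjection between equinumerous finite sets is a bijection'' is the same double-counting argument the paper phrases as ``a sum of terms each at least $1$ equals the number of terms.''
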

\begin{proof}
Suppose $B\in\cB_0$. 

If $\orb{\hA}{B}\in\cQ_1$, then we may
assume without loss of generality that $B=\{0,a,-a,h_0\}$
for some $a\in A\setminus\Omega_1(A)$. If $T\in
\{\{0,a,-a\},\{a,-a,h_0\}=\{0,a+h_0,-(a+h_0)\}+h_0\}$,
then $\orb{\hA}{T}\in\cT_1$. If $T=\pm \{0,a,h_0\}$, then
$\orb{\hA}{T}\in\cT_2$.

If $\orb{\hA}{B}\in\cQ_2$, then we may
assume without loss of generality that $B=\{0,a,h,h+a\}$
for some $a\in A\setminus\Omega_1(A)$ and $h\in\Omega_1(A)
\setminus\langle h_0\rangle$ with $2a\neq h$.
Then for $T\in\{\{0,a,h\}, \{0,h+a,h\}, 
\{0,a,h+a\}=\{0,-a,h\}+a,
\{a,h,h+a\}=\{0,h-a,h\}+a\}$, we have
$\orb{\hA}{T}\in\cT_2$. 

If $\orb{\hA}{B}\in\cQ_3$, then we may
assume without loss of generality that $B=\{0,h,h',h+h'\}$
for some $h,h'\in \Omega_1(A)\setminus\{0\}$ with $h\neq h'$.
Then for $T\in\{\{0,h,h'\}, \{0,h,h+h'\}, 
\{0,h',h+h'\},
\{h,h',h+h'\}=\{0,h,h'\}+h+h'\}$, we have
$\orb{\hA}{T}\in\cT_2$. 

Conversely, suppose $\orb{\hA}{T}\in\cT_1\cup\cT_2$. 

If $\orb{\hA}{T}\in\cT_1$, then we may assume without loss
of generality that $T=\{0,a,-a\}$ for some $a\in A\setminus
\Omega_1(A)$. Then $T\subset B=\{0,a,-a,h_0\}$
and $\orb{\hA}{B}\in\cQ_1$, so $B\in\cB_0$.

If $\orb{\hA}{T}\in\cT_2$, then we may assume without loss
of generality that $T=\{0,a,h\}$ for some $a\in A$ and $h
\in\Omega_1(A)\setminus\{0,a\}$.
If $a\in\Omega_1(A)$, then
$T\subset B=\{0,a,h,h+a\}$
and $\orb{\hA}{B}\in\cQ_3$, so $B\in\cB_0$.
If $a\notin\Omega_1(A)$ and $h=h_0$, 
then $T\subset B=\{0,a,-a,h_0\}$
and $\orb{\hA}{B}\in\cQ_1$, so $B\in\cB_0$.
Suppose $a\notin\Omega_1(A)$ and $h\neq h_0$.
If $2a\neq h$, then
$T\subset B=\{0,a,h,h+a\}$
and $\orb{\hA}{B}\in\cQ_2$, so $B\in\cB_0$.
If $2a=h$, then $T\subset B=\{0,a,h,h_0+a\}=
\{0,a,-a,h_0\}+a$ and
$\orb{\hA}{B}\in\cQ_1$, so $B\in\cB_0$.

Therefore, we have shown that 
there exists at least one
$B\in\cB_0$ such that $T\subset B$.
Counting the number of pairs $(T,B)$ with $T\in\binom{A}{3}$,
$\orb{\hA}{T}\in\cT_1\cup\cT_2$ and $T\subset B\in\cB_0$
using the first part, we find
\begin{align*}
4|\cB_0|
&=\sum_{\substack{
T\in\binom{A}{3}\\ \orb{\hA}{T}\in\cT_1\cup\cT_2}}
|\{B\in\cB_0 \mid T\subset B\}|
\\ &\geq
|\{T\mid\orb{\hA}{T}\in\cT_1\cup\cT_2\}|.
\end{align*}
By Lemma~\ref{lem:NT} and Lemma~\ref{lem:NQ}, we
conclude $|\{B\in\cB_0\mid T\subset B\}|=1$ whenever
$\orb{\hA}{T}\in\cT_1\cup\cT_2$.
\end{proof}

\section{Reversible Steiner quadruple systems}

In this section, we let
$A$ be an abelian group of order $v\equiv2$ or $4\pmod6$, 
and use the notation
introduced in Section~2--4. Specifically, we continue to use
notation introduced in (\ref{eq:defT})--(\ref{eq:defE}),
Definition~\ref{dfn:K}, (\ref{eq:T1})--(\ref{eq:T2}),
(\ref{eq:Q1})--(\ref{eq:Q3}), and (\ref{eq:B0}).

A quadruple $B\in\binom{A}{4}$ is
said to be {\em symmetric} if $\orb{\hA}{B}=\orb{A}{B}$.

\begin{lem}\label{lem:symb}
A quadruple $B\in\binom{A}{4}$ is symmetric if and only 
if $\orb{\hA}{B}\in\cQ'\cup\cQ''\cup\cQ'''$, where
\begin{align}
\cQ'&=\{\orbt{a,b,a+b}\mid \{0,a,b,a+b\}\in\binom{A}{4}\},
\label{eq:lemsymb1}\\
\cQ''&=\{\orbt{a,-a,h}\mid \{0,a,-a,h\}\in\binom{A}{4},\;
2h=0\},\label{eq:lemsymb2}\\
\cQ'''&=\{\orbt{h,h',h''}\mid \{h,h',h''\}\in\binom{\Omega_1(A)\setminus\{0\}}{3}\}.\label{eq:lemsymb3}
\end{align}
In particular, every member of $\cB_0$ is symmetric.
Moreover, $\orb{\hA}{B}\in\cE$ implies that $B$ is symmetric.
\end{lem}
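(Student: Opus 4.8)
The plan is to translate symmetry into a statement about a single reflection in $\hA$, and then classify the resulting quadruples by a parity argument on fixed points. Since both symmetry of $B$ and membership of $\orb{\hA}{B}$ in $\cQ'\cup\cQ''\cup\cQ'''$ are $\hA$-invariant, I would assume throughout that $0\in B$, writing $B=\{0,a,b,c\}$ with $a,b,c$ distinct and nonzero, and freely replace $B$ by a translate or by $-B$ when convenient.

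For the ``if'' direction I would just verify symmetry on a representative of each family. If $\orb{\hA}{B}\in\cQ'$, then $B=\{0,a,b,a+b\}$ and $-B+(a+b)=B$; if $\orb{\hA}{B}\in\cQ''$, then $B=\{0,a,-a,h\}$ with $2h=0$, so $-B=B$; and if $\orb{\hA}{B}\in\cQ'''$, every element of $B$ is an involution, so again $-B=B$. In every case $-B\in\orb{A}{B}$, which is precisely the condition $\orb{\hA}{B}=\orb{A}{B}$ defining symmetry.

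For the ``only if'' direction, the key observation is that $B$ is symmetric if and only if $-B=B+x$ for some $x\in A$, equivalently the involution $\tau_x\in\hA$ given by $\tau_x(y)=x-y$ preserves $B$. I would then restrict $\tau_x$ to the four-element set $B$: an element $y\in B$ is fixed exactly when $2y=x$, and since $\tau_x|_B$ is an involution on a set of even size it has $0$, $2$, or $4$ fixed points. The four-fixed-point case forces $x=0$ (as $0\in B$) and makes every element of $B$ an involution, giving $\cQ'''$. The zero-fixed-point case splits $B$ into two $\tau_x$-transpositions; the one containing $0$ is $\{0,x\}$, so with $x=a$ the other pair is $\{b,a-b\}$, and the identity $\{0,a,b,a-b\}=\{0,b,a-b,b+(a-b)\}$ shows $\orb{\hA}{B}\in\cQ'$.

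The main obstacle, and the case requiring the most care, is the two-fixed-point case, because the correct representative emerges only after a translation. If $0$ is itself fixed then $x=0$, $\tau_0$ is negation, and $B=\{0,h,a,-a\}$ with $2h=0$, already of type $\cQ''$. If instead $0$ lies in the transposition, it is swapped with $x\neq0$, the two fixed points $p,q$ satisfy $2p=2q=x$, so $h:=p-q$ is a nonzero involution and $B=\{0,x,p,q\}$; translating by $-p$ yields $B-p=\{0,p,-p,h\}$ with $2h=0$, so $\orb{\hA}{B}=\orbt{p,-p,h}\in\cQ''$. This exhausts all cases. The two ``moreover'' assertions are then immediate from the equivalence: each generator of $\cQ_1,\cQ_2,\cQ_3$ visibly has the shape defining $\cQ'',\cQ',\cQ'''$ respectively, so every member of $\cB_0$ is symmetric; and every member of $\cE$ is of the form $\orbt{a,b,a+b}\in\cQ'$, hence symmetric.
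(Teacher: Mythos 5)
Your proof is correct, and its ``only if'' direction takes a genuinely different route from the paper's. The paper starts from the set identity $-B=B-y$ for some $y\in B$, enumerates the four choices of $y$, and then resolves each resulting equation $\{-a,-b,-c\}=\{a,b,c\}$, $\{-b,-c\}=\{b-a,c-a\}$, etc.\ by ad hoc sub-cases, at the end of which the types $\cQ'$, $\cQ''$, $\cQ'''$ emerge somewhat unpredictably (several branches are only located in $\cQ'\cup\cQ''$ without deciding which). You instead package the same condition as ``the reflection $\tau_x$ preserves $B$'' and classify by the number of fixed points of $\tau_x|_B$, which, being an involution of a $4$-set, is $0$, $2$ or $4$; these three counts correspond exactly to $\cQ'$, $\cQ''$ and $\cQ'''$. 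This makes the case split exhaustive by construction and predicts the shape of the answer in advance, at the cost of the small observation (which you state and which is easy) that symmetry is an $\hA$-invariant property, so one may normalize $0\in B$. Your handling of the delicate branch --- two fixed points with $0$ not fixed --- is right: the fixed points $p,q$ satisfy $2p=2q=x$, so $h=p-q$ is a nonzero involution and $B-p=\{0,p,-p,h\}$ lands in $\cQ''$ (the four-element condition is automatic since $B-p$ is a translate of a $4$-set). The ``if'' direction and the two closing assertions match the paper's verification; the only cosmetic difference is that you place $\cQ_3$ inside $\cQ'''$ while the paper places it inside $\cQ'$, and both containments hold.
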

\begin{proof}
Suppose $B=\{0,a,b,a+b\}\in\binom{A}{4}$. Then $B^\sigma
=B-(a+b)\in\orb{A}{B}$. Thus $\orb{\hA}{B}\in\cQ'$ implies
that $B$ is symmetric.

Suppose $B=\{0,a,-a,h\}\in\binom{A}{4}$, $2h=0$. Then $B^\sigma
=B\in\orb{A}{B}$. Thus $\orb{\hA}{B}\in\cQ''$ implies
that $B$ is symmetric.

Suppose $B=\{0,h,h',h''\}\in\binom{A}{4}$, $\{h,h',h''\}\in\binom{\Omega_1(A)\setminus\{0\}}{3}$. 
Then $B^\sigma=B\in\orb{A}{B}$. Thus $\orb{\hA}{B}\in\cQ'''$ 
implies that $B$ is symmetric.

Conversely, suppose that $B=\{0,a,b,c\}\in\binom{A}{4}$
is symmetric. Then
\begin{align*}
\{-a,-b,-c\}\in&\{\{a,b,c\},\{-a,b-a,c-a\},
\\ &\quad
\{-b,a-b,c-b\},\{-c,a-c,b-c\}\},
\end{align*}
hence
\begin{align*}
&\{-a,-b,-c\}=\{a,b,c\}\text{ or}\\
&\{-b,-c\}=\{b-a,c-a\}\text{ or}\\
&\{-a,-c\}=\{a-b,c-b\}\text{ or}\\
&\{-a,-b\}=\{a-c,b-c\}.
\end{align*}
Observe
\begin{align*}
&\{-a,-b,-c\}=\{a,b,c\}
\\ &\iff
2a=0\text{ and }\{b,c\}=\{-b,-c\}\text{ or}\\
&\qquad\quad-a=b\text{ and }\{a,c\}=\{-b,-c\}\text{ or}\\
&\qquad\quad-a=c\text{ and }\{a,b\}=\{-b,-c\}\text{ or}
\nexteqv
2a=2b=2c=0\text{ or }(2a,b)=(0,-c)\text{ or}\\
&\qquad\quad-a=b\text{ and }2c=0\text{ or}\\
&\qquad\quad-a=c\text{ and }2b=0
\nexteqv
2a=2b=2c=0\text{ or }\\
&\qquad\quad(2a,b)=(0,-c)\text{ or}\\
&\qquad\quad(2b,c)=(0,-a)\text{ or}\\
&\qquad\quad(2c,a)=(0,-b)
\\ &\implies
\orb{\hA}{B}\in\cQ''\cup\cQ'''.
\end{align*}
Also, 
\begin{align*}
&\{-b,-c\}=\{b-a,c-a\}
\\ &\iff
(-b,-c)=(b-a,c-a)\text{ or}\\
&\qquad\quad(-b,-c)=(c-a,b-a)
\nexteqv
B=\{0,b,-b,c-b\}+b\text{ and }2(c-b)=0\text{ or}\\
&\qquad\quad a=b+c
\\ &\implies
\orb{\hA}{B}\in\cQ'\cup\cQ''.
\end{align*}
Similarly, 
$\{-a,-c\}=\{a-b,c-b\}$ or $\{-a,-b\}=\{a-c,b-c\}$
implies $\orb{\hA}{B}\in\cQ'\cup\cQ''$.

It follows from the definitions (\ref{eq:Q1}), (\ref{eq:Q2}), (\ref{eq:Q3}),
(\ref{eq:lemsymb1}), (\ref{eq:lemsymb2})
that $\cQ_1\subset\cQ''$, $\cQ_2\cup\cQ_3\subset\cQ'$. Thus
by (\ref{eq:B0}), 
we have $\orb{\hA}{B}\in\cQ'\cup\cQ''$ for any
$B\in\cB_0$. This implies that
every member of $\cB_0$ is symmetric.

Finally, as $\cE\subset\cQ'$, $\orb{\hA}{B}\in\cE$ implies
that $B$ is symmetric.
\end{proof}

We remark that, if $v\equiv2\pmod{4}$, then
it is shown in \cite{AMMS}
that the set of all symmetric blocks
forms an $A$-invariant $3$-$(v,4,3)$ design. 

\begin{lem}\label{lem:Kl1}
If $(A,\cB)$ is an $A$-reversible $\SQS(v)$ such that
$\cB_0\subset\cB$, then 
$\cB\setminus\cB_0\subset\{B\mid\orb{\hA}{B}\in\cE\}$.
\end{lem}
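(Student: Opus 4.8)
The plan is to take an arbitrary block $B\in\cB\setminus\cB_0$ and show directly that $\orb{\hA}{B}\in\cE$. Since every block of an $A$-reversible $\SQS(v)$ is symmetric, Lemma~\ref{lem:symb} already tells us $\orb{\hA}{B}\in\cQ'\cup\cQ''\cup\cQ'''$, so the task is to pin this orbit down into $\cE$ (which is contained in $\cQ'$). The pivotal observation, which I expect to be the crux, is that $B$ can contain \emph{no} triple $T$ with $\orb{\hA}{T}\in\cT_1\cup\cT_2$. Indeed, Lemma~\ref{lem:ps1} provides, for each such $T$, a unique block $B'\in\cB_0\subset\cB$ with $T\subset B'$; since $\cB$ is an $\SQS(v)$, this $B'$ is the one and only block of $\cB$ through $T$, and as $B\notin\cB_0$ we have $B\neq B'$, whence $T\not\subset B$.

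Granting this, every one of the four triples $T\subset B$ satisfies $\orb{\hA}{T}\in\cT$, because by Lemma~\ref{lem:T} the orbit of any triple lies in exactly one of $\cT_1,\cT_2,\cT$. I would then use this to eliminate the last two possibilities for $\orb{\hA}{B}$. If $\orb{\hA}{B}\in\cQ'''$, then $B=\{0,h,h',h''\}$ with $h,h'$ distinct nonzero involutions, so $\{0,h,h'\}$ is a triple with $\orbt{h,h'}\in\cT_2$ (as $2h=0$), a contradiction. If $\orb{\hA}{B}\in\cQ''$, then $B=\{0,a,-a,h\}$ with $2a\neq0$, so $\{0,a,-a\}$ yields $\orbt{a,-a}\in\cT_1$, again a contradiction. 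Hence $\orb{\hA}{B}\in\cQ'$, say $\orb{\hA}{B}=\orbt{a,b,a+b}$ with $\{0,a,b,a+b\}\in\binom{A}{4}$.

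To finish, recall from Lemma~\ref{lem:blks} that the triple-orbits inside $\{0,a,b,a+b\}$ are exactly $\orbt{a,b}$ and $\orbt{a,a+b}$; by the previous paragraph both lie in $\cT$. I would then invoke the same equivalence already used in the proof of Lemma~\ref{lem:nbs}: since $\orbt{a,b}\in\cT$, Lemma~\ref{lem:BT}(i) gives $\orbt{a,a+b}\in\cT\iff 0\notin\{2a+b,a+2b,2a+2b\}$, and Lemma~\ref{lem:cE}(ii) shows the latter condition is equivalent to $\orbt{a,b,a+b}\in\cE$. Since $\orbt{a,a+b}\in\cT$, we conclude $\orb{\hA}{B}=\orbt{a,b,a+b}\in\cE$, as required.

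The conceptual heart of the argument is the first paragraph: converting the exhaustive covering of $\cT_1\cup\cT_2$-triples by $\cB_0$ (Lemma~\ref{lem:ps1}) plus the $\SQS$ uniqueness property into the statement that every block outside $\cB_0$ avoids those triples entirely. By contrast, the casework ruling out $\cQ''$ and $\cQ'''$ and the final membership check are routine, the latter being exactly the reduction (under $\orbt{a,b}\in\cT$) of the $\cE$-defining conditions of Lemma~\ref{lem:cE}(ii) to the single membership $0\notin\{2a+b,a+2b,2a+2b\}$ that is already carried out inside Lemma~\ref{lem:nbs}.
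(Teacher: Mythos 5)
Your proof is correct, and its skeleton coincides with the paper's: both start from Lemma~\ref{lem:symb} to place $\orb{\hA}{B}$ in $\cQ'\cup\cQ''\cup\cQ'''$, and both ultimately rest on Lemma~\ref{lem:ps1} together with the uniqueness of the block through a given triple in an $\SQS$. Where you diverge is in the handling of the $\cQ'$ case. The paper argues contrapositively and computationally: assuming $\orb{\hA}{B}\notin\cE$, it reads off from the failed condition of Lemma~\ref{lem:cE}(ii) an explicit triple $T\subset B$ (after a translation such as $B-a$) with $\orb{\hA}{T}\in\cT_1\cup\cT_2$, and then forces $B\in\cB_0$. You instead front-load the observation that a block outside $\cB_0$ contains no $\cT_1\cup\cT_2$-triple at all, so all its triple-orbits lie in $\cT$ by Lemma~\ref{lem:T}, and then deduce $\cE$-membership positively via Lemma~\ref{lem:BT}(i) and the equivalence already worked out inside the proof of Lemma~\ref{lem:nbs}; this also disposes of $\cQ''$ and $\cQ'''$ uniformly, whereas the paper treats those two cases by directly exhibiting the $\cB_0$-block sharing a triple with $B$. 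Your route avoids the paper's explicit case analysis on which $\cE$-condition fails, at the price of leaning on the degree-computation lemmas of Section~3. One small citation point: Lemma~\ref{lem:blks} is stated under the hypothesis $\orbt{a,b,a+b}\in\cE$, which is exactly what you are trying to prove; you should instead cite the identities (\ref{eq:blks3}) and (\ref{eq:blks4}) (or (\ref{eq:B0a})) directly, since that half of the proof of Lemma~\ref{lem:blks} does not use the hypothesis, so the fact you need --- that the triple-orbits of $\{0,a,b,a+b\}$ are among $\orbt{a,b}$ and $\orbt{a,a+b}$ --- holds unconditionally.
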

\begin{proof}
Let $B\in\cB$. Since $B$ is symmetric,
$\orb{\hA}{B}\in\cQ'\cup\cQ''\cup\cQ'''$ by Lemma~\ref{lem:symb}.

First suppose
$\orb{\hA}{B}\in\cQ''$. Then we may assume without loss
of generality $B=\{0,a,-a,h\}$ for some $a\in A$ and
$h\in\Omega_1(A)\setminus\{0\}$.
Since $\{0,a,-a\}\subset\{0,a,-a,h_0\}\in\cB_0\subset\cB$,
we obtain $B=\{0,a,-a,h_0\}\in\cB_0$.

Next suppose
$\orb{\hA}{B}\in\cQ'''$. Then we may assume without loss
of generality $B=\{0,h,h',h''\}$ for some 
$h,h',h''\in\Omega_1(A)\setminus\{0\}$.
Since $\{0,h,h'\}\subset\{0,h,h',h+h'\}\in\cB_0\subset\cB$,
we obtain $B=\{0,h,h',h+h'\}\in\cB_0$.

Finally, suppose $\orb{\hA}{B}\in\cQ'$.
It suffices to show that 
$\orb{\hA}{B}\notin\cE$ implies $B\in\cB_0$.
We may assume without loss
of generality $B=\{0,a,b,a+b\}$ for some $a,b\in A$.
Since $\orb{\hA}{B}\notin\cE$, Lemma~\ref{lem:cE}(ii) implies
$0\in\{2a,2b\}$ or $\{\pm a,\pm2a\}\cap\{\pm b,\pm2b\}
\neq\emptyset$. If the former occurs, then $B$ contains
a triple $T$ with $\orb{\hA}{T}\in\cT_2$.
If the latter occurs with $0 \notin \{2a, 2b\}$, then
we may assume $2a\in\{b,2b\}$ by replacing
$a$ by $\pm b$ if necessary. If $2a=b$, then $B-a$
contains $T=\{0,a,-a\}$, and $\orb{\hA}{T}\in\cT_1$.
If $2a=2b$, then $B-a$ contains the triple
$T=\{0,-a,b-a\}$, and we have $\orb{\hA}{T}\in\cT_2$ since $2(b-a)=0$.
Therefore, we have shown that there exists $T\subset B$
such that $\orb{\hA}{T}\in\cT_1\cup\cT_2$. 
Note that the existence of an $\SQS(v)$ implies 
$v\equiv2$ or $4\pmod{6}$, so we can apply
Lemma~\ref{lem:ps1} to conclude that there exists
$B'\in\cB_0\subset\cB$ such that $T\subset B'$.
This forces $B=B'\in\cB_0$.
\end{proof}

\begin{thm}\label{thm:Kohler}
Let $A$ be an abelian group of order $v\equiv2$ or $4\pmod6$.
For a subset $\cB$ of $\binom{A}{4}$ containing $\cB_0$,
the incidence structure $(A,\cB)$ is an $A$-reversible
$\SQS(v)$ if and only if 
\[
\cB=\cB_0\cup\{B\in\binom{A}{4}\mid\orb{\hA}{B}\in\cF\}
\]
for some $1$-factor $\cF$ of the K\"ohler graph of $A$.
\end{thm}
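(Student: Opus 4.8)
The plan is to translate the $\SQS(v)$ condition — every triple in $\binom{A}{3}$ lies in exactly one block — into an orbit-wise statement, and then exploit the decomposition $\binom{A}{3}/\hA=\cT_1\cup\cT_2\cup\cT$ of Lemma~\ref{lem:T}. The guiding idea is that $\cB_0$ should account for exactly the triples whose $\hA$-orbit lies in $\cT_1\cup\cT_2$, whereas the extra blocks (forced into $\cE$ by Lemma~\ref{lem:Kl1}) should account for exactly those whose orbit lies in $\cT$; and the covering condition for the latter will turn out to be precisely the $1$-factor condition.

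Before either implication I would set up one counting observation used throughout. Given a block $B=\{0,a,b,a+b\}$ with $\orb{\hA}{B}\in\cE$, its four triples have $\hA$-orbits $\orbt{a,b}$ and $\orbt{a,a+b}$, the two endpoints of the edge $\orb{\hA}{B}$ (Lemma~\ref{lem:blks}), both lying in $\cT$. By Lemma~\ref{lem:onlyB}, for each such triple $T\subset B$, $B$ is the unique member of $\orb{\hA}{B}$ containing $T$. Hence, for a fixed triple $T$ with $\orb{\hA}{T}\in\cT$ and a fixed set $\cF\subset\cE$, the number of blocks with orbit in $\cF$ that contain $T$ equals the number of edges of $\cF$ incident to $\orb{\hA}{T}$. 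Dually, no block with orbit in $\cE$ contains a triple whose orbit lies in $\cT_1\cup\cT_2$, and no block of $\cB_0$ contains a triple whose orbit lies in $\cT$ (by Lemma~\ref{lem:ps1} together with the disjointness in Lemma~\ref{lem:T}); so the two families of blocks cover disjoint sets of triples.

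For the ``if'' direction I would take a $1$-factor $\cF$ and set $\cB=\cB_0\cup\{B\mid\orb{\hA}{B}\in\cF\}$. Both pieces are unions of $\hA$-orbits and every block is symmetric (Lemma~\ref{lem:symb}), so $(A,\cB)$ is $A$-reversible, and it remains to check the $\SQS$ property on a fixed $T\in\binom{A}{3}$. If $\orb{\hA}{T}\in\cT_1\cup\cT_2$, the $\cE$-part misses $T$ while Lemma~\ref{lem:ps1} gives a unique $B\in\cB_0$ containing it. If $\orb{\hA}{T}\in\cT$, then $\cB_0$ misses $T$ and the preliminary count shows $T$ lies in exactly one block with orbit in $\cF$, since $\cF$ meets $\orb{\hA}{T}$ in exactly one edge. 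For the ``only if'' direction I would start from an $A$-reversible $\SQS(v)$ with $\cB_0\subset\cB$; Lemma~\ref{lem:Kl1} and $\hA$-invariance give $\cB\setminus\cB_0=\{B\mid\orb{\hA}{B}\in\cF\}$ with $\cF\subset\cE$, and for each vertex $\orbt{a,b}\in\cT$ I would pick a triple $T$ in that orbit: since $\cB_0$ misses $T$, the $\SQS$ property forces $T$ into exactly one block of $\cB\setminus\cB_0$, which by the count means exactly one edge of $\cF$ meets $\orbt{a,b}$. Thus $\cF$ is a $1$-factor.

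I expect the genuine content to be concentrated entirely in the preliminary count, where Lemma~\ref{lem:onlyB} converts ``degree of a vertex in $\cF$'' into ``multiplicity of coverage of a representative triple''; getting the multiplicity-one statement precisely right, and checking that the $\cB_0$-part and the $\cE$-part cover disjoint triple-orbit classes so that there is no interference, is what makes both implications reduce to mere bookkeeping over the lemmas \ref{lem:T}, \ref{lem:blks}, and \ref{lem:ps1}.
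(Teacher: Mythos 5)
Your proposal is correct and follows essentially the same route as the paper's proof: it uses the decomposition of Lemma~\ref{lem:T}, reduces the block set modulo $\cB_0$ via Lemmas~\ref{lem:Kl1} and \ref{lem:ps1}, and converts coverage of $\cT$-triples into vertex degrees in $\cF$ via Lemmas~\ref{lem:blks} and \ref{lem:onlyB}. The only difference is presentational — the paper writes a single chain of equivalences where you split into two implications — so there is nothing substantive to add.
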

\begin{proof}
By Lemma~\ref{lem:ps1}, we have
\begin{equation}\label{eq:thmK1}
|\{B\in\cB_{0}\mid T\subset B\}|=1\text{ for }
\forall T\in\binom{A}{3}\text{ with }\orb{\hA}{T}\in\cT_1\cup\cT_2,
\end{equation}
and also, together with Lemma~\ref{lem:T}, we have
\begin{equation}\label{eq:thmK2}
|\{B\in\cB_0\mid T\subset B\}|=0\text{ for }
\forall T\in\binom{A}{3}\text{ with }\orb{\hA}{T}\in\cT.
\end{equation}
Thus
\begin{align*}
&\text{$(A,\cB)$ is an $A$-reversible $\SQS(v)$}
\\ &\iff
\text{$(A,\cB)$ is an $A$-reversible $\SQS(v)$,}
\\ &\qquad\quad
\cB\setminus\cB_0\subset\{B\mid\orb{\hA}{B}\in\cE\}
&&\text{(by Lemma~\ref{lem:Kl1})}
\nexteqv
\text{$\cB$ is $\hA$-invariant,}\\
&\qquad\quad\text{every member of $\cB$ is symmetric,}\\
&\qquad\quad |\{B\in\cB\mid T\subset B\}|=1\text{ for }
\forall T\in\binom{A}{3},
\\ &\qquad\quad
\cB\setminus\cB_0\subset\{B\mid\orb{\hA}{B}\in\cE\}
\nexteqv
\text{$\cB$ is $\hA$-invariant,}\\
&\qquad\quad |\{B\in\cB\mid T\subset B\}|=1\text{ for }
\forall T\in\binom{A}{3},
\\ &\qquad\quad
\cB\setminus\cB_0\subset\{B\mid\orb{\hA}{B}\in\cE\}
&&\text{(by Lemma~\ref{lem:symb})}
\nexteqv
\text{$\cB\setminus\cB_0$ is $\hA$-invariant,}\\
&\qquad\quad
\cB\setminus\cB_0\subset\{B\mid\orb{\hA}{B}\in\cE\},\\
&\qquad\quad |\{B\in\cB\mid T\subset B\}|=1\text{ for }
\forall T\in\binom{A}{3}
\nexteqv
\cB\setminus\cB_0=\{B\mid\orb{\hA}{B}\in\cF\}
\text{ for some }\cF\subset\cE,\\ 
&\qquad\quad |\{B\in\cB\mid T\subset B\}|=1\text{ for }
\forall T\in\binom{A}{3}
\nexteqv
\cB\setminus\cB_0=\{B\mid\orb{\hA}{B}\in\cF\}
\text{ for some }\cF\subset\cE,\\ 
&\qquad\quad |\{B\in\cB\setminus\cB_0\mid T\subset B\}|=1\\
&\qquad\quad \text{ for }
\forall T\in\binom{A}{3}\text{ with }\orb{\hA}{T}\in\cT
&&\text{(by (\ref{eq:thmK1}),(\ref{eq:thmK2}))}
\nexteqv
\cB\setminus\cB_0=\{B\mid\orb{\hA}{B}\in\cF\}
\text{ for some }\cF\subset\cE,
\\ &\qquad\quad
|\{B\mid\orb{\hA}{B}\in\cF,\;T\subset B\}|=1
\\ &\qquad\quad
\text{ for }
\forall T\in\binom{A}{3}\text{ with }\orb{\hA}{T}\in\cT.
\end{align*}
Now, for $T\in\binom{A}{3}$ with $\orb{\hA}{T}\in\cT$,
\begin{align*}
&|\{B\mid\orb{\hA}{B}\in\cF,\;T\subset B\}|
\\ &=
|\{B\mid\orb{\hA}{B}\in\cF,\text{ $\orb{\hA}{T}$
is incident with $\orb{\hA}{B}$}\}|
\nexteq
|\{\orb{\hA}{B}\in\cF\mid\text{$\orb{\hA}{T}$
is incident with $\orb{\hA}{B}$}\}|
\end{align*}
by Lemma~\ref{lem:onlyB}.
Therefore,
\begin{align*}
&\text{$(A,\cB)$ is an $A$-reversible $\SQS(v)$}
\\ &\iff
\cB\setminus\cB_0=\{B\mid\orb{\hA}{B}\in\cF\}
\text{ for some $1$-factor }\cF\subset\cE.
\end{align*}
\end{proof} 

\begin{rem}\label{rem:Kohler}
We note that not all $A$-reversible $\SQS(v)$ are
constructed using 
Theorem~\ref{thm:Kohler}. 
Let
$A = \langle h_0\rangle \oplus \langle
 h \rangle \oplus \langle c \rangle$, where
both $h_0$ and $h$ have order $2$, and $c$ has order $5$. 
Then, an $A$-reversible $\SQS(20)$ can be constructed 
by the quadruples given in Table~\ref{tbl-1}.
We note that for the block 
$B=\{0,c,-c+h,h\}$, we have
$B\notin\cB_0$ and $\orb{\hA}{B}\notin\cE$.
\end{rem}
\begin{table}[h]
\begin{center}
\begin{tabular}{|l|} \hline
$[h_0,h,h_0+h]$\\
\hline
$[c,-c,h_0]$\\
$[2c,-2c,h_0]$\\
$[c+h,-c+h,h_0]$\\
$[2c+h,-2c+h,h_0]$\\
\hline
$[h_0+h,c+h_0,c+h]$\\
$[h_0+h,c,c+h_0+h]$\\
$[h_0+h,2c,2c+h_0+h]$\\
$[h_0+h,2c+h_0,2c+h]$\\
$[h,c+h_0,c+h_0+h]$\\
$[h,2c+h_0,2c+h_0+h]$\\
\hline
$[c+h_0,-2c+h_0,-c]$\\
$[c+h_0+h,-2c+h_0+h,-c]$\\
$[2c+h_0,c+h_0+h,-2c+h]$\\
$[2c+h_0+h,c+h,-2c+h_0]$\\
\hline
$[c,-c+h,h]$\\
$[2c,-2c+h,h]$\\
\hline
\end{tabular}
\end{center}
\caption{$\SQS(20)$ with $\cB_0\not\subset\cB$}
\label{tbl-1}
\end{table}

\begin{thm}\label{thm:Series2n}
Let $n\ge3$ be an integer, and let
$A$ be an abelian group of order $2^n$ whose
exponent is $2$ or $4$. Then
there exists an $A$-reversible $\SQS(2^n)$.
\end{thm}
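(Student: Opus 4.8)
The plan is to invoke Theorem~\ref{thm:Kohler}, which reduces the construction of an $A$-reversible $\SQS(2^n)$ to exhibiting a $1$-factor of the K\"ohler graph $\cG$ of $A$. First I would record that $2^n\equiv2$ or $4\pmod6$ for every $n\ge1$: indeed $2^n$ is even and $2^n\equiv2$ or $1\pmod3$, so exactly one of the two residues occurs. Hence the hypothesis of Theorem~\ref{thm:Kohler} is satisfied and the reduction is legitimate. Writing $A\cong\Z_2^a\times\Z_4^b$ with $a+2b=n$, the entire problem becomes the graph-theoretic one of producing a perfect matching of $\cG$.

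Next I would apply Lemma~\ref{lem:2gen2}: every connected component of $\cG$ is isomorphic to a connected component of the K\"ohler graph of a subgroup of $A$ generated by two elements. Since $A$ has exponent dividing $4$, any such subgroup is a quotient of $\Z_4\times\Z_4$, hence isomorphic to one of $\Z_2$, $\Z_4$, $\Z_2^2$, $\Z_2\times\Z_4$, or $\Z_4^2$. The heart of the argument is to determine the K\"ohler graphs of these five groups. I would show that the first four have \emph{empty} K\"ohler graph, i.e.\ $\cT=\emptyset$, using Lemma~\ref{lem:cE}(i): a vertex $\orbt{a,b}$ requires $2a\neq0$, $2b\neq0$, and $2a\neq2b$. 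In $\Z_2$ and $\Z_2^2$ no element satisfies $2a\neq0$; in $\Z_4$ and $\Z_2\times\Z_4$ every element of order $4$ doubles to the unique involution of the cyclic summand of order $4$, forcing $2a=2b$. In each case \eqref{eq:abT} fails, so $\cT=\emptyset$. For $\Z_4^2$, Example~\ref{exam:Z4Z4} already records that the K\"ohler graph is the $3$-cube, which is connected.

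Finally I would assemble the pieces. Because the only two-generated subgroup with a nonempty K\"ohler graph is $\Z_4^2$, and its graph is the connected $3$-cube, Lemma~\ref{lem:2gen2} forces every connected component of $\cG$ to be isomorphic to the $3$-cube; when $b\le1$ there are no $\Z_4^2$-subgroups and $\cG$ has no vertices at all. The $3$-cube has a perfect matching, for instance $\{v_1v_2,v_3v_4,v_5v_6,v_7v_8\}$ in the labelling of Example~\ref{exam:Z4Z4}. Taking the union of such matchings over all components yields a $1$-factor $\cF$ of $\cG$ (the empty matching when $\cG$ is vertexless). Theorem~\ref{thm:Kohler} then delivers the desired $A$-reversible $\SQS(2^n)$ as $\cB=\cB_0\cup\{B\mid\orb{\hA}{B}\in\cF\}$.

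The main obstacle is the case analysis of the five small groups; the only mild subtlety is that $\Z_2\times\Z_4$ contributes nothing, which rests on all its order-$4$ elements sharing a common double, so that $\Z_4^2$ is the sole source of components. Once this is in hand, the even order ($8$) of each component guarantees the perfect matchings needed, and the conclusion is immediate.
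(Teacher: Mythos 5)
Your proposal is correct and follows essentially the same route as the paper: reduce via Theorem~\ref{thm:Kohler} to finding a $1$-factor of the K\"ohler graph, use Lemma~\ref{lem:2gen2} to pass to two-generated subgroups, observe that all of these except $\Z_4^2$ have empty K\"ohler graph, and finish with the $3$-cube of Example~\ref{exam:Z4Z4}. Your explicit verification that $\Z_2$, $\Z_4$, $\Z_2^2$, $\Z_2\times\Z_4$ yield $\cT=\emptyset$ merely fills in a step the paper states without proof.
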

\begin{proof}
By Theorem~\ref{thm:Kohler},
it suffices to show that the 
K\"ohler graph of $A$ has a $1$-factor.
By Lemma~\ref{lem:2gen2}, 
it suffices to show that the 
K\"ohler graph of
$\Z_2^{\epsilon_1} \oplus \Z_4^{\epsilon_2}$ 
has a $1$-factor whenever $\epsilon_1+\epsilon_2\leq2$.
The K\"ohler graph of
$\Z_2^{\epsilon_1} \oplus \Z_4^{\epsilon_2}$, 
where $\epsilon_1+\epsilon_2\leq2$
is empty unless $(\epsilon_1,\epsilon_2)=(0,2)$.
By Example~\ref{exam:Z4Z4},
the K\"ohler graph of $\Z_4^2$ is a $3$-cube
and hence has a $1$-factor.
\end{proof}

\section{Abelian groups with cyclic Sylow $2$-subgroup}

As in Section~2, we let
$A$ be an abelian group of order $v$, and use the notation
introduced in Section~2.
Moreover, in this section, we assume that the Sylow $2$-subgroup
of $A$ is cyclic.

\begin{lem} \label{lem:noncycle3:n=1} 
Suppose $a,b\in A$, and 
$\langle a,b\rangle$ is not cyclic. 
Then (\ref{eq:2ab}) holds, and
$\orbt{a,b}$ is a vertex of degree $3$
in the K\"ohler graph of $A$.
\end{lem}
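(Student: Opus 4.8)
The plan is to isolate (\ref{eq:2ab}) as the single substantive claim: once it is available, both assertions of the lemma fall out of results already proved. Accordingly I would treat (\ref{eq:2ab}) first, since it is the only point at which the hypothesis that the Sylow $2$-subgroup of $A$ is cyclic genuinely intervenes.

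To prove (\ref{eq:2ab}), write $A=P\times Q$, where $P$ is the (cyclic) Sylow $2$-subgroup and $Q$ is the odd part, and decompose $a=a_P+a_Q$, $b=b_P+b_Q$ accordingly. Since $|P|$ and $|Q|$ are coprime, one checks that $\langle a,b\rangle=\langle a_P,b_P\rangle\times\langle a_Q,b_Q\rangle$; as $\langle a_P,b_P\rangle\le P$ is cyclic, the non-cyclicity of $\langle a,b\rangle$ forces $\langle a_Q,b_Q\rangle$ to be non-cyclic. Now suppose $2a\in\langle b\rangle$. Projecting onto $Q$ gives $2a_Q\in\langle b_Q\rangle$, and since $|Q|$ is odd, multiplication by $2$ is invertible modulo $|Q|$, so $a_Q\in\langle b_Q\rangle$; but then $\langle a_Q,b_Q\rangle=\langle b_Q\rangle$ is cyclic, a contradiction. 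Hence $2a\notin\langle b\rangle$, and $2b\notin\langle a\rangle$ follows symmetrically, establishing (\ref{eq:2ab}).

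Next I would confirm that $\orbt{a,b}$ is actually a vertex, i.e. $\orbt{a,b}\in\cT$. By Lemma~\ref{lem:cE}(i) this amounts to (\ref{eq:abT}); the inclusions $\{0,b,2b\}\subset\langle b\rangle$ and $\{0,a,2a\}\subset\langle a\rangle$ reduce its two membership conditions to (\ref{eq:2ab}), while $a\neq\pm b$ (and indeed $a,b\neq0$, $a\neq b$) holds because any such coincidence would make $\langle a,b\rangle=\langle a\rangle$ or $\langle b\rangle$ cyclic. For the degree I would then invoke Lemma~\ref{lem:orbT3}, which says the degree is $3$ exactly when none of $2a+b$, $a+2b$, $2a+2b$, $3a-b$, $3a-2b$, $4a-2b$, $3b-a$, $3b-2a$, $4b-2a$ is zero. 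The vanishing of any of the seven elements $2a+b$, $2a+2b$, $3b-2a$, $4b-2a$ (each giving $2a\in\langle b\rangle$) or $a+2b$, $3a-2b$, $4a-2b$ (each giving $2b\in\langle a\rangle$) contradicts (\ref{eq:2ab}), while $3a-b=0$ or $3b-a=0$ would give $b\in\langle a\rangle$ or $a\in\langle b\rangle$ and hence cyclicity. Thus all nine are nonzero and $\orbt{a,b}$ has degree $3$.

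The main obstacle is (\ref{eq:2ab}): the remaining steps are routine verifications against the two relations it supplies. Within that step the one thing requiring care is the product decomposition $\langle a,b\rangle=\langle a_P,b_P\rangle\times\langle a_Q,b_Q\rangle$ together with the fact that doubling is an automorphism of $Q$; this is precisely where cyclicity of the Sylow $2$-subgroup is used, and it is what fails in the general abelian setting treated earlier.
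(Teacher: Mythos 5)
Your proof is correct. The only substantive step is (\ref{eq:2ab}), and there you take a different route from the paper: you split $A=P\times Q$ into its cyclic Sylow $2$-part and its odd part, note that $\langle a,b\rangle=\langle a_P,b_P\rangle\times\langle a_Q,b_Q\rangle$ with the first factor automatically cyclic, and derive a contradiction by projecting the relation $2a\in\langle b\rangle$ onto $Q$, where doubling is invertible. The paper instead works in the quotient $A/\langle b\rangle$: from $2a\in\langle b\rangle$ the coset $a+\langle b\rangle$ has order at most $2$, hence lies in the Sylow $2$-subgroup $(S+\langle b\rangle)/\langle b\rangle$ of the quotient, so $a\in S+\langle b\rangle$; and $S+\langle b\rangle$ is cyclic (cyclic $2$-part times the cyclic odd part of $\langle b\rangle$), forcing $\langle a,b\rangle$ to be cyclic. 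The two arguments are dual in flavour --- yours isolates the odd component, the paper's the $2$-component --- and both rest on the same coprime factorization; yours makes explicit where invertibility of $2$ on the odd part enters, while the paper's avoids choosing a complement to the Sylow $2$-subgroup. The remaining steps (membership in $\cT$ via Lemma~\ref{lem:cE}(i), and the degree via Lemma~\ref{lem:orbT3}) coincide with the paper's, which simply cites those lemmas; your case-by-case verification that all nine elements are nonzero is a correct expansion of that citation (note in passing that $3a-b=0$ and $3b-a=0$ can also be excluded from (\ref{eq:2ab}) alone, since $b=3a$ gives $2b=6a\in\langle a\rangle$).
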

\begin{proof}
Suppose contrary, that $2a\in\langle b\rangle$.
Let $S$ denote the Sylow $2$-subgroup of $A$.
Since $a+ \langle b\rangle$ has order $2$ in 
$A/ \langle b\rangle$, 
it belongs to the Sylow $2$-subgroup 
$(S+\langle b\rangle)/\langle b\rangle$ of
$A/ \langle b\rangle$.
Hence we have $a \in S + \langle b\rangle$. Since $S$ is the
Sylow $2$-subgroup which is cyclic, $S+\langle b\rangle$ is also
cyclic. This implies that $\langle a, b\rangle$ is cyclic,
contradicting the assumption. This proves (\ref{eq:2ab}).

It follows from (\ref{eq:defT}) that $\orbt{a,b}$ is a 
vertex of $\cG$. Also, it follows from Lemma~\ref{lem:orbT3}
that $\orbt{a,b}$ has degree $3$.
\end{proof}

\begin{rem}
In general, there may be vertices of degree less than $3$ 
in the K\"ohler graph of an abelian group $A$, if the Sylow
$2$-subgroup is not cyclic.
Indeed, let
$A=\langle x\rangle\oplus\langle y\rangle=\Z_2\oplus\Z_{2m}$ for some
$m\ge3$. Let $a=x+y$, $b=-y$. Then $a\ne\pm b$, $2a=2y\notin
\{0,-y,-2y\}=\{0,b,2b\}$, and $2b=-2y\notin\{0,x+y,2y\}
=\{0,a,2a\}$. Thus $\orbt{a,b}\in\cT$ and $2a+2b=0$. It
follows from
Lemmas~\ref{lem:nbs} and \ref{lem:orbT3}
that the vertex $\orbt{a,b}$ of
the K\"ohler graph of $A$ has degree less than $3$.
\end{rem}

\begin{lem}\label{lem:3reg2ec}
Suppose $a,b\in A$, and 
$\langle a,b\rangle$ is not cyclic. 
Then
the connected component containing $\orbt{a,b}$,
of the K\"ohler graph of $A$ 
is $3$-regular and $2$-edge-connected, 
and in particular has a $1$-factor.
\end{lem}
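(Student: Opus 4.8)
The plan is to verify the two structural properties---$3$-regularity and $2$-edge-connectedness---separately, and then to invoke Petersen's classical theorem that every finite bridgeless cubic graph admits a $1$-factor. Throughout, let $\cC$ denote the connected component of the K\"ohler graph containing $\orbt{a,b}$; it is finite since $A$ is.

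First I would establish $3$-regularity. By Lemma~\ref{lem:2gen}, every vertex $\orbt{c,d}\in\cC$ satisfies $\langle c,d\rangle=\langle a,b\rangle$, which by hypothesis is not cyclic. Hence Lemma~\ref{lem:noncycle3:n=1} applies to each such vertex and shows that $\orbt{c,d}$ has degree $3$. Thus every vertex of $\cC$ has degree $3$, so $\cC$ is $3$-regular; in particular $\orbt{a,b}$ has neighbors, so $\cC$ is nontrivial.

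Next I would show that $\cC$ is $2$-edge-connected. As a connected component, $\cC$ is connected, so it remains only to check that $\cC$ has no bridge, i.e.\ that every edge lies on a cycle. An arbitrary edge of $\cC$ has the form $\orbt{c,d,c+d}\in\cE$, and by Lemma~\ref{lem:blks} one of its two endpoints is $\orbt{c,d}\in\cC$. As in the previous paragraph, $\langle c,d\rangle=\langle a,b\rangle$ is not cyclic, so the first conclusion of Lemma~\ref{lem:noncycle3:n=1} guarantees that (\ref{eq:2ab}) holds for the pair $c,d$. Lemma~\ref{lem:edgecyc} then produces a cycle through the edge $\orbt{c,d,c+d}$. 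Since every edge of $\cC$ lies on a cycle, $\cC$ has no bridge and is therefore $2$-edge-connected. Finally, a finite connected $3$-regular bridgeless graph has a $1$-factor by Petersen's theorem, which yields the last assertion.

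The argument is essentially a bookkeeping assembly of the earlier lemmas, so no step requires fresh computation. The only point needing care is the observation that the pair $(c,d)$ labelling a given edge or vertex of $\cC$ inherits the non-cyclicity of $\langle a,b\rangle$ via Lemma~\ref{lem:2gen}; this is exactly what licenses the uniform application of Lemmas~\ref{lem:noncycle3:n=1} and~\ref{lem:edgecyc} across the whole component. I expect no serious obstacle beyond citing Petersen's theorem as the sole external input.
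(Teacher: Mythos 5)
Your proposal is correct and follows essentially the same route as the paper: $3$-regularity via Lemma~\ref{lem:2gen} plus Lemma~\ref{lem:noncycle3:n=1}, bridgelessness via Lemma~\ref{lem:edgecyc}, and Petersen's theorem. The only cosmetic difference is that you parametrize an arbitrary edge directly as $\orbt{c,d,c+d}$ with endpoint $\orbt{c,d}$, whereas the paper reduces to an edge at $\orbt{a,b}$ and handles the three cases of Lemma~\ref{lem:nbs} by relabeling; both amount to the same verification.
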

\begin{proof}
First note that, by Lemma~\ref{lem:noncycle3:n=1},
$\orbt{a,b}$ is indeed a vertex of $\cG$.

Suppose that $\orbt{c,d}\in\cT$ is a vertex in the connected
component $\cC$ of the K\"ohler graph of $A$ containing $\orbt{a,b}$.
By Lemma~\ref{lem:2gen}, we have $\langle a,b\rangle=\langle c,d\rangle$,
hence $\orbt{c,d}$ has degree $3$ by Lemma~\ref{lem:noncycle3:n=1}.
This proves that $\cC$ is $3$-regular.

To prove the 2-edge-connectivity, it suffices to
find a cycle containing a given edge. 
Without loss of generality, we may assume that a given edge
is incident with the vertex $\orbt{a,b}$.
Then by Lemma~\ref{lem:nbs},
there are three possibilities:
\[
\orbt{a,b,a+b},\orbt{a,b,a-b},\orbt{a,b,b-a}.
\]
In the latter two cases, we may replace $(a,b)$ by $(a-b,b)$,
$(a,b-a)$, respectively, to reduce to the first case. 
Since $\langle a,b\rangle$ is not cyclic, 
(\ref{eq:2ab}) holds by Lemma~\ref{lem:noncycle3:n=1},
and hence by Lemma~\ref{lem:edgecyc}, there exists a cycle
containing $\orbt{a,b,a+b}$.
We thus conclude that $\cC$ is $2$-edge-connected. 
It is well known in graph theory (\cite{Petersen}, 
see also~\cite{LL}[p.59])
that any $2$-edge-connected and
$3$-regular graph has a $1$-factor, which completes the proof. 
\end{proof}

Now let $A$ be an abelian group of order $v\equiv2$ or $4\pmod6$.
We fix an element $h_0\in A$ of order $2$, and 
use the notation introduced in (\ref{eq:T1})--(\ref{eq:T2}),
(\ref{eq:Q1})--(\ref{eq:Q3}), and (\ref{eq:B0}).

\begin{lem}\label{lem:cycS2}
Let $A$ be an abelian group of order $v$ whose Sylow
$2$-subgroup is cyclic.
If $(A,\cB)$ is an $A$-reversible $\SQS(v)$, then
$\cB_0\subset\cB$.
\end{lem}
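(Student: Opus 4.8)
The plan is to exploit the fact that a cyclic Sylow $2$-subgroup contains a unique involution, so that $\Omega_1(A)=\langle h_0\rangle=\{0,h_0\}$ and $\omega_1=2$. Reading off (\ref{eq:Q2}) and (\ref{eq:Q3}), this immediately forces $\cQ_2=\cQ_3=\emptyset$, whence by (\ref{eq:B0}) we have $\cB_0=\{B\mid\orb{\hA}{B}\in\cQ_1\}$; in the notation of Lemma~\ref{lem:symb} it also gives $\cQ'''=\emptyset$ and $\cQ''=\cQ_1$. Since $\cB$ is $\hA$-invariant and $\cB_0$ is a union of full $\hA$-orbits, it suffices to put one representative of each orbit in $\cQ_1$ into $\cB$; that is, I would show $\{0,a,-a,h_0\}\in\cB$ for every $a\in A\setminus\Omega_1(A)$.

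Fix such an $a$ and set $T=\{0,a,-a\}$, so $\orb{\hA}{T}=\orbt{a,-a}\in\cT_1$. As $(A,\cB)$ is an $\SQS(v)$ there is a unique $B'\in\cB$ with $T\subset B'$, and reversibility makes $B'$ symmetric; by Lemma~\ref{lem:symb} together with $\cQ'''=\emptyset$ we obtain $\orb{\hA}{B'}\in\cQ'\cup\cQ''$. Write $B'=\{0,a,-a,x\}$. If $\orb{\hA}{B'}\in\cQ''=\cQ_1$, then $B'\in\cB_0$ contains $T$, and the uniqueness clause of Lemma~\ref{lem:ps1} identifies $B'$ with $\{0,a,-a,h_0\}$, as desired.

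The case $\orb{\hA}{B'}\in\cQ'$ is where the real work lies. A quadruple lies in $\cQ'$ exactly when it splits into two pairs of equal sum, so testing the three pairings of $\{0,a,-a,x\}$ shows $x\in\{2a,-2a\}$ (the pairing $\{0,x\},\{a,-a\}$ being excluded since $x\neq0$). Assume $x=2a$; the case $x=-2a$ follows by replacing $a$ with $-a$. The key observation is that $T+a=\{0,a,2a\}\subset B'$, while $A$-invariance of $\cB$ makes $B'+a$ the unique block of $\cB$ containing $T+a$; uniqueness then forces $B'+a=B'$. Hence $\langle a\rangle$ stabilises the $4$-set $B'$, so $|\langle a\rangle|$ divides $4$, and since $a\notin\Omega_1(A)$ we conclude $|\langle a\rangle|=4$. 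Then $2a$ is an involution, hence equals the unique one $h_0$, so again $B'=\{0,a,-a,h_0\}$.

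In both cases $\{0,a,-a,h_0\}\in\cB$; letting $a$ range over $A\setminus\Omega_1(A)$ and invoking $\hA$-invariance yields $\cB_0\subset\cB$. The main obstacle is the $\cQ'$ case, namely excluding the ``arithmetic-progression'' block $\{0,a,-a,2a\}$ as a genuine alternative: the translation trick is exactly what converts cyclicity of the Sylow $2$-subgroup into the conclusion $2a=h_0$. In carrying this out I would also verify that $3a\neq0$ (valid since $3\nmid v$, which holds because an $\SQS(v)$ exists), so that $\{0,a,-a,2a\}$ is a genuine quadruple and the pairing computation is legitimate.
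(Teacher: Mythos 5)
Your proof is correct, and the opening reduction (note $\cQ_2=\cQ_3=\emptyset$ because $\Omega_1(A)=\langle h_0\rangle$, then show $\{0,a,-a,h_0\}\in\cB$ for each $a\in A\setminus\Omega_1(A)$ by examining the unique block $B'$ on $T=\{0,a,-a\}$) is exactly the paper's. Where you diverge is in pinning down the fourth point of $B'$. The paper does this in one stroke: since $-T=T$ and $-B'\in\cB$, uniqueness of the block on $T$ forces $B'=-B'$, so the fourth point $b$ satisfies $-b=b$ and must be the unique involution $h_0$. You instead invoke only the weaker fact that $B'$ is symmetric, pass through the classification of Lemma~\ref{lem:symb} (with $\cQ'''=\emptyset$), and then must separately dispose of the $\cQ'$ possibility $B'=\{0,a,-a,\pm 2a\}$ via the translation argument $B'+a=B'$, which forces $\langle a\rangle\subseteq B'$ and hence $4a=0$, $2a=h_0$. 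Both routes are sound; your translation trick is a nice self-contained way to kill the arithmetic-progression block, but the paper's observation that $B'$ is fixed (not merely stabilized up to translation) by $\sigma$ makes the case analysis unnecessary and collapses the whole identification to the single line $-b=b$. Your appeals to Lemma~\ref{lem:ps1} in the $\cQ''$ case and to $3\nmid v$ are legitimate (the existence of an $\SQS(v)$ guarantees $v\equiv 2$ or $4\pmod 6$), so no gap remains.
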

\begin{proof}
Observe $\cQ_2=\cQ_3=\emptyset$. Thus
it suffices to show $\{0,a,-a,h_0\}\in\cB$ for any
$a\in A\setminus \Omega_1(A)$. 
Let $B\in\cB$ be the unique block containing $\{0,a,-a\}$.
Then $B=\{0,a,-a,b\}$ for some $b\in A$. 
Since $\{0,a,-a\}\subset B\cap (-B)$ and $-B\in\cB$,
we must have $B=-B$, which implies $b$ has order $2$.
Since the Sylow $2$-subgroup of $A$ is cyclic, $h_0$ is
the unique element of order $2$, hence $b=h_0$.
\end{proof}

\begin{thm}\label{thm:cycS2}
Let $A$ be an abelian group of order $v\equiv2$ or $4\pmod6$
such that the Sylow $2$-subgroup of $A$ is cyclic. Then
there exists an $A$-reversible $\SQS(v)$ if and
only if the K\"ohler graph of $A$ has a $1$-factor.
\end{thm}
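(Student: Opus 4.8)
The plan is to read off both implications from Theorem~\ref{thm:Kohler}, using Lemma~\ref{lem:cycS2} to supply its hypothesis in the harder direction. Theorem~\ref{thm:Kohler} already characterizes those $A$-reversible $\SQS(v)$ whose block set \emph{contains} $\cB_0$ as exactly the sets of the form $\cB_0\cup\{B\mid\orb{\hA}{B}\in\cF\}$ for a $1$-factor $\cF$ of the K\"ohler graph. Hence the whole theorem reduces to showing that, under the present hypothesis, the condition $\cB_0\subset\cB$ is automatic.

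For the ``if'' direction I would assume the K\"ohler graph has a $1$-factor $\cF$, set $\cB=\cB_0\cup\{B\mid\orb{\hA}{B}\in\cF\}$, and invoke Theorem~\ref{thm:Kohler} directly to conclude that $(A,\cB)$ is an $A$-reversible $\SQS(v)$; this needs nothing beyond $v\equiv2$ or $4\pmod6$. For the ``only if'' direction I would take an arbitrary $A$-reversible $\SQS(v)$ given by $(A,\cB)$ and first establish $\cB_0\subset\cB$. This is precisely Lemma~\ref{lem:cycS2}: since a cyclic Sylow $2$-subgroup contains a unique involution $h_0$, the unique block through a triple $\{0,a,-a\}$ with $a\notin\Omega_1(A)$ must be fixed by $\sigma$ (as $\cB$ is $\hA$-invariant) and hence equals $\{0,a,-a,h_0\}$; together with $\cQ_2=\cQ_3=\emptyset$ this yields $\cB_0\subset\cB$. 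Once this inclusion is in hand, Theorem~\ref{thm:Kohler} immediately produces a $1$-factor of the K\"ohler graph.

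The main obstacle is exactly the inclusion $\cB_0\subset\cB$ in the ``only if'' direction, and it is the only place where the cyclic Sylow $2$-subgroup hypothesis is genuinely used. For a general abelian group the inclusion can fail, as Remark~\ref{rem:Kohler} shows for $A=\Z_2\oplus\Z_2\oplus\Z_5$, where an $A$-reversible $\SQS(20)$ exists with $\cB_0\not\subset\cB$ and whose Sylow $2$-subgroup $\Z_2\oplus\Z_2$ is not cyclic. Thus the substance of the argument lies entirely in Lemma~\ref{lem:cycS2}; the remainder is a formal application of Theorem~\ref{thm:Kohler}.
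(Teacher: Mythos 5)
Your proposal is correct and follows exactly the paper's argument: the paper's proof is the one-line ``Immediate from Theorem~\ref{thm:Kohler} and Lemma~\ref{lem:cycS2},'' and you have simply unpacked that same reduction, correctly identifying the inclusion $\cB_0\subset\cB$ (via Lemma~\ref{lem:cycS2}) as the only place the cyclic Sylow $2$-subgroup hypothesis is used.
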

\begin{proof}
Immediate from Theorem~\ref{thm:Kohler} and Lemma~\ref{lem:cycS2}.
\end{proof}

\section{Main theorem}

The following theorem is an extension of a theorem of
\cite{WP}.

\begin{thm} \label{thm:Series}
Let $v\ge8$ be a positive integer.
The following statements are equivalent: 
\begin{enumerate}
\item There exists an $A$-reversible $\SQS(v)$ for any abelian group $A$ of
order $v$ whose Sylow $2$-subgroup is cyclic;
\item There exists an $A$-reversible $\SQS(v)$ for some
abelian group $A$ of order $v$ whose Sylow $2$-subgroup is cyclic;
\item There exists an $S$-cyclic $\SQS(v)$;
\item 
$v\equiv 0\pmod{2}$, $v\not\equiv0\pmod{3}$,
$v\not\equiv0\pmod{8}$, and 
there exists an $S$-cyclic $\SQS(2p)$
for any odd prime divisor $p$ of $v$.
\end{enumerate}
\end{thm}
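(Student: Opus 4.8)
The plan is to prove the cycle of implications (i)~$\Rightarrow$~(ii)~$\Rightarrow$~(iv)~$\Rightarrow$~(i) together with the equivalence (iii)~$\Leftrightarrow$~(iv). The implication (i)~$\Rightarrow$~(ii) is immediate, since $\Z_v$ is an abelian group of order $v$ whose Sylow $2$-subgroup is (trivially) cyclic. The equivalence (iii)~$\Leftrightarrow$~(iv) is Piotrowski's theorem \cite{WP}: an $S$-cyclic $\SQS(v)$ is the same thing as a $\Z_v$-reversible $\SQS(v)$, and the only discrepancy between the two formulations is the case $p=2$, where an $S$-cyclic $\SQS(4)$ exists trivially, so that ``for any odd prime divisor $p$'' may be replaced by ``for any prime divisor $p$'' (the hypothesis $v\ge8$ subsumes Piotrowski's $v\ge4$). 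Thus the substance lies in the implications (ii)~$\Rightarrow$~(iv) (necessity) and (iv)~$\Rightarrow$~(i) (sufficiency), both of which I would route through Theorem~\ref{thm:cycS2}, which translates, for a group with cyclic Sylow $2$-subgroup, the existence of an $A$-reversible $\SQS(v)$ into the existence of a $1$-factor of its K\"ohler graph $\cG$.

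For necessity, suppose some $A$ of order $v$ with cyclic Sylow $2$-subgroup carries an $A$-reversible $\SQS(v)$. The existence of an $\SQS(v)$ forces $v\equiv2$ or $4\pmod6$, giving the first two conditions of (iv). By Theorem~\ref{thm:cycS2}, $\cG$ has a $1$-factor; since a $1$-factor restricts to a $1$-factor on every connected component, no component of $\cG$ can be a single vertex. If $8\mid v$ then, the Sylow $2$-subgroup being cyclic, $\Z_8\le A$, and by Example~\ref{exam:Z7Z8} the K\"ohler graph of $\Z_8$ is a single vertex which, by Lemma~\ref{lem:2gen2}, is isomorphic to a component of $\cG$ — a contradiction; hence $8\nmid v$. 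Finally, for each odd prime divisor $p$ of $v$ we have $\Z_{2p}\le A$, so by Lemma~\ref{lem:2gen2} the K\"ohler graph of $\Z_{2p}$ is a union of components of $\cG$ and therefore also has a $1$-factor; as $2p\equiv2$ or $4\pmod6$ and $\Z_{2p}$ has cyclic Sylow $2$-subgroup, Theorem~\ref{thm:cycS2} yields an $S$-cyclic $\SQS(2p)$, completing (iv).

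For sufficiency, assume (iv) and let $A$ be an arbitrary abelian group of order $v$ with cyclic Sylow $2$-subgroup; here $v\equiv2$ or $4\pmod6$, so by Theorem~\ref{thm:cycS2} it suffices to exhibit a $1$-factor of $\cG$, for which it is enough to do so on each connected component separately. By Lemmas~\ref{lem:2gen} and \ref{lem:2gen2}, every component has a well-defined span $B=\langle a,b\rangle$ and is isomorphic to a component of the K\"ohler graph of $B$. If $B$ is not cyclic, Lemma~\ref{lem:3reg2ec} shows the component is $3$-regular and $2$-edge-connected, hence has a $1$-factor. If $B=\Z_d$ is cyclic, put $d'=d$ when $d$ is even and $d'=2d$ when $d$ is odd; in the odd case $\Z_{2d}\le A$, obtained by adding an element of order $d$ to one of order $2$ in the Sylow $2$-subgroup, so in either case $\Z_d\le\Z_{d'}\le A$ and the component is isomorphic to a component of $\cG(\Z_{d'})$ by Lemma~\ref{lem:2gen2} (note $d'\ge4$, since cyclic groups of order at most $2$ have no vertices). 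Since $d'\mid v$ is even with $3\nmid d'$ and $8\nmid d'$, we have $d'\equiv2$ or $4\pmod6$; and every odd prime dividing $d'$ divides $v$, so (iv) and Piotrowski's theorem \cite{WP} provide an $S$-cyclic $\SQS(d')$, whence $\cG(\Z_{d'})$ has a $1$-factor by Theorem~\ref{thm:cycS2}. Restricting this $1$-factor to our component settles the cyclic case, and assembling the component-wise $1$-factors gives a $1$-factor of $\cG$.

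The main obstacle is the sufficiency direction, and within it the components whose span is cyclic. The non-cyclic components are essentially free, thanks to Lemma~\ref{lem:3reg2ec} (which rests on Petersen's theorem), so the entire arithmetic content is concentrated in the cyclic components; the device of passing from an odd span $\Z_d$ to $\Z_{2d}\le A$ is what lets Piotrowski's cyclic theorem be applied uniformly, and it is the step most in need of care, since one must verify that $\Z_{2d}$ really embeds in $A$ and that the resulting order $d'$ meets all hypotheses of Piotrowski's theorem.
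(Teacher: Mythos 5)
Your proposal is correct and follows essentially the same route as the paper: both reduce (ii)$\Rightarrow$(iv) and (iv)$\Rightarrow$(i) to $1$-factors of K\"ohler graphs via Theorem~\ref{thm:cycS2}, use Lemma~\ref{lem:2gen2} to pass to components with span $\langle a,b\rangle$, invoke Lemma~\ref{lem:3reg2ec} for non-cyclic spans and Piotrowski's theorem (after adjoining an involution to an odd-order cyclic span) for cyclic ones, and rule out $8\mid v$ via the isolated vertex of the K\"ohler graph of $\Z_8$. The only difference is cosmetic: you treat the even-order cyclic span explicitly (via $d'=d$), a case the paper's text leaves implicit.
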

\begin{proof}
Clearly, (i) implies (ii) and (iii).
The equivalence of (iii) and (iv) is due to
\cite{WP}[Satz 14.1].
So it remains
to prove (ii)$\implies$(iv) and (iv)$\implies$(i).

Suppose (ii) holds. Let $A$ be an abelian group
of order $v$ whose Sylow $2$-subgroup is cyclic,
and suppose that there exists an $A$-reversible $\SQS(v)$.
This implies 
$v\equiv 0\pmod{2}$ and $v\not\equiv0\pmod{3}$.
By Theorem~\ref{thm:cycS2}, the K\"ohler graph of $A$ has
a $1$-factor. Since $A$ has a cyclic subgroup of order $2p$
for any odd prime divisor $p$ of $v$, Lemma~\ref{lem:2gen2}
implies that the K\"ohler graph of a cyclic group of order
$2p$ has a $1$-factor. 
By Theorem~\ref{thm:cycS2} again, there exists a
$\Z_{2p}$-reversible $\SQS(2p)$.
Also, as the K\"ohler graph of a cyclic
group of order $8$ has no $1$-factor, $A$ has no element of
order $8$. Since the Sylow $2$-subgroup of $A$ is cyclic,
this implies $v\not\equiv0\pmod{8}$. 
Therefore, (iv) holds.

Next we prove (iv)$\implies$(i).
Let $A$ be an arbitrary abelian group of order $v$
whose Sylow $2$-subgroup is cyclic.
In view of Theorem~\ref{thm:cycS2}, it suffices to show that
the K\"ohler graph of $A$ has a $1$-factor. 
Let $\orbt{a,b}$ be a vertex of the K\"ohler graph $\cG$
of $A$.
The connected component of $\cG$ containing $\orbt{a,b}$
is isomorphic to a connected component of the K\"ohler graph of 
$A'=\langle a,b\rangle$, by Lemma~\ref{lem:2gen2}. 

If $A'$ is not cyclic, then by Lemma~\ref{lem:3reg2ec},
the connected component of $\cG$ containing $\orbt{a,b}$
has a $1$-factor.

Suppose that $A'$ is cyclic.
If $A'$ has odd order, then there exists a cyclic
subgroup $\tilde{A}$
containing $A'$ with $|\tilde{A}:A'|=2$.
By the implication (iv)$\implies$(iii),
there exists an $S$-cyclic $\SQS(2|A'|)$.
It follows from
Theorem~\ref{thm:cycS2} that there exists
a $1$-factor in the K\"ohler graph of $\tilde{A}$.
By Lemma~\ref{lem:2gen2}, there exists 
a $1$-factor in the K\"ohler graph of $A'$,
and in particular, there exists a $1$-factor 
in the connected component of $\cG$ containing $\orbt{a,b}$.
\end{proof}

In~\cite{HS2},
it is shown that a $S$-cyclic $\SQS(2p)$ exists for
any prime number $p\equiv53$ or $77\pmod{120}$ with $p<500000$.
Applying Theorem~\ref{thm:Series} to this result shows that there exists an
$A$-reversible $\SQS(v)$
for any abelian group $A$ of order $v$ which is twice a
product of prime numbers $p$ with
$p\equiv53$ or $77\pmod{120}$
and $p<500000$. 
The interested reader is also referred to~\cite{OP2,MH} for
recent results on SQS with various automorphism groups.

\bibliographystyle{jstpip}
\bibliography{MunemasaSawa}

\end{document}